\tikzset{>=stealth}
\newtheorem{theorem}{Theorem}[section]
\newtheorem{definition}[theorem]{Definition}
\newtheorem{prop}[theorem]{Proposition}
\newtheorem{cor}[theorem]{Corollary}
\newtheorem{lemma}[theorem]{Lemma}
\newcommand{\N}{\mathbb{N}}
\begin{document}
\title{Pr\"{u}fer codes on vertex-colored rooted trees}
\author{R.~W.~R.~Darling\thanks{National Security Agency, USA} \and Grant Fickes\\
}
\date{\today}

\maketitle
\abstract{Pr\"{u}fer codes provide an encoding scheme for representing a vertex-labeled tree on $n$ vertices with a string of length $n-2$. Indeed, two labeled trees are isomorphic if and only if their Pr\"{u}fer codes are identical, and this supplies a proof of Cayley's Theorem. Motivated by a graph decomposition of freight networks into a corpus of vertex-colored rooted trees, we extend the notion of Pr\"{u}fer codes to that setting, i.e., trees without a unique labeling, by defining a canonical label for a vertex-colored rooted tree and incorporating vertex colors into our variation of the Pr\"{u}fer code. Given a pair of trees, we prove properties of the vertex-colored Pr\"{u}fer code (abbreviated VCPC) equivalent to (1) isomorphism between a pair of vertex-colored rooted trees, (2) the subtree relationship between vertex-colored rooted trees, and (3) when one vertex-colored rooted tree is isomorphic to a minor of another vertex-colored rooted tree. }

\textbf{Keywords}: Canonicalization, Isomorphism, Pr\"{u}fer Codes

\section{Introduction}

\subsection{Motivation}

Graph decompositions partition a graph into smaller pieces, enabling more effective local study. Various standard graph decomposition algorithms exist, which include breaking the graph into modules (\cite{Gal1967}), cycles (\cite{Veb1912}), Hamiltonian cycles (\cite{Luc1882} attributes the first construction to Walecki), and paths (\cite{RobSey1983}). More abstractly, $(G,\mathcal{F})$-designs (see \cite{ChuGra1981} for a survey) determine if the graph $G$ can be decomposed into components, each of which is contained in a specific family of graphs $\mathcal{F}$. The work of Awerbuch et al. (\cite{Awe1989}) introduced graph decompositions via a ruling set $R$ of vertices so that each component of the resulting decomposition contains at most one representative of $R$. Once the ruling set $R$ is chosen, there are many ways to construct such a decomposition. Techniques of \cite{Awe1989} strive to minimize the average diameter of the resulting components, working towards a parallelizable algorithm. Darling et al. (\cite{DarHarPhuPro2018}) view the multiway cut problem on an edge-weighted forest as a search for a maximum weight basis of a matroid. Instead of decomposing based on the diameter of the component, a maximum weight spanning forest is formed in which no two ruling vertices can co-occur in the same tree. Motivated by freight data (\cite{FreightData}) detailing the total weight and value of top products shipped between major American cities, we seek to push their analysis further. We wish to determine commonly occurring and infrequently occurring structure among the representatives of a ``large'' corpus of graphs. 

\subsection{Literature}

First introduced by Heinz Pr\"{u}fer (\cite{Pru1918}) in 1918 to prove Cayley's Formula (\cite{Cay1889}) for the number of labeled trees on $n$ vertices, Pr\"{u}fer codes provide a bijection between the set of vertex-labeled trees on $n$ vertices and the set $\{0,\dotsc,n-1\}^{n-2}$. Since then, variations on the original definition have been studied by Neville (\cite{Nev1953} - who actually presented three different labeled tree encodings that each fix a root), Moon (\cite{Moo1970} - who proposed rootless variations of Neville's schemes), Caminiti and Petreschi (\cite{CamPet2005}), and Deo and Micikevi\v{c}ius (\cite{DeoMic2002}). However, each of these schemes apply to labeled trees, requiring unique vertex labels from the set $\{0, \dotsc, n-1\}$ for a tree on $n$ vertices. 

There are two obvious problems that arise when applying typical coding schemes to vertex-colored trees, namely (1) how can the color of leaves be recovered and (2) if two vertices have the same color $c$, how can neighbors of the two vertices be distinguished? Pr\"{u}fer codes for edge-colored trees were introduced by Cho et al. (\cite{ChoKimSeoShi2004}) in counting the number of trees on $n$ vertices with $k$-colored edges, a result first obtained by Stanley (\cite{Sta1999}). The construction in \cite{ChoKimSeoShi2004} still assumes a vertex-labeling and creates an array of dimensions $2 \times (n-1)$ to encode a tree on $n$ vertices. The following provides a description of the construction. 
\begin{itemize}
\item First row:
\begin{itemize}
\item The standard Pr\"{u}fer code in the first $n-2$ entries
\item Complete one additional pruning step to populate the ($n-1$)st entry. 
\end{itemize}
\item Second row: The color of the edge removed at the appropriate pruning step. 
\end{itemize}
This encoding of an edge-colored tree motivates our application to vertex-colored trees introduced in Definition \ref{Def:ColoredPruferCodes}. 

\subsection{Isomorphism Problems}

The equality of Pr\"{u}fer (and related) codes for two labeled trees is equivalent to (labeled) isomorphism between them. Once computed, Pr\"{u}fer codes provide a ``fast'' isomorphism check. Since Pr\"{u}fer codes can be computed in $O(n)$ time (\cite{WanWanWu2009}), isomorphism between two labeled trees can be determined in linear time simply by comparing their respective Pr\"{u}fer codes. Other string encodings of trees have also been used to determine isomorphism for rooted trees. Typically, these strings use a sequence of ``('' and ``)'' (or some other type of open/closed brackets) to encode the tree structure. Furthermore, some definitions impose a canonical ordering on the vertices (see \cite{Bus1997} for an example) to standardize the representation. 

Implications of any of these encodings on the subtree isomorphism problem have received little attention. The work of Abboud et al. (\cite{AbbBacHanWilZam2015}) begins with a brief summarization of known algorithms to answer subtree isomorphism in specific settings. However, these do not cover the questions we consider here. The authors distinguish between the cases of subtree isomorphism for rooted and unrooted trees. In the setting of our study, we consider rooted trees, however, given two rooted trees $T$ and $T'$, when determining if one is isomorphic to a subtree of the other, we do not require the root of $T$ to be identified with the root of $T'$. Instead, for $T'$ to be isomorphic to a subtree $S$ of $T$, we merely require the parent-child relationships between vertices of $T'$ to agree with those of $S$, where these vertex-relationships in $S$ are inherited from $T$ (a seemingly unstudied blend of the standard notions of unrooted and rooted isomorphism). As an example, consider the two trees presented in the Figure \ref{Fig:IsomorphicSubtreeDef}. We consider the tree $T'$ on the left isomorphic to a subtree of $T$ (on the right), where the root of $T$ is colored blue, the subtree of $T$ (called $S$) realizing the isomorphism has edges colored red, and the roots of $T'$ and $S$ are colored green. 
\begin{figure}[H]
\centering
\begin{minipage}{0.35\textwidth}
\centering
\begin{tikzpicture}
\draw[thick,black] (-2,-2) -- (-1.5,-1) -- (-1,-2);
\draw[thick,black] (1.5,-2) -- (1.5,-1) -- (0,0);
\draw[thick,black] (-1.5,-1) -- (0,0) -- (0,-1);

\filldraw[green](0,0)circle(0.1);
\filldraw[black](-1.5,-1)circle(0.1);
\filldraw[black](0,-1)circle(0.1);
\filldraw[black](1.5,-1)circle(0.1);
\filldraw[black](-2,-2)circle(0.1);
\filldraw[black](-1,-2)circle(0.1);
\filldraw[black](1.5,-2)circle(0.1);

\node at (-2,0) {$T'$};
\end{tikzpicture}
\end{minipage}
\begin{minipage}{0.55\textwidth}
\centering
\begin{tikzpicture}
\draw[thick,red,dashed] (-2,-3) -- (-1.5,-2) -- (-1,-3);
\draw[thick,red,dashed] (2,-3) -- (1.5,-2) -- (0,-1);
\draw[thick,red,dashed] (-1.5,-2) -- (0,-1) -- (0,-2);
\draw[thick,black] (1.5,-3) -- (1.5,-2) -- (1,-3);

\draw[thick,black] (0,-1) -- (2,0) -- (4,-1) -- (5,-2);
\draw[thick,black] (4,-1) -- (3,-2);

\filldraw[green](0,-1)circle(0.1);
\filldraw[black](-1.5,-2)circle(0.1);
\filldraw[black](0,-2)circle(0.1);
\filldraw[black](1.5,-2)circle(0.1);
\filldraw[black](-2,-3)circle(0.1);
\filldraw[black](-1,-3)circle(0.1);
\filldraw[black](1.5,-3)circle(0.1);
\filldraw[black](1,-3)circle(0.1);
\filldraw[black](2,-3)circle(0.1);

\filldraw[blue](2,0)circle(0.1);
\filldraw[black](4,-1)circle(0.1);
\filldraw[black](3,-2)circle(0.1);
\filldraw[black](5,-2)circle(0.1);

\node at (-1.5,-1) {\textcolor{red}{$S$}};
\node at (4,0) {$T$};
\end{tikzpicture}
\end{minipage}
\caption{\textit{$T'$ is isomorphic to a subtree ($S$) of $T$ although the roots of $S$ and $T$ differ.}}\label{Fig:IsomorphicSubtreeDef}
\end{figure}
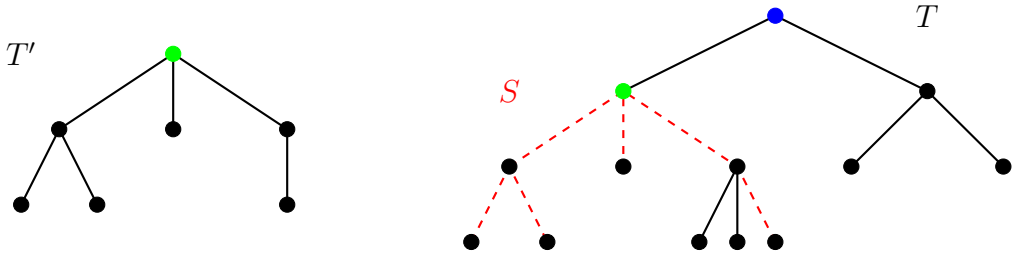

In the language of \cite{AbbBacHanWilZam2015}, the subtree isomorphism we consider is on \textit{ordered} trees, meaning the order of a vertex's children must be preserved by the isomorphism. The fastest known algorithm for this version of the subtree isomorphism problem (on unordered trees) has time complexity $O(n^{\omega+o(1)})$ (as shown in \cite{ShaTsu1999}), where $\omega \leq 2.371$ is the exponent of matrix multiplication, as recently improved by \cite{Wil2024}. We seek to improve upon that bound for vertex-colored arborescences. 

\subsection{Overview}

Despite the typical definition, we permit monochromatic edges in the ``vertex colorings'' considered throughout. To improve readability, we separate most formal definitions and proofs from the motivation and intuition for the study. In Section \ref{Sec:OverarchingIdeas} we motivate the extension of Pr\"{u}fer codes to the setting of vertex-colored arborescences, outline the definition, and describe ways to determine structural relationships between two arborescences based on relationships observed in their VCPCs. In Section \ref{Sec:Application} we apply the concepts presented earlier in Section \ref{Sec:OverarchingIdeas} to the matroid decomposition of the freight dataset considered in \cite{FreightData}, summarizing commonly occurring and rare structure present among those components. In Section \ref{Sec:FormalAlgs} we present formal algorithms and definitions surrounding VCPCs, while Section \ref{Sec:CompClassify} contains many of the results surrounding structural relationships between arborescences discernable from VCPCs. Finally, in Section \ref{Sec:FutureDirections} we provide some ideas for additional work. 

\section{Overarching Ideas}\label{Sec:OverarchingIdeas}

Throughout we assume all graphs are simple and finite unless otherwise specified. Frequently we define $T$ to be an arborescence; recall that an arborescence is a tree in which all edges are directed ``away from'' the root. Given $v \in V(T)$, define $T[v]$ to be the subtree of $T$ containing $v$ and all descendants of $v$. In other words, $T[v]$ is the out-component of $v$ in $T$, i.e., the vertices reachable by a path originating at $v$. Furthermore, the depth of $v \in V(T)$ is the distance between $v$ and the root of $T$. 

We make use of pseudocode to provide presentations of various algorithms. We regularly use arrays and dictionaries. Use $[]$ to denote the empty array. Otherwise, we denote an array as comma-separated values within square brackets. Use $\{\}$ to denote the empty dictionary. Otherwise, we denote a dictionary as comma-separated pairs within curly braces, where pairs are separated by ``:''.

\subsection{Motivation}

While the graph isomorphism problem is quasipolynomial time (\cite{bab}), this problem can be resolved in polynomial time for labeled trees. In this setting, Pr\"{u}fer codes provide a quick check. Given tree $T$ on $n$ vertices with distinct labels zero through $n-1$, the tree is converted to an array of length $n-2$ on the alphabet $\{0,\dotsc,n-1\}$ by identifying the leaf with minimum label, recording the label of its unique neighbor, pruning the leaf (i.e., removing the leaf and its incident edge from the tree), and terminating when only one edge remains. The following figure provides an example, where the Pr\"{u}fer code is updated at each step under the image of the remaining tree. 
\begin{figure}[H]
\centering
\begin{minipage}{0.22\textwidth}
\centering
\begin{tikzpicture}
\draw[thick,black] (0,2) -- (0,1);
\draw[thick,black] (0,0) -- (0,1);
\draw[thick,black] (0,0) -- (1,1);
\draw[thick,black] (0,0) -- (-1,1);
\draw[thick,black] (0,0) -- (-1,-1);
\draw[thick,black] (0,0) -- (1,-1);

\filldraw[black](0,2)circle(0.1);
\filldraw[black](0,1)circle(0.1);
\filldraw[black](0,0)circle(0.1);
\filldraw[black](1,1)circle(0.1);
\filldraw[black](1,-1)circle(0.1);
\filldraw[black](-1,-1)circle(0.1);
\filldraw[black](-1,1)circle(0.1);

\node at (0,-0.35) {4};
\node at (1,-0.65) {2};
\node at (1,0.65) {5};
\node at (-1,-0.65) {0};
\node at (-1,0.65) {6};
\node at (0.35,1) {1};
\node at (0,2.35) {3};

\node at (0,-2) {[]};
\end{tikzpicture}
\end{minipage}
\begin{minipage}{0.22\textwidth}
\centering
\begin{tikzpicture}
\draw[thick,black] (0,2) -- (0,1);
\draw[thick,black] (0,0) -- (0,1);
\draw[thick,black] (0,0) -- (1,1);
\draw[thick,black] (0,0) -- (-1,1);
\draw[thick,black] (0,0) -- (1,-1);

\filldraw[black](0,2)circle(0.1);
\filldraw[black](0,1)circle(0.1);
\filldraw[black](0,0)circle(0.1);
\filldraw[black](1,1)circle(0.1);
\filldraw[black](1,-1)circle(0.1);
\filldraw[black](-1,1)circle(0.1);

\node at (0,-0.35) {4};
\node at (1,-0.65) {2};
\node at (1,0.65) {5};
\node at (-1,0.65) {6};
\node at (0.35,1) {1};
\node at (0,2.35) {3};

\node at (0,-2) {[4]};
\end{tikzpicture}
\end{minipage}
\begin{minipage}{0.22\textwidth}
\centering
\begin{tikzpicture}
\draw[thick,black] (0,2) -- (0,1);
\draw[thick,black] (0,0) -- (0,1);
\draw[thick,black] (0,0) -- (1,1);
\draw[thick,black] (0,0) -- (-1,1);

\filldraw[black](0,2)circle(0.1);
\filldraw[black](0,1)circle(0.1);
\filldraw[black](0,0)circle(0.1);
\filldraw[black](1,1)circle(0.1);
\filldraw[black](-1,1)circle(0.1);

\node at (0,-0.35) {4};
\node at (1,0.65) {5};
\node at (-1,0.65) {6};
\node at (0.35,1) {1};
\node at (0,2.35) {3};

\node at (0,-2) {[4, 4]};
\end{tikzpicture}
\end{minipage}
\begin{minipage}{0.22\textwidth}
\centering
\begin{tikzpicture}
\draw[thick,black] (0,0) -- (0,1);
\draw[thick,black] (0,0) -- (1,1);
\draw[thick,black] (0,0) -- (-1,1);

\filldraw[black](0,1)circle(0.1);
\filldraw[black](0,0)circle(0.1);
\filldraw[black](1,1)circle(0.1);
\filldraw[black](-1,1)circle(0.1);

\node at (0,-0.35) {4};
\node at (1,0.65) {5};
\node at (-1,0.65) {6};
\node at (0.35,1) {1};
\node at (0,2.35) {\textcolor{white}{3}};

\node at (0,-2) {[4, 4, 1]};
\end{tikzpicture}
\end{minipage}
\begin{minipage}{0.22\textwidth}
\centering
\begin{tikzpicture}
\draw[thick,black] (0,0) -- (1,1);
\draw[thick,black] (0,0) -- (-1,1);

\filldraw[black](0,0)circle(0.1);
\filldraw[black](1,1)circle(0.1);
\filldraw[black](-1,1)circle(0.1);

\node at (0,-0.35) {4};
\node at (1,0.65) {5};
\node at (-1,0.65) {6};
\node at (0,2.35) {\textcolor{white}{3}};

\node at (0,-2) {[4, 4, 1, 4]};
\end{tikzpicture}
\end{minipage}
\begin{minipage}{0.22\textwidth}
\centering
\begin{tikzpicture}
\draw[thick,black] (0,0) -- (-1,1);

\filldraw[black](0,0)circle(0.1);
\filldraw[black](-1,1)circle(0.1);

\node at (0,-0.35) {4};
\node at (-1,0.65) {6};
\node at (0,2.35) {\textcolor{white}{3}};

\node at (0,-2) {[4, 4, 1, 4, 4]};
\end{tikzpicture}
\end{minipage}
\caption{Pr\"{u}fer Code example}\label{Fig:Prufer}
\end{figure}
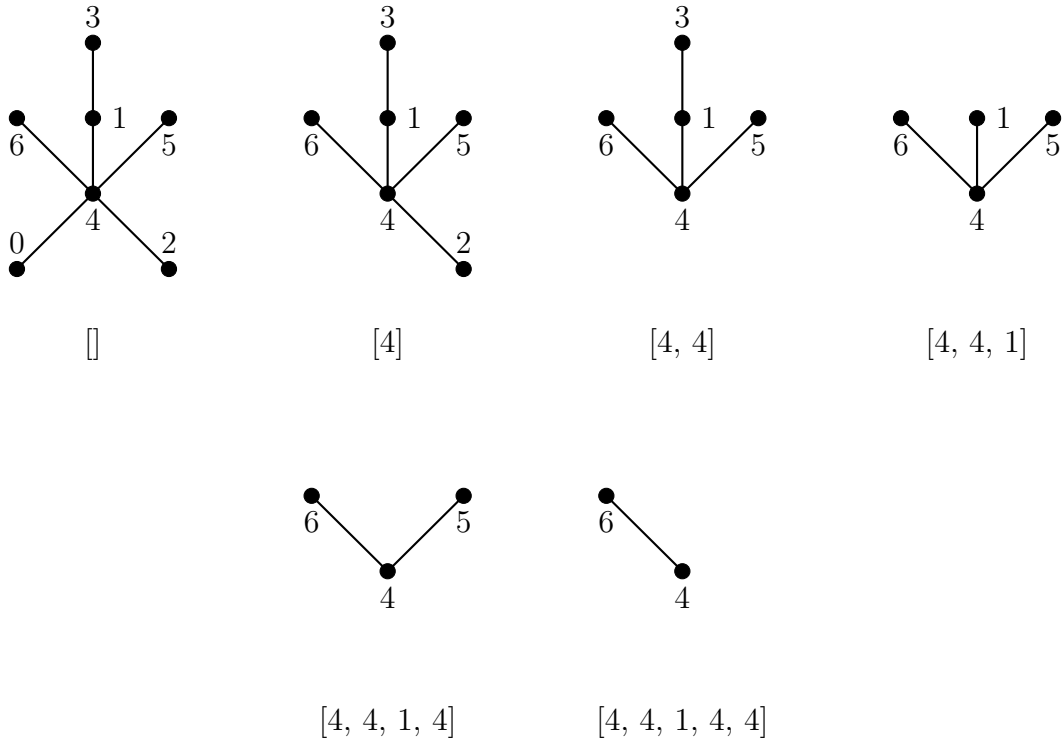

Now we provide a recursive definition for Pr\"{u}fer codes. Suppose $T = (V(T), E(T))$ is a tree on $n$ vertices, and $\phi : V(T) \to\{0,\dotsc,n-1\}$ is the accompanying vertex labeling. 
The optional argument $pc$ is the array which ultimately contains the Pr\"{u}fer code. 

\begin{algorithm}[H]
\caption{\texttt{Pr\"{u}fer}$(T, \phi, pc=[]))$ - Building the Pr\"{u}fer code}
\begin{algorithmic}
\IF{$|V(T)|$ is $2$}
\RETURN pc
\ELSE
\STATE $\ell \gets$ the leaf of $T$ which minimizes $\phi$
\STATE $v \gets$ the unique neighbor of $\ell$ in $T$
\STATE append $\phi(v)$ to $pc$
\STATE remove vertex $\ell$ from T
\RETURN \texttt{Pr\"{u}fer}$(T, \phi, pc)$
\ENDIF
\end{algorithmic}
\end{algorithm}

Turning a Pr\"{u}fer code back into a labeled tree is also straightforward. Given code $pc = [4,4,1,4,4]$, of length $5$, the corresponding tree $T$ must have seven vertices, since the code for a tree on $n$ vertices has length $n-2$. So define $V(T) = \{0,\dotsc, 6\}$. Initialize $E(T) = \emptyset$. Let $L$ be the set given by $L = \{0,2,3,5,6\}$, which contains vertices of $V(T)$ except those contained in $pc$. Loop through the entries of $pc$. At each iteration, add $(pc[0], \min(L))$ to $E(T)$, update $v = pc[0]$, remove the minimum element of $L$, and remove the zeroth element of $pc$. If $v$ does not occur in $pc$ (note that $v$ was the zeroth element of $pc$ but this entry was removed with the update), include it in $L$. Iterate until $pc$ is empty, in which case $|L|=2$. Include the edge connecting the two vertices of $L$ as the final edge in $E(T)$. The following figure illustrates an example of this process, where $pc$ and $L$ are updated at each step under the image of the current forest. 

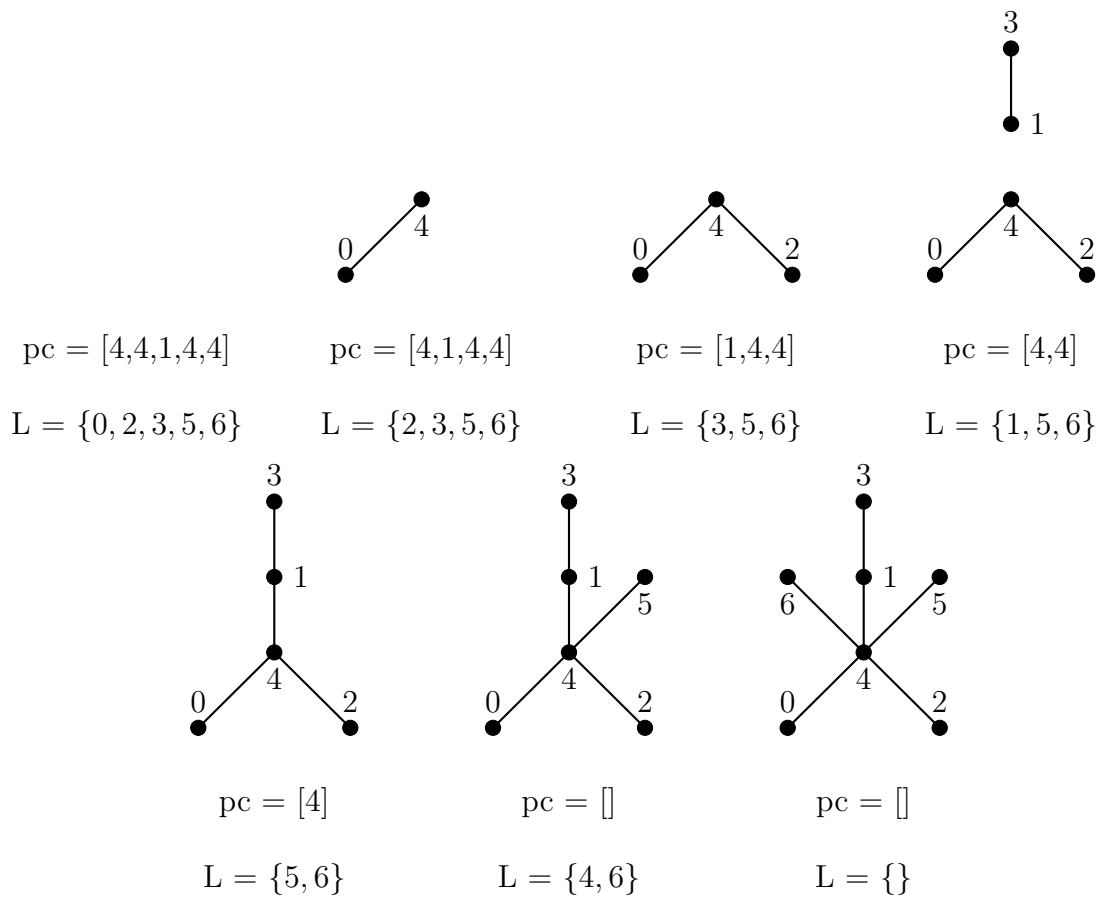
\begin{figure}[H]
\centering
\begin{minipage}{0.22\textwidth}
\centering
\begin{tikzpicture}
\draw[thick,white] (0,2) -- (0,1);
\draw[thick,white] (0,0) -- (0,1);
\draw[thick,white] (0,0) -- (1,1);
\draw[thick,white] (0,0) -- (-1,1);
\draw[thick,white] (0,0) -- (-1,-1);
\draw[thick,white] (0,0) -- (1,-1);

\filldraw[white](0,2)circle(0.1);
\filldraw[white](0,1)circle(0.1);
\filldraw[white](0,0)circle(0.1);
\filldraw[white](1,1)circle(0.1);
\filldraw[white](1,-1)circle(0.1);
\filldraw[white](-1,-1)circle(0.1);
\filldraw[white](-1,1)circle(0.1);

\node at (0,-0.35) {\textcolor{white}{4}};
\node at (1,-0.65) {\textcolor{white}{2}};
\node at (1,0.65) {\textcolor{white}{5}};
\node at (-1,-0.65) {\textcolor{white}{0}};
\node at (-1,0.65) {\textcolor{white}{6}};
\node at (0.35,1) {\textcolor{white}{1}};
\node at (0,2.35) {\textcolor{white}{3}};

\node at (0,-2) {pc = [4,4,1,4,4]};
\node at (0,-3) {L = $\{0,2,3,5,6\}$};
\end{tikzpicture}
\end{minipage}
\begin{minipage}{0.22\textwidth}
\centering
\begin{tikzpicture}
\draw[thick,white] (0,2) -- (0,1);
\draw[thick,white] (0,0) -- (0,1);
\draw[thick,white] (0,0) -- (1,1);
\draw[thick,white] (0,0) -- (-1,1);
\draw[thick,black] (0,0) -- (-1,-1);
\draw[thick,white] (0,0) -- (1,-1);

\filldraw[white](0,2)circle(0.1);
\filldraw[white](0,1)circle(0.1);
\filldraw[black](0,0)circle(0.1);
\filldraw[white](1,1)circle(0.1);
\filldraw[white](1,-1)circle(0.1);
\filldraw[black](-1,-1)circle(0.1);
\filldraw[white](-1,1)circle(0.1);

\node at (0,-0.35) {\textcolor{black}{4}};
\node at (1,-0.65) {\textcolor{white}{2}};
\node at (1,0.65) {\textcolor{white}{5}};
\node at (-1,-0.65) {\textcolor{black}{0}};
\node at (-1,0.65) {\textcolor{white}{6}};
\node at (0.35,1) {\textcolor{white}{1}};
\node at (0,2.35) {\textcolor{white}{3}};

\node at (0,-2) {pc = [4,1,4,4]};
\node at (0,-3) {L = $\{2,3,5,6\}$};
\end{tikzpicture}
\end{minipage}
\begin{minipage}{0.22\textwidth}
\centering
\begin{tikzpicture}
\draw[thick,white] (0,2) -- (0,1);
\draw[thick,white] (0,0) -- (0,1);
\draw[thick,white] (0,0) -- (1,1);
\draw[thick,white] (0,0) -- (-1,1);
\draw[thick,black] (0,0) -- (-1,-1);
\draw[thick,black] (0,0) -- (1,-1);

\filldraw[white](0,2)circle(0.1);
\filldraw[white](0,1)circle(0.1);
\filldraw[black](0,0)circle(0.1);
\filldraw[white](1,1)circle(0.1);
\filldraw[black](1,-1)circle(0.1);
\filldraw[black](-1,-1)circle(0.1);
\filldraw[white](-1,1)circle(0.1);

\node at (0,-0.35) {\textcolor{black}{4}};
\node at (1,-0.65) {\textcolor{black}{2}};
\node at (1,0.65) {\textcolor{white}{5}};
\node at (-1,-0.65) {\textcolor{black}{0}};
\node at (-1,0.65) {\textcolor{white}{6}};
\node at (0.35,1) {\textcolor{white}{1}};
\node at (0,2.35) {\textcolor{white}{3}};

\node at (0,-2) {pc = [1,4,4]};
\node at (0,-3) {L = $\{3,5,6\}$};
\end{tikzpicture}
\end{minipage}
\begin{minipage}{0.22\textwidth}
\centering
\begin{tikzpicture}
\draw[thick,black] (0,2) -- (0,1);
\draw[thick,white] (0,0) -- (0,1);
\draw[thick,white] (0,0) -- (1,1);
\draw[thick,white] (0,0) -- (-1,1);
\draw[thick,black] (0,0) -- (-1,-1);
\draw[thick,black] (0,0) -- (1,-1);

\filldraw[black](0,2)circle(0.1);
\filldraw[black](0,1)circle(0.1);
\filldraw[black](0,0)circle(0.1);
\filldraw[white](1,1)circle(0.1);
\filldraw[black](1,-1)circle(0.1);
\filldraw[black](-1,-1)circle(0.1);
\filldraw[white](-1,1)circle(0.1);

\node at (0,-0.35) {\textcolor{black}{4}};
\node at (1,-0.65) {\textcolor{black}{2}};
\node at (1,0.65) {\textcolor{white}{5}};
\node at (-1,-0.65) {\textcolor{black}{0}};
\node at (-1,0.65) {\textcolor{white}{6}};
\node at (0.35,1) {\textcolor{black}{1}};
\node at (0,2.35) {\textcolor{black}{3}};

\node at (0,-2) {pc = [4,4]};
\node at (0,-3) {L = $\{1,5,6\}$};
\end{tikzpicture}
\end{minipage}
\begin{minipage}{0.22\textwidth}
\centering
\begin{tikzpicture}
\draw[thick,black] (0,2) -- (0,1);
\draw[thick,black] (0,0) -- (0,1);
\draw[thick,white] (0,0) -- (1,1);
\draw[thick,white] (0,0) -- (-1,1);
\draw[thick,black] (0,0) -- (-1,-1);
\draw[thick,black] (0,0) -- (1,-1);

\filldraw[black](0,2)circle(0.1);
\filldraw[black](0,1)circle(0.1);
\filldraw[black](0,0)circle(0.1);
\filldraw[white](1,1)circle(0.1);
\filldraw[black](1,-1)circle(0.1);
\filldraw[black](-1,-1)circle(0.1);
\filldraw[white](-1,1)circle(0.1);

\node at (0,-0.35) {\textcolor{black}{4}};
\node at (1,-0.65) {\textcolor{black}{2}};
\node at (1,0.65) {\textcolor{white}{5}};
\node at (-1,-0.65) {\textcolor{black}{0}};
\node at (-1,0.65) {\textcolor{white}{6}};
\node at (0.35,1) {\textcolor{black}{1}};
\node at (0,2.35) {\textcolor{black}{3}};

\node at (0,-2) {pc = [4]};
\node at (0,-3) {L = $\{5,6\}$};
\end{tikzpicture}
\end{minipage}
\begin{minipage}{0.22\textwidth}
\centering
\begin{tikzpicture}
\draw[thick,black] (0,2) -- (0,1);
\draw[thick,black] (0,0) -- (0,1);
\draw[thick,black] (0,0) -- (1,1);
\draw[thick,white] (0,0) -- (-1,1);
\draw[thick,black] (0,0) -- (-1,-1);
\draw[thick,black] (0,0) -- (1,-1);

\filldraw[black](0,2)circle(0.1);
\filldraw[black](0,1)circle(0.1);
\filldraw[black](0,0)circle(0.1);
\filldraw[black](1,1)circle(0.1);
\filldraw[black](1,-1)circle(0.1);
\filldraw[black](-1,-1)circle(0.1);
\filldraw[white](-1,1)circle(0.1);

\node at (0,-0.35) {\textcolor{black}{4}};
\node at (1,-0.65) {\textcolor{black}{2}};
\node at (1,0.65) {\textcolor{black}{5}};
\node at (-1,-0.65) {\textcolor{black}{0}};
\node at (-1,0.65) {\textcolor{white}{6}};
\node at (0.35,1) {\textcolor{black}{1}};
\node at (0,2.35) {\textcolor{black}{3}};

\node at (0,-2) {pc = []};
\node at (0,-3) {L = $\{4,6\}$};
\end{tikzpicture}
\end{minipage}
\begin{minipage}{0.22\textwidth}
\centering
\begin{tikzpicture}
\draw[thick,black] (0,2) -- (0,1);
\draw[thick,black] (0,0) -- (0,1);
\draw[thick,black] (0,0) -- (1,1);
\draw[thick,black] (0,0) -- (-1,1);
\draw[thick,black] (0,0) -- (-1,-1);
\draw[thick,black] (0,0) -- (1,-1);

\filldraw[black](0,2)circle(0.1);
\filldraw[black](0,1)circle(0.1);
\filldraw[black](0,0)circle(0.1);
\filldraw[black](1,1)circle(0.1);
\filldraw[black](1,-1)circle(0.1);
\filldraw[black](-1,-1)circle(0.1);
\filldraw[black](-1,1)circle(0.1);

\node at (0,-0.35) {\textcolor{black}{4}};
\node at (1,-0.65) {\textcolor{black}{2}};
\node at (1,0.65) {\textcolor{black}{5}};
\node at (-1,-0.65) {\textcolor{black}{0}};
\node at (-1,0.65) {\textcolor{black}{6}};
\node at (0.35,1) {\textcolor{black}{1}};
\node at (0,2.35) {\textcolor{black}{3}};

\node at (0,-2) {pc = []};
\node at (0,-3) {L = \{\}};
\end{tikzpicture}
\end{minipage}
\caption{Building a tree from its Pr\"{u}fer code}\label{Fig:InversePrufer}
\end{figure}

In this way, Pr\"{u}fer codes provide a bijection from labeled trees on $n$ vertices to the set of tuples $\{0,\dotsc, n-1\}^{n-2}$. Since a bijection is by definition an injection, two labeled trees $T$ and $T'$ are isomorphic as labeled trees if and only if they have the same Pr\"{u}fer codes. We aim to extend this concept to the setting of unlabeled, vertex-colored arborescences (i.e., a directed tree where all edges are directed away from some ``root'') so that two vertex-colored arborescences $T$ and $T'$ are isomorphic (as vertex-colored directed trees) if and only if their VCPCs are identical. We have three problems to address, namely (1) how can a vertex labeling be applied to a vertex-colored arborescence to allow for a typical Pr\"{u}fer encoding? (2) How can the Pr\"{u}fer code be adapted to contain vertex color information? (3) How can the Pr\"{u}fer code be adapted to a directed tree?

\subsection{VCPCs}

These three problems are listed in descending order of complexity. Thus, we handle the questions in reverse order. To accommodate directed edges, define a leaf of arborescence $T$ to be a vertex with out-degree zero and in-degree at most one. In other words, these are exactly the leaves of the undirected tree underlying $T$. Thus, this definition ensures that all (nonempty) arborescences have at least one leaf. 

Next, we address the encoding of vertex colors. Assume a vertex labeling has already been determined. Then, similar to the work of \cite{ChoKimSeoShi2004} for edge-colored Pr\"{u}fer codes, vertex colors are recorded in the Pr\"{u}fer code by adding a second row containing the color of the vertex pruned. To store the color of every vertex, we extend the code to have length $n$. The first $n-2$ columns will be built by recording the typical Pr\"{u}fer code in the first row and the color of the vertex pruned in the second row. Once only one edge remains, perform another pruning step, populating the $(n-1)$-st column of the code. Then one vertex will remain, so we remove the last vertex, insert $\emptyset$ in the first row of the Pr\"{u}fer code and the color of the last vertex in the second row. We illustrate this in the following example. The VCPC is updated underneath the remaining tree after each iteration. 

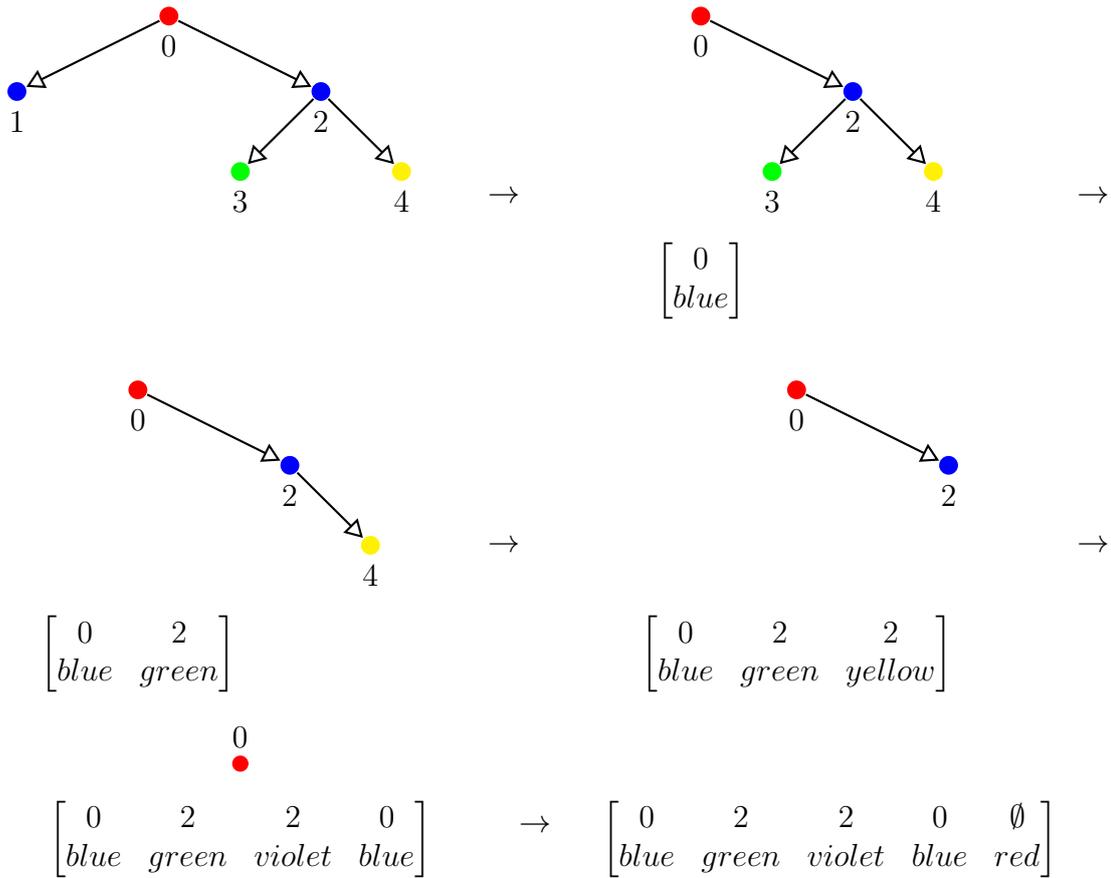
\begin{figure}[H]
\centering
\begin{minipage}{0.4\textwidth}
\centering
\begin{tikzpicture}[thick, -open triangle 60, main/.style = {node distance = 15mm, fill, circle, inner sep = 2.5pt}, label/.style = {node distance = 4mm}]
\node[main,red] (1) {};
\node[main,blue] (2) at (-2,-1) {};
\node[main,blue] (3) at (2,-1) {};
\node[main,green] (6) [below left of=3] {};
\node[main,yellow] (7) [below right of=3] {};

\node at (0,-3.5) {\textcolor{white}{$\begin{bmatrix} 0 \\ blue \end{bmatrix}$}};
\node[label] [below of=1] {0};
\node[label] [below of=2] {1};
\node[label] [below of=3] {2};
\node[label] [below of=6] {3};
\node[label] [below of=7] {4};

\draw (1) -- (2);
\draw (1) -- (3);
\draw (3) -- (6);
\draw (3) -- (7);
\end{tikzpicture}
\end{minipage}
\begin{minipage}{0.04\textwidth}
$$ \rightarrow$$
\end{minipage}
\begin{minipage}{0.4\textwidth}
\centering
\begin{tikzpicture}[thick, -open triangle 60, main/.style = {node distance = 15mm, fill, circle, inner sep = 2.5pt}, label/.style = {node distance = 4mm}]
\node[main,red] (1) {};
\node[main,blue] (3) at (2,-1) {};
\node[main,green] (6) [below left of=3] {};
\node[main,yellow] (7) [below right of=3] {};

\node at (0,-3.5) {$\begin{bmatrix} 0 \\ blue \end{bmatrix}$};
\node[label] [below of=1] {0};
\node[label] [below of=3] {2};
\node[label] [below of=6] {3};
\node[label] [below of=7] {4};

\draw (1) -- (3);
\draw (3) -- (6);
\draw (3) -- (7);
\end{tikzpicture}
\end{minipage}
\begin{minipage}{0.04\textwidth}
$$ \rightarrow$$
\end{minipage}
\begin{minipage}{0.4\textwidth}
\centering
\begin{tikzpicture}[thick, -open triangle 60, main/.style = {node distance = 15mm, fill, circle, inner sep = 2.5pt}, label/.style = {node distance = 4mm}]
\node[main,red] (1) {};
\node[main,blue] (3) at (2,-1) {};
\node[main,yellow] (7) [below right of=3] {};

\node at (0,-3.5) {$\begin{bmatrix} 0 & 2\\ blue & green \end{bmatrix}$};
\node[label] [below of=1] {0};
\node[label] [below of=3] {2};
\node[label] [below of=7] {4};

\node at (0,0.5) {\textcolor{white}{0}};

\draw (1) -- (3);
\draw (3) -- (7);
\end{tikzpicture}
\end{minipage}
\begin{minipage}{0.04\textwidth}
$$ \rightarrow$$
\end{minipage}
\begin{minipage}{0.4\textwidth}
\centering
\begin{tikzpicture}[thick, -open triangle 60, main/.style = {node distance = 15mm, fill, circle, inner sep = 2.5pt}, label/.style = {node distance = 4mm}]
\node[main,red] (1) {};
\node[main,blue] (3) at (2,-1) {};

\node at (0,-3.5) {$\begin{bmatrix} 0 & 2 & 2\\ blue & green & yellow \end{bmatrix}$};
\node[label] [below of=1] {0};
\node[label] [below of=3] {2};
\node at (0,0.5) {\textcolor{white}{0}};

\draw (1) -- (3);
\end{tikzpicture}
\end{minipage}
\begin{minipage}{0.04\textwidth}
$$ \rightarrow$$
\end{minipage}
\begin{minipage}{0.4\textwidth}
\centering
\begin{tikzpicture}
\filldraw[red](0,0)circle(0.1);

\node at (0,0.5) {\textcolor{white}{0}};
\node at (0,0.35) {0};

\node at (0,-1) {$\begin{bmatrix} 0 & 2 & 2 & 0\\ blue & green & violet & blue \end{bmatrix}$};
\end{tikzpicture}
\end{minipage}
\begin{minipage}{0.04\textwidth}
$$ \rightarrow$$
\end{minipage}
\begin{minipage}{0.4\textwidth}
\centering
\begin{tikzpicture}
\filldraw[white](0,0)circle(0.1);

\node at (0,0.5) {\textcolor{white}{0}};

\node at (0,-1) {$\begin{bmatrix} 0 & 2 & 2 & 0 &\emptyset\\ blue & green & violet & blue & red\end{bmatrix}$};
\end{tikzpicture}
\end{minipage}
\caption{Building the VCPC. Recall that a vertex is eligible for pruning only if it has out-degree zero and in-degree at most one. }\label{Fig:PruferExample}
\end{figure}

It only remains to address the first question by defining the vertex labeling. Notice that in the tree from the previous example, changing the labeling slightly will change the resulting Pr\"{u}fer code. Thus, used directly, the VCPC does not provide a valid canonicalization for vertex-colored arborescences. Here are a couple of examples. 
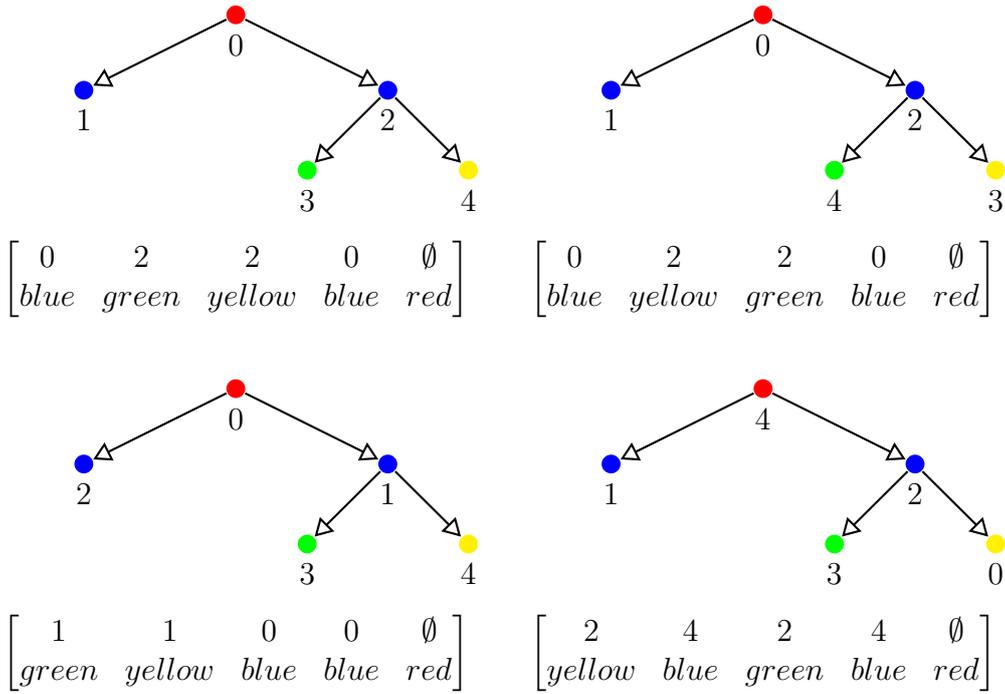
\begin{figure}[H]
\centering
\begin{minipage}{0.4\textwidth}
\centering
\begin{tikzpicture}[thick, -open triangle 60, main/.style = {node distance = 15mm, fill, circle, inner sep = 2.5pt}, label/.style = {node distance = 4mm}]
\node[main,red] (1) {};
\node[main,blue] (2) at (-2,-1) {};
\node[main,blue] (3) at (2,-1) {};
\node[main,green] (6) [below left of=3] {};
\node[main,yellow] (7) [below right of=3] {};

\node at (0,-3.5) {$\begin{bmatrix} 0 & 2 & 2 & 0 &\emptyset\\ blue & green & yellow & blue & red\end{bmatrix}$};
\node[label] [below of=1] {0};
\node[label] [below of=2] {1};
\node[label] [below of=3] {2};
\node[label] [below of=6] {3};
\node[label] [below of=7] {4};

\draw (1) -- (2);
\draw (1) -- (3);
\draw (3) -- (6);
\draw (3) -- (7);
\end{tikzpicture}
\end{minipage}
\begin{minipage}{0.4\textwidth}
\centering
\begin{tikzpicture}[thick, -open triangle 60, main/.style = {node distance = 15mm, fill, circle, inner sep = 2.5pt}, label/.style = {node distance = 4mm}]
\node[main,red] (1) {};
\node[main,blue] (2) at (-2,-1) {};
\node[main,blue] (3) at (2,-1) {};
\node[main,green] (6) [below left of=3] {};
\node[main,yellow] (7) [below right of=3] {};

\node at (0,-3.5) {$\begin{bmatrix} 0 & 2 & 2 & 0 &\emptyset\\ blue & yellow & green & blue & red\end{bmatrix}$};
\node[label] [below of=1] {0};
\node[label] [below of=2] {1};
\node[label] [below of=3] {2};
\node[label] [below of=6] {4};
\node[label] [below of=7] {3};

\draw (1) -- (2);
\draw (1) -- (3);
\draw (3) -- (6);
\draw (3) -- (7);
\end{tikzpicture}
\end{minipage}
\begin{minipage}{0.4\textwidth}
\centering
\begin{tikzpicture}[thick, -open triangle 60, main/.style = {node distance = 15mm, fill, circle, inner sep = 2.5pt}, label/.style = {node distance = 4mm}]
\node[main,red] (1) {};
\node[main,blue] (2) at (-2,-1) {};
\node[main,blue] (3) at (2,-1) {};
\node[main,green] (6) [below left of=3] {};
\node[main,yellow] (7) [below right of=3] {};

\node at (0,-3.5) {$\begin{bmatrix} 1 & 1 & 0 & 0 &\emptyset\\ green & yellow & blue & blue &  red\end{bmatrix}$};
\node[label] [below of=1] {0};
\node[label] [below of=2] {2};
\node[label] [below of=3] {1};
\node[label] [below of=6] {3};
\node[label] [below of=7] {4};

\node at (0,0.5) {\textcolor{white}{0}};

\draw (1) -- (2);
\draw (1) -- (3);
\draw (3) -- (6);
\draw (3) -- (7);
\end{tikzpicture}
\end{minipage}
\begin{minipage}{0.4\textwidth}
\centering
\begin{tikzpicture}[thick, -open triangle 60, main/.style = {node distance = 15mm, fill, circle, inner sep = 2.5pt}, label/.style = {node distance = 4mm}]
\node[main,red] (1) {};
\node[main,blue] (2) at (-2,-1) {};
\node[main,blue] (3) at (2,-1) {};
\node[main,green] (6) [below left of=3] {};
\node[main,yellow] (7) [below right of=3] {};

\node at (0,-3.5) {$\begin{bmatrix} 2 & 4 & 2 & 4 &\emptyset\\ yellow & blue & green & blue & red\end{bmatrix}$};
\node[label] [below of=1] {4};
\node[label] [below of=2] {1};
\node[label] [below of=3] {2};
\node[label] [below of=6] {3};
\node[label] [below of=7] {0};

\node at (0,0.5) {\textcolor{white}{0}};

\draw (1) -- (2);
\draw (1) -- (3);
\draw (3) -- (6);
\draw (3) -- (7);
\end{tikzpicture}
\end{minipage}
\caption{VCPC for same tree with different labeling}\label{Fig:PruferAltExample}
\end{figure}

Given an arborescence $T$, let $T[v]$ denote the subtree of $T$ containing $v$ and all descendants of $v$. Let $c : V(T) \to \mathbb{N}$ be the coloring function on $V(T)$ (recall that we do not require $c$ be a proper coloring, i.e., we permit the existence of monochromatic edges). Eventually we assign the vertex-labeling of $T$ as the order of traversal in a depth first search (starting from the root), where we let $u <_T v$ denote that $u$ is traversed before $v$. The children of $v \in V(T)$ are traversed in the order given by the ordering of their colors, however, there could be ties. Within a color, we break ties among sibling vertices $v$ by the isomorphism-class of their subtrees $T[v]$. We make this tiebreaking more precise in Algorithm \ref{Alg:GetMinVert}. Notice though that there may still be a tie if $c(v_1) = c(v_2)$ and $T[v_1]$ and $T[v_2]$ are isomorphic as vertex-colored arborescences. However, we make no additional attempt to break this tie and claim this presents no issue. Consider the following example where we abbreviate colors blue, green, red, and yellow by their first letters. 
\begin{figure}[H]
\centering
\begin{minipage}{0.45\textwidth}
\centering
\begin{tikzpicture}[thick, -open triangle 60, main/.style = {node distance = 15mm, fill, circle, inner sep = 2.5pt}, label/.style = {node distance = 4mm}]
\node[main,red] (1) {};
\node[main,blue] (2) at (-2,-1) {};
\node[main,blue] (3) at (2,-1) {};
\node[main,green] (4) [below left of=2] {};
\node[main,yellow] (5) [below right of=2] {};
\node[main,green] (6) [below left of=3] {};
\node[main,yellow] (7) [below right of=3] {};

\node at (0,-3.5) {$\begin{bmatrix} 1 & 1 & 0 & 4 & 4 & 0 &\emptyset\\ g& y& b & g & y & b & r\end{bmatrix}$};
\node[label] [below of=1] {0};
\node[label] [below of=2] {1};
\node[label] [below of=3] {4};
\node[label] [below of=4] {2};
\node[label] [below of=5] {3};
\node[label] [below of=6] {5};
\node[label] [below of=7] {6};

\draw (1) -- (2);
\draw (1) -- (3);
\draw (2) -- (4);
\draw (2) -- (5);
\draw (3) -- (6);
\draw (3) -- (7);
\end{tikzpicture}
\end{minipage}
\begin{minipage}{0.45\textwidth}
\centering
\begin{tikzpicture}[thick, -open triangle 60, main/.style = {node distance = 15mm, fill, circle, inner sep = 2.5pt}, label/.style = {node distance = 4mm}]
\node[main,red] (1) {};
\node[main,blue] (2) at (-2,-1) {};
\node[main,blue] (3) at (2,-1) {};
\node[main,green] (4) [below left of=2] {};
\node[main,yellow] (5) [below right of=2] {};
\node[main,green] (6) [below left of=3] {};
\node[main,yellow] (7) [below right of=3] {};

\node at (0,-3.5) {$\begin{bmatrix} 1 & 1 & 0 & 4 & 4 & 0 &\emptyset\\ g& y& b & g & y & b & r\end{bmatrix}$};
\node[label] [below of=1] {0};
\node[label] [below of=2] {4};
\node[label] [below of=3] {1};
\node[label] [below of=4] {5};
\node[label] [below of=5] {6};
\node[label] [below of=6] {2};
\node[label] [below of=7] {3};

\draw (1) -- (2);
\draw (1) -- (3);
\draw (2) -- (4);
\draw (2) -- (5);
\draw (3) -- (6);
\draw (3) -- (7);
\end{tikzpicture}
\end{minipage}
\caption{VCPC for a tree with two ``automorphic'' labelings}\label{Fig:PruferAltAutoExample}
\end{figure}
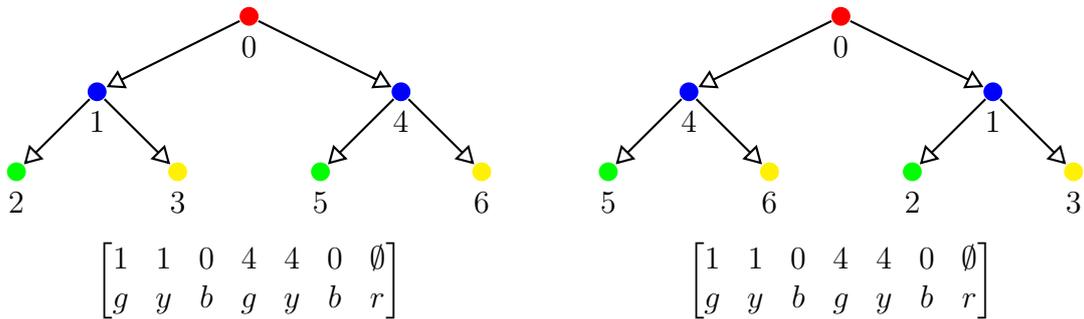

If $T$ and $T'$ are the trees on the left and right of Figure \ref{Fig:PruferAltAutoExample} respectively and $\psi: V(T) \to V(T')$ is the map which preserves the vertex labeling, then $\psi$ is a vertex-colored isomorphism between $T$ and $T'$. In other words, $\psi$ is really a vertex-colored automorphism of $T$, and so both labelings give rise to the same Pr\"{u}fer code. So, we need not break all ties deterministically, only the ones that change the resulting Pr\"{u}fer code. The remaining ties can be broken arbitrarily. In this way, two vertex-colored arborescences are isomorphic if and only if their VCPCs are the same, which we make more precise in Theorem \ref{Prop:VertexColoredPruferIsomorphism}. 

\subsection{Subtree Isomorphism}\label{Sec:SubtreeIso}

It is reasonable to resolve the vertex colored arborescence subisomorphism problem, i.e., the problem of determining if one vertex-colored arborescence is a subtree of another, with VCPCs. We present some examples in the next figure. The graphs appearing in this figure are arranged in rows, and each row forms a separate example. The larger arborescence $T$ appears on the right with a smaller arborescence $T'$ on the left. In the first two examples, $T'$ is isomorphic to a subtree of $T$, whereas this is not the case in the last example. The VCPC is provided for both (again abbreviating colors with their first letter), and the columns of the code for $T$ corresponding to the subtree $T'$ are boxed in blue when appropriate. 

\begin{figure}[H]
\centering
\begin{minipage}{0.45\textwidth}
\centering
\begin{tikzpicture}[thick, -open triangle 60, main/.style = {node distance = 15mm, fill, circle, inner sep = 2.5pt}, label/.style = {node distance = 4mm}]
\node[main,red] (1) {};
\node[main,blue] (2) at (-2,-1) {};
\node[main,blue] (3) at (2,-1) {};
\node[main,green] (6) [below left of=3] {};
\node[main,yellow] (7) [below right of=3] {};

\node at (0,-4) {$\begin{bmatrix} 0 & 2 & 2& 0 & \emptyset \\ b & g & y & b & r \end{bmatrix}$};
\node at (0,-5) {};

\node[label] at (-2,0) {$T'$};

\draw (1) -- (2);
\draw (1) -- (3);
\draw (3) -- (6);
\draw (3) -- (7);
\end{tikzpicture}
\end{minipage}
\begin{minipage}{0.05\textwidth}
$$\rightarrow$$
\end{minipage}
\begin{minipage}{0.45\textwidth}
\centering
\begin{tikzpicture}[thick, -open triangle 60, main/.style = {node distance = 15mm, fill, circle, inner sep = 2.5pt}, label/.style = {node distance = 4mm}]
\node[main,red] (1) {};
\node[main,blue] (2) at (-2,-1) {};
\node[main,blue] (3) at (2,-1) {};
\node[main,green] (4) [below left of=2] {};
\node[main,red] (5) [below right of=2] {};
\node[main,green] (6) [below left of=3] {};
\node[main,yellow] (7) at (2,-2.06066) {};
\node[main,yellow] (8) [below right of=3] {};
\node[main,blue] (9) [below left of=6] {};
\node[main,yellow] (10) [below right of=6] {};

\node at (0,-4) {$\begin{bmatrix} 1&1&0&5&5&4&4&4&0&\emptyset \\ g&r&b&b&y&g&y&y&b&r \end{bmatrix}$};
\node at (0,-5) {};

\node[label] at (-2,0) {$T$};

\draw[blue,dashed] (1) -- (2);
\draw[blue,dashed] (1) -- (3);
\draw (2) -- (4);
\draw (2) -- (5);
\draw[blue,dashed] (3) -- (6);
\draw (3) -- (7);
\draw[blue,dashed] (3) -- (8);
\draw (6) -- (9);
\draw (6) -- (10);
\end{tikzpicture}
\end{minipage}

$T'$ is a subarborescence of $T$ given by the blue (dashed) edges. 

\begin{minipage}{0.45\textwidth}
\centering
\begin{tikzpicture}[thick, -open triangle 60, main/.style = {node distance = 15mm, fill, circle, inner sep = 2.5pt}, label/.style = {node distance = 4mm}]
\node[main,red] (1) {};
\node[main,blue] (2) at (-2,-1) {};
\node[main,blue] (3) at (2,-1) {};
\node[main,green] (6) [below left of=3] {};
\node[main,yellow] (7) [below right of=3] {};

\node at (0,-3) {$\begin{bmatrix} 0 & 2 & 2& 0 & \emptyset \\ b & g & y & b & r \end{bmatrix}$};
\node at (0,-4) {};
\node at (0,1.5) {};

\node[label] at (-2,0) {$T'$};

\draw (1) -- (2);
\draw (1) -- (3);
\draw (3) -- (6);
\draw (3) -- (7);
\end{tikzpicture}
\end{minipage}
\begin{minipage}{0.05\textwidth}
$$\rightarrow$$
\end{minipage}
\begin{minipage}{0.45\textwidth}
\centering
\begin{tikzpicture}[thick, -open triangle 60, main/.style = {node distance = 15mm, fill, circle, inner sep = 2.5pt}, label/.style = {node distance = 4mm}]
\node[main,red] (1) {};
\node[main,blue] (2) at (-2,-1) {};
\node[main,blue] (3) at (2,-1) {};
\node[main,green] (4) [below left of=2] {};
\node[main,red] (5) [below right of=2] {};
\node[main,green] (6) [below left of=3] {};
\node[main,yellow] (7) at (2,-2.06066) {};
\node[main,yellow] (8) [below right of=3] {};

\node[main,green] (9) at (-2,1) {};
\node[main,red] (10) at (-2,0) {};

\node at (0,-3) {$\begin{bmatrix} 0&3&3&2&6&6&6&2&0&\emptyset \\ r&g&r&b&g&y&y&b&r&g \end{bmatrix}$};
\node at (0,-4) {};

\node[label] at (0,1) {$T$};
\node at (0,1.5) {};

\draw[blue,dashed] (1) -- (2);
\draw[blue,dashed] (1) -- (3);
\draw (2) -- (4);
\draw (2) -- (5);
\draw[blue,dashed] (3) -- (6);
\draw (3) -- (7);
\draw[blue,dashed] (3) -- (8);
\draw (9) -- (1);
\draw (9) -- (10);
\end{tikzpicture}
\end{minipage}

$T'$ is a subarborescence of $T$ given by the blue (dashed) edges.

\begin{minipage}{0.45\textwidth}
\centering
\begin{tikzpicture}[thick, -open triangle 60, main/.style = {node distance = 15mm, fill, circle, inner sep = 2.5pt}, label/.style = {node distance = 4mm}]
\node[main,red] (1) {};
\node[main,blue] (2) at (-2,-1) {};
\node[main,blue] (3) at (0,-1) {};
\node[main,green] (4) at (2,-1) {};
\node[main,green] (6) [below left of=3] {};
\node[main,yellow] (7) [below right of=3] {};

\node at (0,-3) {$\begin{bmatrix} 0 & 2 & 2& 0 & 0 & \emptyset  \\ b & g & y & b & g & r \end{bmatrix}$};
\node at (0,-4) {};
\node at (0,1.5) {};

\node[label] at (-2,0) {$T'$};

\draw (1) -- (2);
\draw (1) -- (3);
\draw (1) -- (4);
\draw (3) -- (6);
\draw (3) -- (7);
\end{tikzpicture}
\end{minipage}
\begin{minipage}{0.05\textwidth}
$$\rightarrow$$
\end{minipage}
\begin{minipage}{0.45\textwidth}
\centering
\begin{tikzpicture}[thick, -open triangle 60, main/.style = {node distance = 15mm, fill, circle, inner sep = 2.5pt}, label/.style = {node distance = 4mm}]
\node[main,red] (1) {};
\node[main,blue] (2) at (-2,-1) {};
\node[main,blue] (3) at (2,-1) {};
\node[main,green] (4) [below left of=2] {};
\node[main,red] (5) [below right of=2] {};
\node[main,green] (6) [below left of=3] {};
\node[main,yellow] (7) at (2,-2.06066) {};
\node[main,yellow] (8) [below right of=3] {};

\node[main,green] (9) at (-2,1) {};
\node[main,red] (10) at (-2,0) {};

\node at (0,-3) {$\begin{bmatrix} 0&3&3&2&6&6&6&2&0&\emptyset \\ r&g&r&b&g&y&y&b&r&g \end{bmatrix}$};
\node at (0,-4) {};

\node[label] at (0,1) {$T$};
\node at (0,1.5) {};

\draw (1) -- (2);
\draw (1) -- (3);
\draw (2) -- (4);
\draw (2) -- (5);
\draw (3) -- (6);
\draw (3) -- (7);
\draw (3) -- (8);
\draw (9) -- (1);
\draw (9) -- (10);
\end{tikzpicture}
\end{minipage}

$T'$ is not a subarborescence of $T$ (although the undirected tree underlying $T'$ is a subtree of the undirected tree underlying $T$). 
\caption{Vertex-colored subarborescence examples with accompanying VCPCs.}\label{Fig:SubtreeExamples}
\end{figure}

In Theorem \ref{Thm:SubtreeIsomorphism} we show arborescence subisomorphism can be seen through three simultaneous properties in VCPCs. The first is obvious, namely that colors must agree. The other two ensure the graph structures are identical. 

Let $<_T$ and $<_{T'}$ be the depth-first search orders introduced above Figure \ref{Fig:PruferAltAutoExample} for $T$ and $T'$ respectively (a more formal description of this ordering is provided in Section \ref{Sec:FormalAlgs}). Furthermore, let $P$ and $P'$ be the corresponding VCPCs. If $\psi : V(T') \to V(T)$ is an isomorphism between $T'$ and a subtree of $T$ and $u', v' \in V(T')$, then $u' <_{T'} v'$ if and only if $\psi(u') <_T \psi(v')$. This has a convenient consequence on the relationship between the first rows of $P$ and $P'$, namely that the first row of $P'$ must have the same ``shape'' as the first row of the columns of $P$ corresponding to $\psi(V(T'))$. 

Formally, given a finite sequence of integers $(x_n)_{n=0}^N$ with exactly $m$ distinct elements, there exists a unique function $f : \{x_n\} \to \{0,\dotsc, m-1\}$ so that $x_i < x_j$ if and only if $f(x_i) < f(x_j)$, $x_i = x_j$ if and only if $f(x_i) = f(x_j)$, and $x_i > x_j$ if and only if $f(x_i) > f(x_j)$. More precisely, $f$ is the function which groups $(x_n)$ by value, sorts them ascendingly, and maps each $x_n$ to its index in the resulting array. We define $(f(x_n))_{n=0}^N$ to be the \textit{shape} of the original sequence $(x_n)$. 

To see an example of this, consider the pair of trees at the top of Figure \ref{Fig:SubtreeExamples}. The populated values of the first line of the code for $T'$ are $(0,2,2,0)$, whereas the corresponding values in blue boxes in the code for $T$ are $(0,4,4,0)$. Both have shape $(0,1,1,0)$. Additionally, $(1,2,3,4,5,6)$ has the same shape as $(1,4,5,8,9,10)$, but does not have the same shape as $(1,1,2,3,4,5)$. 

The same shape in the first row of $P'$ and an appropriate subset of the first row of $P$ is not strong enough to determine if $T'$ is isomorphic to a subtree of $T$. Consider the example presented in Figure \ref{Fig:ThirdPropertyExample}. There is a slice of $P$ (given by the blue boxes around the appropriate columns of $P$) which has the same shape and same vertex colors as $P'$; however, the vertices giving rise to this slice of $P$ do not form a subtree of $T$ isomorphic to $T'$. So, it is necessary to place conditions on the columns of $P$ outside the columns corresponding to the subtree potentially isomorphic to $T'$. We leave the exact property statement to Theorem \ref{Thm:SubtreeIsomorphism}. The idea is that within a depth first search, suppose vertex $u$ is traversed before vertex $v$. With no additional information, it is impossible to know if $u$ is a vertex on the unique path between $v$ and the root. Consider the shape of the first row of $P'$ and a subset of the first row of $P$ aids in making this determination, but still does not provide enough information (see the example in Figure \ref{Fig:ThirdPropertyExample}). Instead we must consider the entire first row of $P$. 

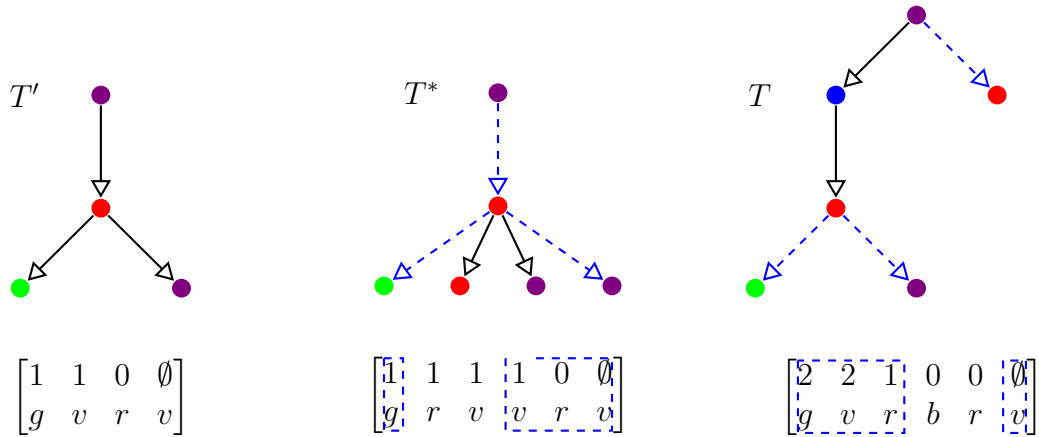
\begin{figure}[H]
\centering
\begin{minipage}{0.3\textwidth}
\centering
\begin{tikzpicture}[node distance = 15mm, thick, -open triangle 60, main/.style = {fill, circle, inner sep = 2.5pt}]
\node[main,violet] (1) {};
\node[main,red] (2) [below of=1] {};
\node[main,green] (3) [below left of=2] {};
\node[main,violet] (4) [below right of=2] {};
\node[main,white] (5) [above right of=1] {};

\draw (1) -- (2);
\draw (2) -- (3);
\draw (2) -- (4);

\node at (0,-4) {$\begin{bmatrix} 1&1&0&\emptyset \\ g&v&r&v \end{bmatrix}$};
\node at (-1,0) {$T'$};
\end{tikzpicture}
\end{minipage}
\begin{minipage}{0.3\textwidth}
\centering
\begin{tikzpicture}[node distance = 15mm, thick, main/.style = {fill, circle, inner sep = 2.5pt}]
\node[main,violet] (1) {};
\node[main,red] (2) [below of=1] {};
\node[main,green] (3) at (-1.5,-2.56066) {};
\node[main,red] (4) at (-0.5,-2.56066) {};
\node[main,violet] (5) at (0.5,-2.56066) {};
\node[main,violet] (6) at (1.5,-2.56066) {};

\node[main,white] (7) at (0,1) {};

\draw[-open triangle 60, blue,dashed] (1) -- (2);
\draw[-open triangle 60,blue,dashed] (2) -- (3);
\draw[-open triangle 60] (2) -- (4);
\draw[-open triangle 60] (2) -- (5);
\draw[-open triangle 60,blue,dashed] (2) -- (6);

\node at (0,-4) {$\begin{bmatrix} 1&1&1&1&0&\emptyset \\ g&r&v&v&r&v \end{bmatrix}$};
\node at (-1,0) {$T^*$};
\draw[thick,blue,dashed] (-1.5,-3.52) -- (-1.5,-4.48) -- (-1.25,-4.48) -- (-1.25,-3.52) -- (-1.5,-3.52);
\draw[thick,blue,dashed] (0.1,-3.52) -- (0.1,-4.48) -- (1.5,-4.48) -- (1.5,-3.52) -- (0.5,-3.52);
\end{tikzpicture}
\end{minipage}
\begin{minipage}{0.3\textwidth}
\centering
\begin{tikzpicture}[node distance = 15mm, thick, main/.style = {fill, circle, inner sep = 2.5pt}]
\node[main,blue] (1) {};
\node[main,red] (2) [below of=1] {};
\node[main,green] (3) [below left of=2] {};
\node[main,violet] (4) [below right of=2] {};
\node[main,violet] (5) [above right of=1] {};
\node[main,red] (6) [below right of=5] {};

\draw[-open triangle 60] (1) -- (2);
\draw[-open triangle 60,blue,dashed] (2) -- (3);
\draw[-open triangle 60,blue,dashed] (2) -- (4);
\draw[-open triangle 60] (5) -- (1);
\draw[-open triangle 60, blue,dashed] (5) -- (6);

\node at (1,-4) {$\begin{bmatrix} 2&2&1&0&0&\emptyset \\ g&v&r&b&r&v \end{bmatrix}$};
\node at (-1,0) {$T$};
\draw[thick,blue,dashed] (-0.5,-3.52) -- (-0.5,-4.48) -- (0.9,-4.48) -- (0.9,-3.52) -- (-0.5,-3.52);
\draw[thick,blue,dashed] (2.2,-3.52) -- (2.2,-4.48) -- (2.5,-4.48) -- (2.5,-3.52) -- (2.2,-3.52);
\end{tikzpicture}
\end{minipage}
\caption{Motivating example for the \textit{\textbf{incident edge}} property required for subtree isomorphism. Notice how the dashed blue edges of $T$ do not form a graph isomorphic to $T'$, even though the boxed portions of the VCPC for $T$ has the same shape as that of $T'$. }\label{Fig:ThirdPropertyExample}
\end{figure}

\subsection{Application}\label{Sec:Application}

We make use of the above isomorphism test when studying graphs built from United States freight data. The Department of Transportation's Bureau of Transportation Statistics \cite{FreightData} provides information detailing the weight and value of freight shipped within the US. These values are broken out by year from 2018 through 2023, and statistics for 42 different commodities are given. The country is partitioned geographically into 132 different regions, first by state (and the District of Columbia), then major cities are separated from the rest of the state. For example, Pennsylvania is divided into three regions, namely Pittsburgh, Philadelphia, and the rest of the state. Major cities that reach across multiple states (e.g., St. Louis, Kansas City, etc.) have separate entries for each state they influence. We aggregate these cities, reducing the count to 106 regions. 

For each combination of year and commodity, we build a graph whose vertices are the 106 regions, and edges connect two regions if a positive amount of the given commodity was shipped between the two regions in the given year. The edge weight is given by the weight (in tons) of the cargo moved. Vertices are partitioned into a few ``color'' classes. Regions which are entire states (or the portions of a state omitted by large cities) form a color class. Another class is a collection of 13 ``seaports'', large American ports that appear in the data. The remaining cities are then partitioned into four classes based on the presence of rail hubs. Union Pacific is the largest rail company in the United States. Their networks span the central and western portions of the continental US, while Norfolk Southern is the largest rail company that operates in the Eastern part of the country. The four classes for the remaining cities are given by the existence of rail hubs for each subset of these two companies. Overall, 25 non-seaport cities have a Norfolk Southern rail hub, 19 host a Union Pacific rail hub, and 6 are home to one of each. Notice that the graphs constructed frequently have monochromatic edges. 

In this way, $6\cdot 42 = 252$ graphs are constructed, one for each year-commodity pair, each on the 106 vertices defined above. Because of the construction, the graphs are quite dense, making them challenging to study. However, matroid partitioning (introduced in \cite{DarHarPhuPro2018}) identifies a spanning forest in each graph. By designating the seaports as rulers, the resulting matroid partition assigns a distinct component to each seaport. When applied to all 252 graphs, matroid partitioning forms 3242 components that contain a seaport (components not containing a seaport are ignored), the average containing 9 vertices. However, the distribution of tree orders is heavy-tailed. Many trees have few (less than five) vertices, so there are some with 75 or more vertices. For the computational convenience of this experiment, we restrict the collection of all components so that trees are only kept if they have at most 20 vertices. Thus, the collection of 3242 trees is reduced to a collection of 2843.

Common structure within these components indicates the typical influence a seaport city has on the surrounding local economy. To identify such structure, we partition the components into vertex-colored isomorphism classes, then for each pair of classes, $\mathcal{T}_1$ and $\mathcal{T}_2$, we determine if a representative of one class is isomorphic (as vertex-colored trees) to a subtree of a representative of the other. We perform this computation in two ways and compare the execution time. One implementation makes use of Python's networkx library, and the other is a Python implementation of the VCPC algorithms presented presently. Table \ref{Tab:FreightIso} summarizes the execution times (in seconds) of the algorithms. Note that except for the choice of isomorphism/subisomorphism algorithm, the same code is executed in both trials. 

\begin{table}[H]
\centering
\begin{tabular}{c | c c}
& networkx & Pr\"{u}fer coding  \\
\hline
isomorphism classes & 0.855 & 0.397 \\
subisomorphism & 216.476 & 117.075 \\
\end{tabular}
\caption{Run times (in seconds) to (1) compute the partition into vertex-colored isomorphism classes and (2) compute subtree isomorphisms. Note that the Pr\"{u}fer code isomorphism implementation time also includes the one-time work of computing the canonical ordering and VCPC for each tree.}\label{Tab:FreightIso}
\end{table}

While we are ultimately unable to provide complexity bounds on the Pr\"{u}fer code algorithms we introduce, the data of Table \ref{Tab:FreightIso} suggests the VCPC algorithms are faster than the networkx implementation, at least in this case. More experimentation is detailed in Section \ref{Sec:PythonExp}. 

We conclude the section by presenting a representative of the isomorphism class containing the most common structure, namely, the tree on at most 20 vertices containing the most components as a (not necessarily proper) subtree. The structure shown in Figure \ref{Fig:FreightExample} has 13 vertices and contains 2029 of the 2843 components as a (potentially improper) subtree. Vertices are labeled by their color class, where ``S'' denotes seaport (the ruling vertex), ``R'' denotes the ``rest of state'' or statewide regions, ``U'' and ``N'' denote the cities with only a Union Pacific or Norfolk Southern rail hub respectively, ``B'' denotes cities with hubs for both companies, and ``H'' denotes the cities without a hub, i.e., cities with a hub for neither of the companies considered. We take the ordering on color classes to be S $<$ B $<$ U $<$ N $<$ R $<$ H. 

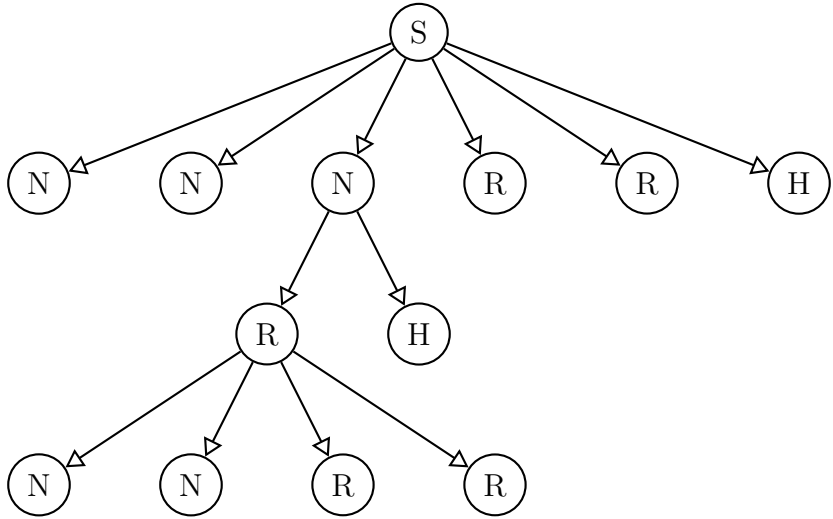
\begin{figure}[H]
\centering
\begin{tikzpicture}[thick, -open triangle 60, main/.style = {draw, circle}]
\node[main] (1) {S};
\node[main] (2) at (-5,-2) {N};
\node[main] (3) at (-3,-2) {N};
\node[main] (4) at (-1,-2) {N};
\node[main] (5) at (1,-2) {R};
\node[main] (6) at (3,-2) {R};
\node[main] (7) at (5,-2) {H};

\node[main] (8) at (-2,-4) {R};
\node[main] (9) at (0,-4) {H};

\node[main] (10) at (-5,-6) {N};
\node[main] (11) at (-3,-6) {N};
\node[main] (12) at (-1,-6) {R};
\node[main] (13) at (1,-6) {R};

\draw (1) -- (2);
\draw (1) -- (3);
\draw (1) -- (4);
\draw (1) -- (5);
\draw (1) -- (6);
\draw (1) -- (7);
\draw (4) -- (8);
\draw (4) -- (9);
\draw (8) -- (10);
\draw (8) -- (11);
\draw (8) -- (12);
\draw (8) -- (13);
\end{tikzpicture}
\caption{The most representative component on at most 20 vertices.}\label{Fig:FreightExample}
\end{figure}

\section{Formal Algorithms}\label{Sec:FormalAlgs}

Let $T$ be an arborescence rooted at vertex $r \in V(T)$, i.e., a directed tree where all edges are oriented away from the root. Furthermore, let the vertices of $T$ be colored by the function $c : V(T) \to \mathbb{N}$, where we do not require $c$ be a proper coloring of $T$, i.e., we permit monochromatic edges. For any $v \in V(T)$, define $T[v]$ to be the subtree of $T$ rooted at $v$ containing ($v$ and) all descendants of $v$. We aim to construct an ordering of $V(T)$ which acts as a ``standard'' depth first search of $T$. We accomplish this by sorting the children of $v$ for any $v\in V(T)$ based on $c$, and we call this search the \textit{Lexicographic Depth-First Search} of $T$, denoted $\mbox{L}\mathcal{D}(T)$. To this end, we simultaneously encode the graph structure and vertex colors of the descendants of $v$ for each $v \in V(T)$ in an array, called the \textit{Lexicographic Depth-First Search Array}, denoted $\mbox{L}\mathcal{D}_A(T[v])$. We present two algorithms to compute $\mbox{L}\mathcal{D}_A$, one a direct computation and the other recursive. 

We present our pseudocode making use of array and dictionary data structures. We start by storing all adjacency information for $T$ in a dictionary called $children$. For each $v \in V(T)$, if $v$ has at least one out-neighbor (meaning $v$ is the source for at least one directed edge), then $children$ contains $v$ as a key. The corresponding value is an array containing all out-neighbors of $v$ (in some arbitrary order).

Now we present a direct implementation which constructs $\mbox{L}\mathcal{D}_A(T)$. Within this function, $\mbox{L}\mathcal{D}_A(T[u])$ is computed for all vertices $u \in V(T)$ in the reverse order of some breadth first search. When $u \in V(T)$ is a leaf, define $\mbox{L}\mathcal{D}_A(T[u]) = [[]]$, i.e., an array containing an empty array as its only entry. When $u \in V(T)$ is not a leaf, $\mbox{L}\mathcal{D}_A(T[u])$ is constructed by calling Algorithm \ref{Alg:Sort} which sorts the children of $u$ by iteratively calling Algorithm \ref{Alg:GetMinVert}, which finds the smallest provided vertex based on their lexicographic depth first search array (these have already been computed because the vertices are in the reverse order of a breadth first search). 

Given vertices $u$ and $v$, what do we mean by $\mbox{L}\mathcal{D}_A(T[v]) < \mbox{L}\mathcal{D}_A(T[u])$? Ultimately, $\mbox{L}\mathcal{D}_A$ is an array of arrays where each entry is some color (where colors come from $\mathbb{N}$). To compare arrays $x$ and $y$, we define $x < y$ if and only if there exists index $i$ so that $x[j] = y[j]$ for all $j < i$ and either $x[i] < y[i]$ or $x[i]$ does not exist because $x$ has length $i$, but $y[i]$ does exist.

\begin{algorithm}[H]
\caption{$get\_min\_vertex(verts, c, L\mathcal{D}_A)$ - return the minimum vertex in $verts$}\label{Alg:GetMinVert}
\begin{algorithmic}
\STATE $min\_vert \gets$ the zeroth vertex in $verts$
\STATE remove the zeroth entry of $verts$
\FOR{$v$ in $verts$}
\IF{$c(v) < c(min\_vert)$}
\STATE $min\_vert \gets v$
\ELSIF{if $c(v) = c(min\_vert)$ and $L\mathcal{D}_A[T[v]] < L\mathcal{D}_A[T[min\_vert]]$}
\STATE $min\_vert \gets v$
\ENDIF
\ENDFOR
\RETURN $min\_vert$
\end{algorithmic}
\end{algorithm}

\begin{algorithm}[H]
\caption{$sort\_siblings(verts, c, L\mathcal{D}_A)$ - sorting siblings by color and $L\mathcal{D}_A$}\label{Alg:Sort}
\begin{algorithmic}
\STATE $sorted\_verts \gets []$ 
\WHILE{$verts$ is not empty}
\STATE $min\_vert \gets get\_min\_vertex(verts, c, L\mathcal{D}_A)$
\STATE append $min\_vert$ to $sorted\_verts$
\STATE remove $min\_vert$ from $verts$ 
\ENDWHILE
\RETURN $sorted\_verts$
\end{algorithmic}
\end{algorithm}

\begin{algorithm}[H]
\caption{get\_$\mbox{L}\mathcal{D}_A$($children$, $r$, $c$) - computing $\mbox{L}\mathcal{D}_A(T)$}\label{Alg:DirectDef}
\begin{algorithmic}
\STATE $B \gets$ an array containing a breadth-first search ordering of $V(T)$. 
\STATE $L\mathcal{D}_A \gets \{\}$ 

\FOR{$v$ in the reverse ordering of $B$}
\STATE $L\mathcal{D}_A[v] \gets []$
\IF{$v$ is a key in $children$}
\STATE $sn \gets$ sort the out-neighbors of $v$ with $sort\_siblings(children[v], c, L\mathcal{D}_A)$ 
\FOR{$n$ in $sn$}
\STATE append the color of $n$ to the zeroth array in $L\mathcal{D}_A[v]$
\STATE append all elements of $L\mathcal{D}_A[n]$ to $L\mathcal{D}_A[v]$
\ENDFOR
\ENDIF
\ENDFOR

\RETURN $L\mathcal{D}_A[r]$
\end{algorithmic}
\end{algorithm}


The following figure provides an example of a tree $T$, where each vertex $v$ is labeled with $\mbox{L}\mathcal{D}_A(T[v])$. Suppose the colors $\{0,1,2\}$ are represented as blue, green, and yellow respectively. 

\begin{figure}[H]
\centering
\begin{tikzpicture}[thick, -open triangle 60, main/.style = {node distance = 15mm, fill, circle, inner sep = 2.5pt}, label/.style = {node distance = 4mm}]
\node[main,blue] (1) {};
\node[main,yellow] (2) at (-2,-1) {};
\node[main,green] (3) at (2,-1) {};
\node[main,green] (4) [below left of=2] {};
\node[main,blue] (5) [below right of=2] {};
\node[main,green] (6) [below left of=3] {};
\node[main,green] (7) [below right of=3] {};
\node[main,blue] (8) [below left of=7] {};
\node[main,yellow] (9) at (3.06066,-3.12132) {};
\node[main,blue] (10) [below right of=7] {};

\node[label] [above of=1] {$[[1,2], [1,1], [], [0,0,2], [], [], [], [0,1], [], []]$};
\node[label] at (-3.25,-1) {$[[0,1], [], []]$};
\node[label] at (4.35,-1) {$[[1,1], [], [0,0,2], [], [], []]$};
\node[label] [below of=4] {$[[]]$};
\node[label] [below of=5] {$[[]]$};
\node[label] [below of=6] {$[[]]$};
\node[label] at (4.65,-2.06066) {$[[0,0,2], [], [], []]$};
\node[label] [below of=8] {$[[]]$};
\node[label] [below of=9] {$[[]]$};
\node[label] [below of=10] {$[[]]$};

\draw (1) -- (2);
\draw (1) -- (3);
\draw (2) -- (4);
\draw (2) -- (5);
\draw (3) -- (6);
\draw (3) -- (7);
\draw (7) -- (8);
\draw (7) -- (9);
\draw (7) -- (10);
\end{tikzpicture}
\caption{$\mbox{L}\mathcal{D}_A$ Example on tree $T$}\label{Fig:LDLExample}
\end{figure}

This example illustrates one limitation of the $\mbox{L}\mathcal{D}_A$ construction, namely that $\mbox{L}\mathcal{D}_A(T[v])$ encodes information about the colors and substructure of $T[v]$ but not the color of $v$ itself. So, it is impossible to reconstruct $c : V(T) \to \mathbb{N}$ from $\mbox{L}\mathcal{D}_A(T)$ since information about $c(r)$ is omitted. One way around this is to add a new artificial root node $r'$, with an edge from $r$ to $r'$, and $c(r')$ set arbitrarily. Then compute $\mbox{L}\mathcal{D}_A(T')$, which will be $[[c(r)]] + \mbox{L}\mathcal{D}_A(T)$, where ``$+$'' is concatenation. An example is illustrated in Figure \ref{Fig:LDLPseudoRootExample}. We arbitrarily chose $c_{T'}(r') = y$. 

\begin{figure}[H]
\centering
\begin{tikzpicture}[thick, -open triangle 60, main/.style = {node distance = 15mm, fill, circle, inner sep = 2.5pt}, label/.style = {node distance = 4mm}]
\node[main,blue] (1) {};
\node[main,yellow] (2) at (-2,-1) {};
\node[main,green] (3) at (2,-1) {};
\node[main,green] (4) [below left of=2] {};
\node[main,blue] (5) [below right of=2] {};
\node[main,green] (6) [below left of=3] {};
\node[main,green] (7) [below right of=3] {};
\node[main,blue] (8) [below left of=7] {};
\node[main,yellow] (9) at (3.06066,-3.12132) {};
\node[main,blue] (10) [below right of=7] {};
\node[main,yellow] (11) [above of=1] {};

\node[label] at (4,0) {$[[1,2], [1,1], [], [0,0,2], [], [], [], [0,1], [], []]$};
\node[label] at (-3.25,-1) {$[[0,1], [], []]$};
\node[label] at (4.35,-1) {$[[1,1], [], [0,0,2], [], [], []]$};
\node[label] [below of=4] {$[[]]$};
\node[label] [below of=5] {$[[]]$};
\node[label] [below of=6] {$[[]]$};
\node[label] at (4.3,-2.06066) {$[[0,1], [], []]$};
\node[label] [below of=8] {$[[]]$};
\node[label] [below of=9] {$[[]]$};
\node[label] [below of=10] {$[[]]$};
\node[label] [above of=11] {$[[0], [1,2], [1,1], [], [0,0,2], [], [], [], [0,1], [], []]$};

\draw (11) -- (1);
\draw (1) -- (2);
\draw (1) -- (3);
\draw (2) -- (4);
\draw (2) -- (5);
\draw (3) -- (6);
\draw (3) -- (7);
\draw (7) -- (8);
\draw (7) -- (9);
\draw (7) -- (10);
\end{tikzpicture}
\caption{$\mbox{L}\mathcal{D}_A$ Example on tree $T'$ to obtain $\overline{\mbox{L}\mathcal{D}_A}(T)$}\label{Fig:LDLPseudoRootExample}
\end{figure}

In this way, we call $\mbox{L}\mathcal{D}_A(T')$ the \textit{Full Lexicographic Depth-First Search Array} of $T$, denoted $\overline{\mbox{L}\mathcal{D}_A}(T)$, and from $\overline{\mbox{L}\mathcal{D}_A}(T)$ it is possible to reconstruct $T$ together with the coloring function $c : V(T) \to\mathbb{N}$. The following algorithm provides the reconstruction, relying on none of the algorithms defined above. It returns both a dictionary of neighbors (where we label vertices $0$ through $n-1$) and the coloring function, which is enough information to uniquely identify $T$. 

\begin{algorithm}[H]
\caption{$reconstruct\_tree(\overline{L\mathcal{D}_A}, co)$ - reconstruct $T$ from $\overline{\mbox{L}\mathcal{D}_A}(T)$}\label{Alg:RecomputeTree}
\begin{algorithmic}
\STATE $vertex\_count \gets 0$
\STATE $c \gets$ an empty dictionary
\FOR{each array $a$ in $\overline{L\mathcal{D}_A}$}
\FOR{each entry $e$ of $a$}
\STATE $c[vertex\_count] \gets e$, which is the color of some new vertex
\STATE replace $e$ with $vertex\_count$
\STATE increment $vertex\_count$
\ENDFOR
\ENDFOR
\STATE
\STATE $children \gets $ an empty dictionary 
\STATE $A \gets $ an array containing the zeroth entry of $\overline{L\mathcal{D}_A}$
\STATE remove the zeroth element from $\overline{L\mathcal{D}_A}$
\FOR{each array $a$ in $\overline{L\mathcal{D}_A}$}
\STATE $cur\_vert \gets A[0][0]$
\STATE remove $cur\_vert$ from $A[0]$
\IF{$A[0] = []$}
\STATE remove the zeroth element of $A$
\ENDIF
\IF{$a \neq []$}
\STATE add the (key, value) pair $(cur\_vert, l)$ to $children$
\STATE prepend the list containing the single element $a$ to $A$
\ENDIF
\ENDFOR 
\STATE
\RETURN $children$, $color$
\end{algorithmic}
\end{algorithm}

From the algorithm for $\mbox{L}\mathcal{D}_A$ it is straightforward to obtain the \textit{Lexicographic Depth First Search} of $T$, denoted $\mbox{L}\mathcal{D}(T)$. Simply begin a depth first search at the root, and traverse children in the order that preserves $\mbox{L}\mathcal{D}_A$. Frequently we use $\phi : V(T) \to \{0,\dotsc,n-1\}$ to denote $\mbox{L}\mathcal{D}(T)$, where if $u, v \in V(T)$ then $u$ is traversed before $v$ in $\mbox{L}\mathcal{D}(T)$ if and only if $\phi(u) < \phi(v)$. Here are a few examples of $\mbox{L}\mathcal{D}(T)$, where a drawing of $T$ is provided on the left and each vertex on the right is labeled by $\phi$ in the redrawing of $T$ that realizes the $\mbox{L}\mathcal{D}$. The colors utilized (black, green, red, and yellow) are ordered lexicographically. 

\begin{figure}[H]
\centering
\begin{minipage}{0.45\textwidth}
\centering
\begin{tikzpicture}[node distance = 15mm, thick, -open triangle 60, main/.style = {fill, circle, inner sep = 2.5pt}]
\node[main,black] (1) {};
\node[main,yellow] (2) at (-2,-1) {};
\node[main,green] (3) at (2,-1) {};
\node[main,red] (4) [below left of=2] {};
\node[main,green] (5) [below right of=2] {};
\node[main,yellow] (6) [below left of=3] {};
\node[main,black] (7) [below right of=3] {};

\draw (1) -- (2);
\draw (1) -- (3);
\draw (2) -- (4);
\draw (2) -- (5);
\draw (3) -- (6);
\draw (3) -- (7);
\end{tikzpicture}
\end{minipage}
\begin{minipage}{0.05\textwidth}
$$\rightarrow$$
\end{minipage}
\begin{minipage}{0.45\textwidth}
\centering
\begin{tikzpicture}[node distance = 15mm, thick, -open triangle 60, main/.style = {fill, circle, inner sep = 2.5pt}]
\node[main,black] (1) {};
\node[main,green] (2) at (-2,-1) {};
\node[main,yellow] (3) at (2,-1) {};
\node[main,black] (4) [below left of=2] {};
\node[main,yellow] (5) [below right of=2] {};
\node[main,green] (6) [below left of=3] {};
\node[main,red] (7) [below right of=3] {};

\node[] [left of=1, node distance = 5mm] {0};
\node[] [left of=2, node distance = 4mm] {1};
\node[] [left of=3, node distance = 5mm] {4};
\node[] [left of=4, node distance = 4mm] {2};
\node[] [left of=5, node distance = 4mm] {3};
\node[] [left of=6, node distance = 4mm] {5};
\node[] [left of=7, node distance = 4mm] {6};

\draw (1) -- (2);
\draw (1) -- (3);
\draw (2) -- (4);
\draw (2) -- (5);
\draw (3) -- (6);
\draw (3) -- (7);
\end{tikzpicture}
\end{minipage}
\begin{minipage}{0.45\textwidth}
\centering
\begin{tikzpicture}[node distance = 15mm, thick, -open triangle 60, main/.style = {fill, circle, inner sep = 2.5pt}]
\node[main,black] (1) {};
\node[main,red] (2) at (-2,-1) {};
\node[main,red] (3) at (2,-1) {};
\node[main,yellow] (4) [below left of=2] {};
\node[main,green] (5) [below right of=2] {};
\node[main,yellow] (6) [below left of=3] {};
\node[main,black] (7) [below right of=3] {};

\draw (1) -- (2);
\draw (1) -- (3);
\draw (2) -- (4);
\draw (2) -- (5);
\draw (3) -- (6);
\draw (3) -- (7);
\end{tikzpicture}
\end{minipage}
\begin{minipage}{0.05\textwidth}
$$\rightarrow$$
\end{minipage}
\begin{minipage}{0.45\textwidth}
\centering
\begin{tikzpicture}[node distance = 15mm, thick, -open triangle 60, main/.style = {fill, circle, inner sep = 2.5pt}]
\node[main,black] (1) {};
\node[main,red] (2) at (-2,-1) {};
\node[main,red] (3) at (2,-1) {};
\node[main,black] (4) [below left of=2] {};
\node[main,yellow] (5) [below right of=2] {};
\node[main,green] (6) [below left of=3] {};
\node[main,yellow] (7) [below right of=3] {};

\node[] [left of=1, node distance = 5mm] {0};
\node[] [left of=2, node distance = 4mm] {1};
\node[] [left of=3, node distance = 5mm] {4};
\node[] [left of=4, node distance = 4mm] {2};
\node[] [left of=5, node distance = 4mm] {3};
\node[] [left of=6, node distance = 4mm] {5};
\node[] [left of=7, node distance = 4mm] {6};

\draw (1) -- (2);
\draw (1) -- (3);
\draw (2) -- (4);
\draw (2) -- (5);
\draw (3) -- (6);
\draw (3) -- (7);
\end{tikzpicture}
\end{minipage}
\begin{minipage}{0.45\textwidth}
\centering
\begin{tikzpicture}[node distance = 15mm, thick, -open triangle 60, main/.style = {fill, circle, inner sep = 2.5pt}]
\node[main,black] (1) {};
\node[main,red] (2) at (-2,-1) {};
\node[main,red] (3) at (2,-1) {};
\node[main,green] (4) [below left of=2] {};
\node[main,green] (5) [below right of=2] {};
\node[main,green] (6) [below left of=3] {};
\node[main,green] (7) [below right of=3] {};

\draw (1) -- (2);
\draw (1) -- (3);
\draw (2) -- (4);
\draw (2) -- (5);
\draw (3) -- (6);
\draw (3) -- (7);
\end{tikzpicture}
\end{minipage}
\begin{minipage}{0.05\textwidth}
$$\rightarrow$$
\end{minipage}
\begin{minipage}{0.45\textwidth}
\centering
\begin{tikzpicture}[node distance = 15mm, thick, -open triangle 60, main/.style = {fill, circle, inner sep = 2.5pt}]
\node[main,black] (1) {};
\node[main,red] (2) at (-2,-1) {};
\node[main,red] (3) at (2,-1) {};
\node[main,green] (4) [below left of=2] {};
\node[main,green] (5) [below right of=2] {};
\node[main,green] (6) [below left of=3] {};
\node[main,green] (7) [below right of=3] {};

\node[] [left of=1, node distance = 5mm] {0};
\node[] [left of=2, node distance = 4mm] {1};
\node[] [left of=3, node distance = 5mm] {4};
\node[] [left of=4, node distance = 4mm] {2};
\node[] [left of=5, node distance = 4mm] {3};
\node[] [left of=6, node distance = 4mm] {5};
\node[] [left of=7, node distance = 4mm] {6};

\draw (1) -- (2);
\draw (1) -- (3);
\draw (2) -- (4);
\draw (2) -- (5);
\draw (3) -- (6);
\draw (3) -- (7);
\end{tikzpicture}
\end{minipage}
\caption{Examples of $\mbox{L}\mathcal{D}(T)$}\label{Fig:ColorInducedLabelExample}
\end{figure}

Now we progress to the underlying theory. Throughout the rest of the section, assume vertex colors are given by $\mathbb{N}$. Furthermore, let $\mathcal{T}_{\mathbb{N}}$ be the collection of all finite arborescences with vertices colored by $\mathbb{N}$ (where we allow monochromatic edges). For $T \in \mathcal{T}_\mathbb{N}$, let $c_T : V(T) \to \mathbb{N}$ be the coloring of vertices in $T$. 

\begin{prop}
$\overline{\mbox{L}\mathcal{D}}_L$ defines an equivalence relation on $\mathcal{T}_{\mathbb{N}}$ where for pair $S, T \in \mathcal{T}_\mathbb{N}$, $S$ and $T$ are equivalent if and only if 
$\overline{\mbox{L}\mathcal{D}}(S)$ and  $\overline{\mbox{L}\mathcal{D}}(T)$ are identical. 
\end{prop}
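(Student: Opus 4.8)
The plan is to split the statement into a formal part and a substantive part. The formal part is immediate: for any function $g$ on any set $X$, the relation $x \sim y \iff g(x) = g(y)$ is reflexive (since $g(x) = g(x)$), symmetric (since equality of arrays is symmetric), and transitive (since equality is transitive); taking $X = \mathcal{T}_{\mathbb{N}}$ and $g = \overline{\mbox{L}\mathcal{D}}$ then gives exactly the relation $\overline{\mbox{L}\mathcal{D}}_L$ in the statement. The substantive part, and the only thing that needs real argument, is that $\overline{\mbox{L}\mathcal{D}}$ is genuinely a \emph{function} on $\mathcal{T}_{\mathbb{N}}$, i.e.\ that its value does not depend on the arbitrary choices made inside Algorithms~\ref{Alg:GetMinVert}--\ref{Alg:DirectDef}: the particular breadth-first ordering $B$, the order in which out-neighbours of a vertex are listed in the dictionary $children$, and — the delicate one — the tie-breaking in $get\_min\_vertex$ when $c(v) = c(min\_vert)$ and $\mbox{L}\mathcal{D}_A[T[v]] = \mbox{L}\mathcal{D}_A[T[min\_vert]]$. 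So I would state, as the key lemma, that for every $T \in \mathcal{T}_{\mathbb{N}}$ and every $v \in V(T)$ the array $\mbox{L}\mathcal{D}_A(T[v])$ is uniquely determined by $T$ (independently of those choices), and derive the proposition from it.

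I would prove the lemma by induction on $|V(T[v])|$. Base case: if $v$ is a leaf then $\mbox{L}\mathcal{D}_A(T[v]) = [[\,]]$, independent of everything. Inductive step: let $n_1,\dots,n_k$ be the children of $v$; by the induction hypothesis each $\mbox{L}\mathcal{D}_A(T[n_i])$ is already well-defined. By inspection of Algorithm~\ref{Alg:DirectDef}, $\mbox{L}\mathcal{D}_A(T[v])$ is the concatenation, over the children listed in the order output by $sort\_siblings$, of the blocks $[c(n_i)] + \mbox{L}\mathcal{D}_A(T[n_i])$. The sorting key of $n_i$ is the pair $\bigl(c(n_i),\,\mbox{L}\mathcal{D}_A(T[n_i])\bigr)$, ordered by the usual order on $\mathbb{N}$ in the first coordinate and by the (lexicographic, hence \emph{total}) order on nested arrays described just above Algorithm~\ref{Alg:GetMinVert} in the second; because this is a total order, the only remaining freedom in the output of $sort\_siblings$ is the relative position of children whose keys are \emph{equal}. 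But equal keys mean $c(n_i) = c(n_j)$ and $\mbox{L}\mathcal{D}_A(T[n_i]) = \mbox{L}\mathcal{D}_A(T[n_j])$, so the blocks $[c(n_i)] + \mbox{L}\mathcal{D}_A(T[n_i])$ and $[c(n_j)] + \mbox{L}\mathcal{D}_A(T[n_j])$ are literally identical, and swapping them leaves the concatenation unchanged. Hence $\mbox{L}\mathcal{D}_A(T[v])$ is determined by the sorted multiset of blocks, which is determined by $T$. This simultaneously handles the arbitrary order of $children[v]$ (it is merely the input to $sort\_siblings$) and the choice of $B$: the reverse-BFS loop in Algorithm~\ref{Alg:DirectDef} only ever reads $\mbox{L}\mathcal{D}_A[n]$ for descendants $n$ of the current vertex, and every breadth-first ordering places a vertex before all its descendants, so every admissible $B$ produces the same dictionary entries.

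To conclude, I would recall that $\overline{\mbox{L}\mathcal{D}}(T)$ is by definition $\mbox{L}\mathcal{D}_A(T')$ for the arborescence $T'$ obtained by attaching an artificial root above $r$, equivalently $\overline{\mbox{L}\mathcal{D}}(T) = [[\,c_T(r)\,]] + \mbox{L}\mathcal{D}_A(T)$; the colour assigned to the artificial root is irrelevant since it never enters $\mbox{L}\mathcal{D}_A(T')$, so the lemma gives that $\overline{\mbox{L}\mathcal{D}}$ is well-defined on $\mathcal{T}_{\mathbb{N}}$, and the first paragraph finishes the proof. I expect the only genuine obstacle to be the bookkeeping in the inductive step — in particular making the chain ``equal keys $\Rightarrow$ identical blocks $\Rightarrow$ concatenation unchanged'' fully rigorous, which includes the small but necessary observation that the comparison on (nested) arrays used by the algorithm is a total order, so that ``neither $<$ nor $>$'' really does force equality of the keys.
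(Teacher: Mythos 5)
Your proof is correct, and it does strictly more than the paper's. The paper's entire proof is the two-sentence observation you make in your first paragraph: the relation induced by equality of $\overline{\mbox{L}\mathcal{D}}$-values is reflexive and symmetric ``clearly,'' and transitive by transitivity of equality of arrays. The paper never addresses the point you correctly identify as the only substantive one, namely that $\overline{\mbox{L}\mathcal{D}}$ is a well-defined function on $\mathcal{T}_{\mathbb{N}}$ despite the arbitrary choices inside Algorithms \ref{Alg:GetMinVert}--\ref{Alg:DirectDef} (the breadth-first ordering $B$, the listing order in $children$, and the unresolved ties when two siblings have equal color and equal $\mbox{L}\mathcal{D}_A$ arrays). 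The paper instead handles this informally in the discussion around Figure \ref{Fig:PruferAltAutoExample}, arguing that unbroken ties correspond to automorphisms and so do not change the resulting code, but it never folds that argument into the proof of this proposition. Your inductive lemma --- equal sorting keys force literally identical blocks, so any two admissible outputs of $sort\_siblings$ concatenate to the same array --- is the rigorous version of that informal claim, and the observation that the comparison on nested arrays is a total order (so ``neither $<$ nor $>$'' forces key equality) is exactly the point that needs to be made. In short: the paper buys brevity by treating the proposition as purely formal; your route buys an actual guarantee that the object being quotiented by is well-defined, which the formal argument silently presupposes.
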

\begin{proof}
Clearly $\overline{\mbox{L}\mathcal{D}}_L$ defines a reflexive and symmetric relation. Transitivity is given by the transitivity of equality for arrays. 
\end{proof}

\begin{prop}
The $\overline{\mbox{L}\mathcal{D}}_L$ equivalence relation defined on $\mathcal{T}_{\mathbb{N}}$ is the same as the equivalence relation defined by isomorphism as vertex-colored arborescence. In other words, if $T, T' \in \mathcal{T}_{\mathbb{N}}$, then $T$ and $T'$ are isomorphic as vertex-colored arborescences if and only if the arrays $\overline{\mbox{L}\mathcal{D}_A}(T)$ and $\overline{\mbox{L}\mathcal{D}_A}(T')$ are identical.
\end{prop}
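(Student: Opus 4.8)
The plan is to prove both directions of the biconditional, using the recursive structure of $\overline{\mbox{L}\mathcal{D}_A}$ and an induction on the number of vertices.

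\textbf{Forward direction.} Suppose $T$ and $T'$ are isomorphic as vertex-colored arborescences, and let $\eta : V(T) \to V(T')$ be such an isomorphism; note $\eta$ carries the root of $T$ to the root of $T'$ and preserves colors and the parent-child relation. I would show by induction on $|V(T)|$ that $\mbox{L}\mathcal{D}_A(T[v]) = \mbox{L}\mathcal{D}_A(T'[\eta(v)])$ for every $v$, from which the claim for $\overline{\mbox{L}\mathcal{D}_A}$ follows by prepending $[[c_T(r)]] = [[c_{T'}(\eta(r))]]$. The base case is a leaf, where both sides equal $[[\,]]$ by definition. For the inductive step at a non-leaf $v$, the children of $v$ are in bijection (via $\eta$) with the children of $\eta(v)$; by induction each corresponding pair has equal $\mbox{L}\mathcal{D}_A$, and each pair has equal color. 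Since Algorithm \ref{Alg:Sort} (via Algorithm \ref{Alg:GetMinVert}) orders siblings by the pair (color, $\mbox{L}\mathcal{D}_A$ of subtree) using a total preorder, the sorted sequences of children of $v$ and of $\eta(v)$ agree position-by-position in both color and subtree-$\mbox{L}\mathcal{D}_A$. The key point, which I would state explicitly, is that even if two siblings tie on (color, subtree-$\mbox{L}\mathcal{D}_A$) and the tie is broken arbitrarily, the resulting concatenation $\mbox{L}\mathcal{D}_A(T[v])$ is unchanged by the choice, because swapping two such siblings only permutes equal blocks of the concatenation — this is the formal version of the remark surrounding Figure \ref{Fig:PruferAltAutoExample}. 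Hence $\mbox{L}\mathcal{D}_A(T[v]) = \mbox{L}\mathcal{D}_A(T'[\eta(v)])$.

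\textbf{Reverse direction.} Suppose $\overline{\mbox{L}\mathcal{D}_A}(T) = \overline{\mbox{L}\mathcal{D}_A}(T')$. Here I would invoke Algorithm \ref{Alg:RecomputeTree}: since that algorithm is a deterministic function of the array $\overline{\mbox{L}\mathcal{D}_A}$, equal inputs produce equal outputs, i.e., the same $children$ dictionary and the same coloring $c$ (on the common vertex label set $\{0,\dots,n-1\}$). It therefore suffices to show that $reconstruct\_tree(\overline{\mbox{L}\mathcal{D}_A}(T), \cdot)$ returns (a relabeled copy of) $T$ with its coloring; then $T$ and $T'$ are both isomorphic to this common reconstructed tree, hence to each other. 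This is essentially the assertion, made in the text, that $T$ together with $c_T$ can be recovered from $\overline{\mbox{L}\mathcal{D}_A}(T)$. I would prove it by tracing the recursive structure: the first block $[[c_T(r)]]$ of $\overline{\mbox{L}\mathcal{D}_A}(T)$ records the root color, and the remaining blocks are exactly $\mbox{L}\mathcal{D}_A(T)$, whose defining recursion (color of child $n$, followed by $\mbox{L}\mathcal{D}_A(T[n])$, for children $n$ in sorted order) matches the pointer-stack bookkeeping in Algorithm \ref{Alg:RecomputeTree}; an induction on $|V(T)|$ confirms that the reconstructed parent-child relation and colors agree with those of $T$ up to the canonical relabeling by order of first appearance.

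\textbf{Main obstacle.} The delicate step is the forward direction's handling of ties: I must argue that the arbitrary tiebreaking in Algorithm \ref{Alg:GetMinVert} does not affect $\mbox{L}\mathcal{D}_A(T[v])$. The clean way is a lemma: if $v$ has children $w_1, \dots, w_k$ and two of them, say $w_i, w_j$, satisfy $c_T(w_i) = c_T(w_j)$ and $\mbox{L}\mathcal{D}_A(T[w_i]) = \mbox{L}\mathcal{D}_A(T[w_j])$, then the contribution of $w_i$ to the concatenation (namely $[c_T(w_i)]$ followed by the entries of $\mbox{L}\mathcal{D}_A(T[w_i])$) is literally identical to that of $w_j$; therefore any two sibling-orderings consistent with the (color, subtree-$\mbox{L}\mathcal{D}_A$) preorder yield the same concatenation. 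Combined with the observation that sorting by a total preorder is unique up to permutation within tie-classes, this closes the gap. Everything else is a routine structural induction, and the reverse direction is largely a matter of carefully reading off that Algorithm \ref{Alg:RecomputeTree} inverts the construction.
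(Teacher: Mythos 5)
Your proposal is correct and follows essentially the same route as the paper: a structural argument that a vertex-colored isomorphism forces equality of the arrays subtree-by-subtree for the forward direction, and an appeal to Algorithm \ref{Alg:RecomputeTree} as a deterministic inverse for the reverse direction. The paper's own proof is only a two-line sketch, so your added detail — in particular the explicit lemma that arbitrary tie-breaking among siblings with equal color and equal $\mbox{L}\mathcal{D}_A$ leaves the concatenation unchanged — supplies exactly the justification the paper leaves implicit.
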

\begin{proof}
$(\Rightarrow):$ Suppose $T$ and $T'$ are isomorphic as vertex-colored arborescences, and let $\psi : V(T) \to V(T')$ define the isomorphism. Let $v \in V(T)$ and $v' \in V(T')$ so that $v' = \psi(v)$. Since $\psi$ is a vertex-colored isomorphism, $\overline{\mbox{L}\mathcal{D}_A}(T[v]) = \overline{\mbox{L}\mathcal{D}_A}(T'[v'])$, and the desired result follows. 

$(\Leftarrow):$ Suppose instead that $\overline{\mbox{L}\mathcal{D}_A}(T) = \overline{\mbox{L}\mathcal{D}_A}(T')$. Then the output of Algorithm \ref{Alg:RecomputeTree} is the same for both $T$ and $T'$.
\end{proof}

\begin{prop}\label{Thm:TotalOrderEquivRelation}
The ordering of $\{\overline{\mbox{L}\mathcal{D}_A}(T)\}_{T \in \mathcal{T}_{\mathbb{N}}}$ defined within Algorithm \ref{Alg:GetMinVert} induces a total ordering on the equivalence classes of $\mathcal{T}_{\mathbb{N}}$ under the $\overline{\mbox{L}\mathcal{D}}_L$ relation.
\end{prop}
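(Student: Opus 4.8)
The plan is to reduce the statement to the standard fact that lexicographic order (with the convention that a proper prefix precedes its extensions) is a total order, and then transport that order along the injection supplied by the preceding propositions.

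First I would observe that, by the very definition of the relation $\overline{\mbox{L}\mathcal{D}}_L$, two trees $S,T\in\mathcal{T}_{\mathbb{N}}$ lie in the same class precisely when $\overline{\mbox{L}\mathcal{D}_A}(S)$ and $\overline{\mbox{L}\mathcal{D}_A}(T)$ are the same array. Hence the assignment $[T]\mapsto\overline{\mbox{L}\mathcal{D}_A}(T)$ is a well-defined and injective map $\Phi$ from the set of $\overline{\mbox{L}\mathcal{D}}_L$-classes into the set $\mathcal{A}$ of finite arrays whose entries are themselves finite arrays of elements of $\mathbb{N}$ (here we use that $\overline{\mbox{L}\mathcal{D}_A}$ is unaffected by the arbitrary tie-breaking allowed in Algorithm~\ref{Alg:GetMinVert}, as discussed earlier, so that it is genuinely a function on $\mathcal{T}_{\mathbb{N}}$). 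It therefore suffices to show that the comparison $<$ defined just before Algorithm~\ref{Alg:GetMinVert} restricts to a strict total order on $\mathcal{A}$, since any subset of a totally ordered set is totally ordered by the restriction; one then sets $[S]\preceq[T]$ iff $\Phi([S])<\Phi([T])$ or $\Phi([S])=\Phi([T])$, and injectivity of $\Phi$ makes $\preceq$ a total order on the classes.

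The core lemma I would isolate is: for any strictly totally ordered set $(A,<_A)$, the set $A^{\ast}$ of finite sequences over $A$, ordered by declaring $x<y$ iff there is an index $i$ with $x[j]=y[j]$ for all $j<i$ and either $x[i]<_A y[i]$, or $x$ has length $i$ while $y[i]$ is defined, is a strict total order. Irreflexivity is immediate, since no index can witness $x<x$. For trichotomy, given $x\neq y$ I would take the least index $i$ at which $x$ and $y$ differ or at which exactly one of them terminates, and apply $<_A$-trichotomy at coordinate $i$ when both entries exist, or otherwise read off which sequence is the shorter prefix. Transitivity I would prove by a short case analysis on whether each of $x<y$ and $y<z$ is witnessed by a genuine coordinate inequality or by the prefix clause, in each case exhibiting the witnessing index for $x<z$ as the minimum of the two given indices; antisymmetry then follows from irreflexivity and transitivity. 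Applying this lemma with $A=\mathbb{N}$ shows that the inner comparison of colour-arrays is a strict total order, and applying it a second time with $A$ equal to that ordered set of colour-arrays shows that $<$ is a strict total order on $\mathcal{A}$, which is exactly what the reduction required.

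I expect the only genuinely fiddly step to be the transitivity case analysis in the lemma, in particular the mixed cases where one of the two hypotheses comes from a coordinate inequality and the other from the proper-prefix clause; these require a little care to confirm that the combined witness index is indeed the minimum of the two and that the relevant entry comparison or termination fact propagates correctly. Everything else — well-definedness and injectivity of $\Phi$, the reduction to $\mathcal{A}$, and the transport of the order through an injection — is routine. It is worth remarking explicitly in the write-up that the recursion defining $<$ terminates because $\overline{\mbox{L}\mathcal{D}_A}$-arrays have nesting depth exactly two, so the two applications of the lemma above genuinely suffice and no separate well-foundedness argument is needed.
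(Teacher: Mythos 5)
Your proposal is correct and follows essentially the same route as the paper, which simply declares $T_1 < T_2$ iff $\overline{\mbox{L}\mathcal{D}_A}(T_1) < \overline{\mbox{L}\mathcal{D}_A}(T_2)$ under the array comparison defined before Algorithm~\ref{Alg:GetMinVert} and leaves the verification implicit. You supply the details the paper omits — well-definedness and injectivity of the class-to-array map, and the double application of the lexicographic-order-with-prefix-convention lemma — which is a welcome completion rather than a different argument.
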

\begin{proof}
The total ordering of isomorphism classes under $\overline{\mbox{L}\mathcal{D}_A}$ defines $T_1 < T_2$ if and only if $\overline{\mbox{L}\mathcal{D}_A}(T_1) < \overline{\mbox{L}\mathcal{D}_A}(T_2)$ for vertex colored arborescences $T_1, T_2 \in \mathcal{T}_{\mathbb{N}}$, where ``$<$'' for $\overline{\mbox{L}\mathcal{D}_A}$ is defined directly before Algorithm \ref{Alg:GetMinVert}. 
\end{proof}

\begin{cor}
The ordering of $\{\overline{\mbox{L}\mathcal{D}_A}(T)\}_{T \in \mathcal{T}_{\mathbb{N}}}$ defined within Algorithm \ref{Alg:GetMinVert} induces a pre-ordering $<_{\mathcal{T}_{\mathbb{N}}}$ on $\mathcal{T}_{\mathbb{N}}$.
\end{cor}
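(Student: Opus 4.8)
The plan is to exhibit $<_{\mathcal{T}_{\mathbb{N}}}$ as the pullback of the total order on equivalence classes supplied by Proposition \ref{Thm:TotalOrderEquivRelation} along the canonical map $T \mapsto \overline{\mbox{L}\mathcal{D}_A}(T)$, and then to observe that the pullback of a total order along an arbitrary map is automatically a preorder. Concretely, for $S, T \in \mathcal{T}_{\mathbb{N}}$ I would declare $S \leq_{\mathcal{T}_{\mathbb{N}}} T$ (writing $\leq_{\mathcal{T}_{\mathbb{N}}}$ for the reflexive version of the relation denoted $<_{\mathcal{T}_{\mathbb{N}}}$ in the statement) to hold exactly when $\overline{\mbox{L}\mathcal{D}_A}(S) < \overline{\mbox{L}\mathcal{D}_A}(T)$ or $\overline{\mbox{L}\mathcal{D}_A}(S) = \overline{\mbox{L}\mathcal{D}_A}(T)$, where $<$ on arrays of arrays is the comparison defined immediately before Algorithm \ref{Alg:GetMinVert}. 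Equivalently, $S \leq_{\mathcal{T}_{\mathbb{N}}} T$ iff the $\overline{\mbox{L}\mathcal{D}}_L$-class of $S$ precedes or equals that of $T$ in the total order of Proposition \ref{Thm:TotalOrderEquivRelation}.

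With the relation in place, I would verify the two defining properties of a preorder. Reflexivity is immediate from $\overline{\mbox{L}\mathcal{D}_A}(T) = \overline{\mbox{L}\mathcal{D}_A}(T)$. For transitivity, if $R \leq_{\mathcal{T}_{\mathbb{N}}} S$ and $S \leq_{\mathcal{T}_{\mathbb{N}}} T$, then the class of $R$ is at most the class of $S$, which is at most the class of $T$, in the total order on classes; transitivity of that total order yields $R \leq_{\mathcal{T}_{\mathbb{N}}} T$. If one prefers to sidestep Proposition \ref{Thm:TotalOrderEquivRelation} here, the same step can instead be checked by hand: the array comparison $<$ is transitive, and the relation ``$<$ or $=$'' is transitive, by a short case analysis on the first index at which the arrays in question differ.

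Finally I would explain why the structure is genuinely a preorder rather than a partial order: antisymmetry fails. By the preceding proposition, $\overline{\mbox{L}\mathcal{D}_A}(S) = \overline{\mbox{L}\mathcal{D}_A}(T)$ precisely when $S$ and $T$ are isomorphic as vertex-colored arborescences, and $\mathcal{T}_{\mathbb{N}}$ contains many distinct but isomorphic arborescences; for any such pair one has $S \leq_{\mathcal{T}_{\mathbb{N}}} T$ and $T \leq_{\mathcal{T}_{\mathbb{N}}} S$ with $S \neq T$. Hence $\leq_{\mathcal{T}_{\mathbb{N}}}$ is reflexive and transitive but not antisymmetric, which is exactly what it means to be a preorder. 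I do not expect a real obstacle; the only points requiring care are keeping the strict array order $<$ (whose associated equivalence is array equality, i.e.\ $\overline{\mbox{L}\mathcal{D}}_L$) notationally distinct from the reflexive relation actually christened a preorder, and confirming that array comparison is total on the image $\{\overline{\mbox{L}\mathcal{D}_A}(T)\}_{T \in \mathcal{T}_{\mathbb{N}}}$ so that ``precedes or equals'' is unambiguous.
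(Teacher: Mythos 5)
Your proposal matches the paper's proof: both define $T_1 \leq_{\mathcal{T}_{\mathbb{N}}} T_2$ by pulling back the array comparison via $T \mapsto \overline{\mbox{L}\mathcal{D}_A}(T)$ and observe that reflexivity and transitivity follow immediately. Your added remarks on the failure of antisymmetry and on keeping the strict and reflexive relations distinct are fine elaborations of what the paper leaves implicit, but the route is the same.
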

\begin{proof}
Extend the relation given in the proof of Proposition \ref{Thm:TotalOrderEquivRelation} so that $T_1 \leq T_2$ if and only if $\overline{\mbox{L}\mathcal{D}_A}(T_1) \leq \overline{\mbox{L}\mathcal{D}_A}(T_2)$ for vertex colored arborescences $T_1$ and $T_2$, which is clearly a reflexive and transitive relation on $\mathcal{T}_\mathbb{N}$. 
\end{proof}

\begin{prop}
Let $T \in \mathcal{T}_\mathbb{N}$. Then there exists a total ordering $<_T$ of $V(T)$ so that $<_{T}$ is a depth-first search of $V(T)$ and for any two siblings $u$ and $v$, if $u <_T v$, then $T[u] \leq_{\mathcal{T}_{\mathbb{N}}} T[v]$.
\end{prop}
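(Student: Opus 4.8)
The plan is to let $<_T$ be the Lexicographic Depth First Search order $\mbox{L}\mathcal{D}(T)$ described above Figure \ref{Fig:ColorInducedLabelExample} (i.e.\ the labeling $\phi$), and then to verify the three asserted properties in turn. First I would make the construction explicit and check it is well defined. Run a depth-first search from the root $r$: on reaching a vertex $v$ whose children in $T$ are $w_1,\dots,w_k$, apply Algorithm \ref{Alg:Sort} (which iterates Algorithm \ref{Alg:GetMinVert}) to obtain an ordering $u_1,\dots,u_k$ of the $w_j$'s, recurse into $T[u_1],\dots,T[u_k]$ in that order, and append the resulting sublists after $v$. The comparison inside Algorithm \ref{Alg:GetMinVert} orders siblings first by $c_T$ and, within a color, by $\mbox{L}\mathcal{D}_A(T[\cdot])$; any ties still remaining are broken using a fixed arbitrary total order on $V(T)$, so that a genuine linear sequence results. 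This is well defined because, by Proposition \ref{Thm:TotalOrderEquivRelation} (and the Corollary that follows it, which introduces $\leq_{\mathcal{T}_{\mathbb{N}}}$), the $\overline{\mbox{L}\mathcal{D}}_L$-classes are totally ordered by $\overline{\mbox{L}\mathcal{D}_A}$, so the finite family $\{T[w_1],\dots,T[w_k]\}$ can always be listed so that the values $\overline{\mbox{L}\mathcal{D}_A}(T[u_i])$ are nondecreasing.

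The second step is the short bridge between the comparison used inside Algorithm \ref{Alg:GetMinVert} and the relation $\leq_{\mathcal{T}_{\mathbb{N}}}$ appearing in the statement. Because $\overline{\mbox{L}\mathcal{D}_A}(T[v]) = [[c_T(v)]] + \mbox{L}\mathcal{D}_A(T[v])$, comparing $\overline{\mbox{L}\mathcal{D}_A}(T[u])$ with $\overline{\mbox{L}\mathcal{D}_A}(T[v])$ amounts to first comparing $c_T(u)$ with $c_T(v)$ and, on a tie, comparing $\mbox{L}\mathcal{D}_A(T[u])$ with $\mbox{L}\mathcal{D}_A(T[v])$, which is exactly the rule in Algorithm \ref{Alg:GetMinVert}. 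Hence, among siblings, ``$u$ occurs before $v$ in the sorted child list'' implies $\overline{\mbox{L}\mathcal{D}_A}(T[u]) \le \overline{\mbox{L}\mathcal{D}_A}(T[v])$, i.e.\ $T[u] \leq_{\mathcal{T}_{\mathbb{N}}} T[v]$; since the order on arrays includes equality, ties broken arbitrarily do no harm to this implication.

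For the structural properties I would argue by induction on $|V(T)|$ (or on subtree depth) that the recursion visits every vertex exactly once and that, for each $v$, the vertices of $T[v]$ occupy a contiguous block of the resulting sequence whose first element is $v$. Since the output is one finite sequence, $<_T$ is a total order on $V(T)$; since it is literally the visitation order of a depth-first search, it is a depth-first search of $V(T)$; and the block structure just established is what makes this precise. The sibling condition then follows: if $u$ and $v$ are children of a common vertex $p$ with $u <_T v$, then by contiguity the block of $T[u]$ (beginning at $u$) precedes the block of $T[v]$ (beginning at $v$) inside the block of $T[p]$, so $u$ precedes $v$ in $p$'s sorted child list, and the second step gives $T[u] \leq_{\mathcal{T}_{\mathbb{N}}} T[v]$.

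I expect the argument to be almost entirely bookkeeping, since the machinery of $\mbox{L}\mathcal{D}_A$, Algorithms \ref{Alg:GetMinVert}--\ref{Alg:DirectDef}, and the preceding propositions is already in place. The two points needing care are (i) invoking Proposition \ref{Thm:TotalOrderEquivRelation} and its corollary so that the sort in Algorithm \ref{Alg:Sort} is meaningful for an arbitrary sibling family, and (ii) the inductive proof that the recursion produces contiguous subtree blocks, which is what turns ``$<_T$ is a depth-first search'' into a verifiable statement. I anticipate (ii) to be the main, though modest, obstacle — essentially pinning down the definition of ``depth-first search of $V(T)$'' and matching it to the block structure of the traversal.
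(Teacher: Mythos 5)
Your proposal is correct and follows exactly the paper's route: the paper's entire proof is the one-line observation that the ordering given by $\mbox{L}\mathcal{D}$ works. Your additional bookkeeping — matching the comparison in Algorithm \ref{Alg:GetMinVert} to $\leq_{\mathcal{T}_{\mathbb{N}}}$ via $\overline{\mbox{L}\mathcal{D}_A}$, and the contiguous-block argument for the depth-first property — is a sound and more explicit verification of what the paper leaves implicit.
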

\begin{proof}
One such well ordering is given by $\mbox{L}\mathcal{D}$.
\end{proof}

\begin{prop}\label{Thm:UniqueLabel}
Let $T \in \mathcal{T}_\mathbb{N}$ have totally ordered vertex set $V(T)$ given by $<_V$. Then there exists a \underline{unique} total ordering $<_T$ of $V(T)$ so that $<_{T}$ is a depth-first search of $T$ and for any two siblings $u$ and $v$, $u <_T v$ if and only if $T[u] <_{\mathcal{T}_{\mathbb{N}}} T[v]$ or $T[u] =_{\mathcal{T}_{\mathbb{N}}} T[v]$ and $u <_V v$.
\end{prop}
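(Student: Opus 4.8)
The plan is to handle existence and uniqueness separately, with the real content living in the uniqueness half. For a non-leaf vertex $v$ write $C(v)$ for its set of children, and define a relation $\prec_v$ on $C(v)$ by: $u \prec_v w$ exactly when $T[u] <_{\mathcal{T}_{\mathbb{N}}} T[w]$, or $T[u] =_{\mathcal{T}_{\mathbb{N}}} T[w]$ and $u <_V w$. I would first check that $\prec_v$ is a strict total order on $C(v)$. Pulling the strict total order on $\overline{\mbox{L}\mathcal{D}}_L$-classes from Proposition~\ref{Thm:TotalOrderEquivRelation} back along $u \mapsto T[u]$ gives a strict weak order on $C(v)$ whose ties are precisely the groups of siblings with mutually isomorphic subtrees; on each such tie-group $<_V$ is a strict total order because $<_V$ totally orders all of $V(T)$. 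The lexicographic combination of a strict weak order with a strict total order that refines each of its tie-classes is again a strict total order, and verifying irreflexivity, transitivity and trichotomy is routine.

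For existence, run the depth-first search of $T$ from the root $r$ that, on reaching a vertex $v$, recurses into the subtrees $T[u]$ in $\prec_v$-increasing order of $u \in C(v)$ (this is essentially $\mbox{L}\mathcal{D}$ from the previous proposition, with the remaining ties broken by $<_V$); let $<_T$ be the order in which vertices are first visited. Each vertex is visited exactly once, so $<_T$ is a total order on $V(T)$; by construction $r$ is $<_T$-minimal and, for every $v$, the block of $T[v]$ is a contiguous interval of $<_T$ with minimum $v$, so $<_T$ is a depth-first search of $T$. If $u,v \in C(p)$ are siblings, then $T[u]$ and $T[v]$ are disjoint contiguous blocks traversed in $\prec_p$-order, so $u <_T v$ iff $u \prec_p v$, which is exactly the stated condition.

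For uniqueness, let $<$ be \emph{any} total order on $V(T)$ that is a depth-first search of $T$ and satisfies the sibling condition; I claim the truth value of $x < y$ is forced for every pair of distinct vertices. From ``depth-first search of $T$'' I use two facts: an ancestor precedes all of its descendants, and each $T[v]$ is a contiguous block of the order. Given distinct $x,y$, let $w$ be their deepest common ancestor. If $w=x$, then $y$ is a proper descendant of $x$ and $x<y$; symmetrically if $w=y$. Otherwise $x \in T[u]$, $y \in T[v]$ for distinct children $u,v$ of $w$, and $T[u], T[v]$ are disjoint contiguous blocks; two disjoint contiguous blocks of a finite total order are linearly separated (if $a_1 < b < a_2$ with $a_1,a_2$ in one block and $b$ in the other, contiguity of the first block forces $b$ into it, a contradiction), so $x<y$ iff $u<v$, and $u<v$ iff $u \prec_w v$ by the sibling condition. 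Since $\prec_w$ was defined without reference to $<$, the value of $x<y$ depends only on $T$, $<_V$, and the fixed ordering $<_{\mathcal{T}_{\mathbb{N}}}$. Hence any two orders meeting the hypotheses agree on every pair, so they coincide, and combined with the previous paragraph this proves $<_T$ exists and is unique.

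The main obstacle is extracting the operational content of ``$<_T$ is a depth-first search of $T$'': the uniqueness argument rests on the two structural facts above together with the elementary observation that disjoint contiguous blocks of a total order cannot interleave. Everything else---that $\prec_v$ is a total order and that the recursive construction produces an order with the advertised properties---is bookkeeping.
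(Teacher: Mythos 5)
Your proof is correct, and its existence half is exactly the paper's entire argument: the paper's proof consists of the single sentence that the ordering is given by $\mbox{L}\mathcal{D}$ with Algorithm \ref{Alg:GetMinVert} amended so that siblings $u,v$ with $\overline{\mbox{L}\mathcal{D}_A}(T[u]) = \overline{\mbox{L}\mathcal{D}_A}(T[v])$ are ordered by $<_V$, which is precisely your $\prec_v$-ordered recursive traversal. Where you go beyond the paper is the uniqueness half, which the paper asserts (it underlines ``unique'' in the statement) but never argues. Your argument for it is the right one and is genuinely needed: you extract from ``$<_T$ is a depth-first search'' the two operational facts (ancestors precede descendants; each $T[v]$ occupies a contiguous block), observe that disjoint contiguous blocks of a total order cannot interleave, and conclude that the relative position of any pair $x,y$ is forced either by the ancestor relation or by the $\prec_w$-order of the children of their deepest common ancestor --- none of which depends on the candidate order itself. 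The one caveat, which you correctly flag, is that the paper never formally defines what it means for a total order to ``be a depth-first search of $T$,'' so your uniqueness proof rests on supplying that definition; with the natural reading you adopt, the argument is complete, and your write-up is strictly more rigorous than the paper's.
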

\begin{proof}
The ordering is defined by $\mbox{L}\mathcal{D}$, provided the sorting algorithm in Algorithm \ref{Alg:GetMinVert} is amended so that if $u,v \in T$ are siblings with $\overline{\mbox{L}\mathcal{D}_A}(T[u]) = \overline{\mbox{L}\mathcal{D}_A}(T[v])$, then $u <_T v$ if and only if $u <_V v$. 
\end{proof}

The previous definition aims to distinguish between two vertex-colored arborescences whenever they are not isomorphic. The following result justifies this property of the labeling, but first we formalize the definition of a vertex-colored isomorphism. 

\begin{definition}
Let $G$ and $G'$ be two graphs. Then $\psi : G \to G'$ is a vertex-colored isomorphism when $\psi$ is a graph isomorphism (preserves adjacency and non-adjacency) so that the color of $v \in G$ matches the color of $\psi(v)$ in $G'$. 
\end{definition}

\begin{cor}\label{Prop:CompositionOfPhisIsIsomorphism}
Let $T$ and $T'$ be two vertex-colored arborescences with $\phi : V(T) \to \{0,\dotsc, |V(T)| - 1\}$ and $\phi' : V(T') \to \{0,\dotsc, |V(T')| - 1\}$ be the vertex labelings which represent the $\mbox{L}\mathcal{D}$ ordering. Then $T$ and $T'$ are isomorphic as vertex colored arborescences if and only if $\phi^{-1} \circ\phi'$ and $\phi'^{-1}\circ\phi$ are color preserving isomorphisms. 
\end{cor}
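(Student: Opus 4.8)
The plan is to reduce everything to the preceding proposition, which says that $\overline{\mbox{L}\mathcal{D}_A}$ is a complete invariant: two arborescences in $\mathcal{T}_{\mathbb{N}}$ are isomorphic as vertex-colored arborescences if and only if their $\overline{\mbox{L}\mathcal{D}_A}$-arrays are identical. First I would dispose of the trivial edge case: if $|V(T)|\neq|V(T')|$, then neither $\phi^{-1}\circ\phi'$ nor $\phi'^{-1}\circ\phi$ is a bijection between the indicated sets and $T$ is not isomorphic to $T'$, so both sides of the biconditional fail. Hence we may assume $|V(T)|=|V(T')|=n$, in which case $\phi'^{-1}\circ\phi:V(T)\to V(T')$ and $\phi^{-1}\circ\phi':V(T')\to V(T)$ are mutually inverse bijections. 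Since the inverse of a color-preserving isomorphism is again one, it suffices to prove that $T$ and $T'$ are isomorphic as vertex-colored arborescences if and only if $\phi'^{-1}\circ\phi$ is a color-preserving isomorphism. The $(\Leftarrow)$ direction is immediate from the definition: such a map is by definition a witness that $T\cong T'$ (and the argument is insensitive to which valid $\mbox{L}\mathcal{D}$-orderings are chosen as $\phi$, $\phi'$).

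For $(\Rightarrow)$, suppose $T\cong T'$ as vertex-colored arborescences. I would prove, by induction on $n$, the sharper statement that the rank-matching bijection $\phi'^{-1}\circ\phi$ is itself a color-preserving arborescence isomorphism $T\to T'$. The base case $n\le 1$ is trivial. For the inductive step, write $r=\phi^{-1}(0)$ and $r'=\phi'^{-1}(0)$; since the root of an arborescence is its unique vertex of in-degree $0$, any color-preserving isomorphism $T\to T'$ must send $r\mapsto r'$, so $c_T(r)=c_{T'}(r')$, and it restricts to a bijection $b$ between the children of $r$ and those of $r'$ with $c_T(n)=c_{T'}(b(n))$ and $T[n]\cong T'[b(n)]$ for every child $n$ of $r$.

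The key step is then to correct $b$ so that it respects $\mbox{L}\mathcal{D}$-order. Both $\mbox{L}\mathcal{D}$-orderings enumerate the children of their roots sorted by the key $(c(n),\mbox{L}\mathcal{D}_A(T[n]))$ used in Algorithm~\ref{Alg:GetMinVert}; by the preceding proposition this key agrees, for any two siblings, exactly when their subtrees are isomorphic as vertex-colored arborescences, and the possible key values are linearly ordered. Thus, writing $n_1<_{T}\cdots<_{T}n_k$ and $n'_1<_{T'}\cdots<_{T'}n'_k$ for the children of $r$ and of $r'$ in $\mbox{L}\mathcal{D}$-order, and using that $b$ preserves the pair (color, isomorphism class of subtree), one may re-choose $b$ inside each maximal block of siblings whose subtrees are mutually isomorphic so that $b(n_i)=n'_i$ for all $i$. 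Applying the inductive hypothesis to each pair $T[n_i]\cong T'[n'_i]$ yields color-preserving arborescence isomorphisms $\theta_i:V(T[n_i])\to V(T'[n'_i])$ that are rank-matching for the induced $\mbox{L}\mathcal{D}$-orderings of those subtrees. Gluing $r\mapsto r'$ together with the $\theta_i$ gives a bijection $\theta:V(T)\to V(T')$ (the subtrees $T[n_i]$ partition $V(T)\setminus\{r\}$) that sends edges to edges and preserves colors, hence is a color-preserving arborescence isomorphism; and because $\mbox{L}\mathcal{D}(T)$ visits $r$, then $T[n_1]$, then $T[n_2]$, and so on with the same rank-offsets that $\mbox{L}\mathcal{D}(T')$ assigns to $r',T'[n'_1],\dots$, and each $\theta_i$ is rank-matching, we get $\phi'\circ\theta=\phi$, i.e.\ $\theta=\phi'^{-1}\circ\phi$. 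Therefore $\phi'^{-1}\circ\phi$ is a color-preserving isomorphism, and so is its inverse $\phi^{-1}\circ\phi'$.

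The main obstacle is precisely this matching-correction step: promoting an arbitrary color-preserving isomorphism to one that also respects the $\mbox{L}\mathcal{D}$-ranks. It relies on the preceding proposition in both directions at once — isomorphic subtrees have equal $\overline{\mbox{L}\mathcal{D}_A}$ (so the $\mbox{L}\mathcal{D}$-sort is compatible with the abstract child-matching $b$) and equal $\overline{\mbox{L}\mathcal{D}_A}$ forces isomorphic subtrees (so the inductive hypothesis applies to each matched pair, and the only freedom in the sort is a permutation of siblings carrying mutually isomorphic subtrees, which extends to an isomorphism by induction). A minor point to record along the way is that the sort key $(c(n),\mbox{L}\mathcal{D}_A(T[n]))$ literally used in Algorithm~\ref{Alg:GetMinVert} orders siblings the same way as comparing the full arrays $\overline{\mbox{L}\mathcal{D}_A}(T[n])$, since the latter lists $c(n)$ as its first entry; everything else is routine bookkeeping about how a depth-first traversal concatenates the ranks of successive subtrees.
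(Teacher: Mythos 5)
Your proof is correct, and it lands on the same underlying fact as the paper -- that the canonical $\mbox{L}\mathcal{D}$ ordering of isomorphic arborescences is structurally identical -- but it gets there by a noticeably different and more detailed route. The paper's proof is a three-line appeal to the earlier proposition that $T\cong T'$ iff $\overline{\mbox{L}\mathcal{D}_A}(T)=\overline{\mbox{L}\mathcal{D}_A}(T')$, from which it simply \emph{asserts} that the two depth-first searches are ``identical'' and that corresponding vertices have matching sorted child-color arrays, hence the rank-matching map is an isomorphism. You instead prove that assertion by structural induction on $n$: take an arbitrary color-preserving isomorphism, observe it must match roots and induce a subtree-isomorphism-class-preserving bijection on children, then use the fact that the sort key of Algorithm~\ref{Alg:GetMinVert} separates siblings exactly up to isomorphism of their subtrees to re-choose the child matching so it respects the $\mbox{L}\mathcal{D}$ order, apply the inductive hypothesis to each matched pair, and glue. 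This buys rigor precisely where the paper is thinnest -- it makes explicit why ties among isomorphic siblings are harmless and why the rank offsets line up across the concatenated subtree blocks -- at the cost of length. One small caveat worth recording in your write-up: the paper's formal definition of vertex-colored isomorphism is stated for undirected adjacency, so your step ``any isomorphism must send $r\mapsto r'$'' implicitly uses that arborescence isomorphisms preserve edge orientation (equivalently, parent--child relations); the paper makes the same implicit assumption, so this is a shared convention rather than a gap in your argument.
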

\begin{proof}
This is a corollary to Proposition \ref{Thm:UniqueLabel}. The backwards implication is trivial. For the forward implication, $T$ and $T'$ isomorphic as vertex-colored arborescences implies $\overline{\mbox{L}\mathcal{D}_A}(T) = \overline{\mbox{L}\mathcal{D}_A}(T')$. Thus, the vertex-colored depth first searches of $T$ and $T'$ are identical, meaning $\phi^{-1} \circ \phi'$ preserves vertex colors. Furthermore, $\overline{\mbox{L}\mathcal{D}_A}(T) = \overline{\mbox{L}\mathcal{D}_A}(T')$ implies the sorted array of vertex colors of children of $\phi^{-1}(i)$ and $\phi'^{-1}(i)$ agree for any $0 \leq i \leq |V(T)| - 1$. Thus, $\phi^{-1} \circ\phi'$ and $\phi'^{-1}\circ\phi$ are color preserving isomorphisms.
\end{proof}

Now we formally define VCPCs. Based on the vertex-labeling defined by $\mbox{L}\mathcal{D}$ the following definition closely resembles the third tree encoding method presented by Neville (\cite{Nev1953}). 

\begin{definition}\label{Def:ColoredPruferCodes}
Let $T$ be an arborescence on $n$ vertices. Let $\phi: V(T) \to \{0,\dotsc, |V(T)| - 1\}$ be the vertex labeling which encodes $\mbox{L}\mathcal{D}(T)$. Then, the \textbf{VCPC} for $T$ is a $2 \times n$ array of entries so the first row contains the standard Pr\"{u}fer code on $n-1$ symbols (we prune a vertex even when only two remain), where leaves are vertices with out-degree zero. The second row contains the color of the vertex pruned at each step. In the last column, record $\emptyset$ in the first row and the color of the root of $T$ in the second row. 
\end{definition}

Let $P$ denote the VCPC for $T$. Then the standard Pr\"{u}fer code with $n-2$ symbols on that vertex labeling is the same as the first $n-2$ symbols in the first row of $P$. As a result, the isomorphism provided by Pr\"{u}fer codes on labeled trees applies to the augmented Pr\"{u}fer code we define in the first row of $P$. We use this fact throughout. 

The following figure illustrates an example of how to construct the VCPC, where an arbitrary drawing of $T$ is shown on the left, the drawing which realizes $\mbox{L}\mathcal{D}(T)$ with vertex labeling $\phi$ is shown on the right, and the constructed Pr\"{u}fer code is listed underneath. 

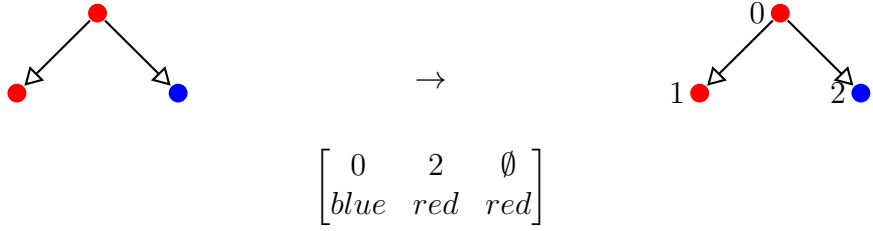
\begin{figure}[H]
\centering
\begin{minipage}{0.45\textwidth}
\centering
\begin{tikzpicture}[node distance = 15mm, thick, -open triangle 60, main/.style = {fill, circle, inner sep = 2.5pt}]
      \node[main,red] (1) {};
      \node[main,red] (2) [below left of=1] {};
      \node[main,blue] (3) [below right of=1] {};

      \draw (1) -- (2);
      \draw (1) -- (3);
\end{tikzpicture}
\end{minipage}
\begin{minipage}{0.05\textwidth}
$$
\rightarrow
$$
\end{minipage}
\begin{minipage}{0.45\textwidth}
\centering
\begin{tikzpicture}[node distance = 15mm, thick, -open triangle 60, main/.style = {fill, circle, inner sep = 2.5pt}]
\node[main,red] (1) {};
\node[main,red] (2) [below left of=1] {};
\node[main,blue] (3) [below right of=1] {};
\node[] [left of=1, node distance = 3mm] {0};
\node[] [left of=2, node distance = 3mm] {1};
\node[] [left of=3, node distance = 3mm] {2};

\draw (1) -- (2);
\draw (1) -- (3);
\end{tikzpicture}
\end{minipage}
$$
\begin{bmatrix}
0&2&\emptyset \\
blue&red&red
\end{bmatrix}
$$
\caption{\textit{VCPC example}}\label{Fig:SmallPruferExample}
\end{figure}


Throughout the rest of the manuscript, we adopt the following notational conventions. We let $T$ and $T'$ denote vertex-colored arborescences on $n$ and $n'$ vertices respectively where $n'\leq n$. Let $r \in V(T)$ and $r' \in V(T')$ be the roots. Use $c : V(T) \to \mathbb{N}$ and $c' : V(T') \to \mathbb{N}$ to denote the colors of $V(T)$ and $V(T')$ respectively. Furthermore, let $\phi$ and $\phi'$ be the vertex labelings which preserve $\mbox{L}\mathcal{D}(T)$ and $\mbox{L}\mathcal{D}(T')$ respectively. Let $P$ and $P'$ be the VCPC for $T$ and $T'$ respectively. Furthermore, we let $\{p_i\}_{i=0}^{n-1}$ and $\{p'_i\}_{i=0}^{n'-1}$ denote the first rows of $P$ and $P'$, while $\{c_i\}_{i=0}^{n-1}$ and $\{c_i'\}_{i=0}^{n'-1}$ denote the second rows of $P$ and $P'$. Lastly, let $v_i \in V(T)$ and $v_i' \in V(T')$ be the vertex pruned at the $i$th step of the construction of $P$ and $P'$ respectively, with $u_i$ the parent of $v_i$ for $i < n$ and $u_i'$ the parent of $v_i'$ for $i < n'$.

\section{Relationships Among Vertex-Colored Arborescences}\label{Sec:CompClassify}

Here we present the formal statements and proofs of isomorphism results in relation to VCPCs. We start with one useful definition. 

\begin{definition}\label{Def:BranchPartition}
Let $T$ be an arborescence. Let $u \in V(T)$ and $b \in \N$ so that $\{v_i\}_{i=0}^{b-1}$ are the children of $u$. Then the \textbf{branch partition} of $T[u]$, denoted $\mathcal{B}_u = \{B_i\}_{i=0}^{b-1}$ is the partition of $V(T[u]) \setminus \{u\}$ so that $B_i = T[v_i]$. 
\end{definition}

\begin{definition}\label{Def:Shape}
Let $X = (x_n)_{n=0}^N$ be a finite sequence of real numbers so that $|\{x_n\}_{n=0}^N| = m$. Then the \textbf{shape} of $X$, denoted $s(X)$, is $s(X) = \{f(x_n)\}_{n=0}^N$, where $f : \{x_n\}_{n=0}^N \to \{0,\dotsc, m-1\}$ is the unique order-preserving bijection, i.e., the unique bijection satisfying $x_i < x_j$ if and only if $f(x_i) < f(x_j)$. 
\end{definition}


\begin{theorem}\label{Prop:VertexColoredPruferIsomorphism}
There exists a vertex-colored isomorphism between arborescences $T$ and $T'$ if and only if $P = P'$. 
\end{theorem}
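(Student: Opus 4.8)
The plan is to split the biconditional into its two implications, using Corollary~\ref{Prop:CompositionOfPhisIsIsomorphism} to translate between ``vertex-colored isomorphism of arborescences'' and ``the two $\mbox{L}\mathcal{D}$-labeled structures coincide'', and then to lean on the classical fact (recorded just after Definition~\ref{Def:ColoredPruferCodes}) that the ordinary Pr\"ufer code recovers a labeled tree.

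For the forward direction, suppose $T$ and $T'$ are isomorphic as vertex-colored arborescences. By Corollary~\ref{Prop:CompositionOfPhisIsIsomorphism} the map $\psi := \phi'^{-1}\circ\phi : V(T)\to V(T')$ is a color-preserving arborescence isomorphism, and by construction $\phi'\circ\psi = \phi$, so $\psi$ intertwines the two labelings. I would then prove, by induction on $n$, the slightly more general statement: whenever $\psi$ is a color-preserving arborescence isomorphism from $(T,\phi)$ to $(T',\phi')$ with $\phi'\circ\psi=\phi$, the code produced by the procedure of Definition~\ref{Def:ColoredPruferCodes} is the same for both. The base case $n=1$ is immediate, since both codes equal $\begin{bmatrix}\emptyset\\ c(r)\end{bmatrix}$ and $c(r)=c'(\psi(r))$. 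For the inductive step, $\psi$ preserves out-degree $0$ and the labeling, hence carries the minimum-$\phi$ leaf $v_0$ of $T$ to the minimum-$\phi'$ leaf $v_0'$ of $T'$ and its parent $u_0$ to $u_0'$; therefore $p_0=\phi(u_0)=\phi'(u_0')=p_0'$ and $c_0=c(v_0)=c'(v_0')=c_0'$, so the first columns agree. Deleting $v_0$ and $v_0'$ leaves a smaller instance of the same hypothesis (the restrictions of $\phi,\phi',\psi$ still satisfy everything, and the procedure never relabels), and the inductive hypothesis finishes it.

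For the backward direction, suppose $P=P'$. Equality of arrays forces equal dimensions, so $n=n'$. The first rows agree; as noted after Definition~\ref{Def:ColoredPruferCodes}, their initial $n-2$ entries are the ordinary Pr\"ufer code of $(T,\phi)$ and of $(T',\phi')$, and together with the convention that the last column is $\emptyset$ they determine the full pruning sequences $v_0,\dots,v_{n-2},r$ and $v_0',\dots,v_{n-2}',r'$. By injectivity of the Pr\"ufer encoding on labeled trees, the underlying labeled trees of $(T,\phi)$ and $(T',\phi')$ coincide, i.e.\ $\psi:=\phi'^{-1}\circ\phi$ is an isomorphism of the underlying (undirected) trees; moreover it is an arborescence isomorphism, since the root is exactly the vertex never pruned (the one recorded by the column whose first entry is $\emptyset$), so $\psi(r)=r'$, and reconstructing each edge from the parent recorded at the step its lower endpoint is pruned shows $\psi$ carries the orientation of $T$ onto that of $T'$. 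Equality of the first rows also forces $v_i'=\psi(v_i)$ for all $i$, and then equality of the second rows gives $c(v_i)=c'(v_i')=c'(\psi(v_i))$, i.e.\ $c = c'\circ\psi$. Hence $\psi$ is a vertex-colored isomorphism, completing the proof.

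The only real obstacle lies in the backward direction and is essentially bookkeeping: one must confirm that the particular Pr\"ufer variant of Definition~\ref{Def:ColoredPruferCodes} --- pruning out-degree-zero leaves until a single root vertex remains, in the spirit of Neville's third scheme --- is invertible in the \emph{rooted, oriented} sense, so that equality of first rows recovers the arborescence (root and edge directions), not merely an undirected labeled tree. Everything else reduces to the classical labeled-tree Pr\"ufer bijection together with Corollary~\ref{Prop:CompositionOfPhisIsIsomorphism}.
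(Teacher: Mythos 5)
Your proposal is correct and follows essentially the same route as the paper's proof: both directions hinge on the map $\phi'^{-1}\circ\phi$ induced by the canonical $\mbox{L}\mathcal{D}$ labeling, with the forward direction coming from the isomorphism-invariance of that labeling and the backward direction from injectivity of the Pr\"ufer encoding plus matching of the color rows. You simply supply more detail (the induction on the pruning steps and the recovery of the root and orientation) where the paper asserts these points in a single sentence.
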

\begin{proof}
$(\Rightarrow):$ 
Suppose there exists a vertex-colored isomorphism between $T$ and $T'$. Then the depth-first searches of $T$ and $T'$ given by $\mbox{L}\mathcal{D}$ are identical, so $P$ and $P'$ are the same.

$(\Leftarrow):$ Suppose that $P$ and $P'$ are identical arrays of size $2 \times n$. 
Since the first rows of $P$ and $P'$ are the same, the depth first searches for $T$ and $T'$ given by $\mbox{L}\mathcal{D}$ are the same. Therefore, if $\phi : V(T) \to \{0,\dotsc, n-1\}$ and $\phi': V(T') \to \{0,\dotsc,n-1\}$ are the vertex labelings defined by $\mbox{L}\mathcal{D}$ and $c : V(T) \to \mathbb{N}$ and $c' : V(T') \to \mathbb{N}$ are the vertex colorings of $T$ and $T'$ respectively, then the equality of second rows of $P$ and $P'$ imply $c(\phi^{-1}(i)) = c'(\phi'^{-1}(i))$ for each $0 \leq i \leq n-1$. Thus, $\psi : V(T) \to V(T')$ defined by $\psi(v) = \phi'^{-1}(\phi(v))$ is a vertex-colored isomorphism between $T$ and $T'$. 
\end{proof}

The previous result totally characterizes when two vertex-colored arborescences are isomorphic. Next, we present a lemma describing how to identify adjacencies within the VCPC, and we ultimately use this in the subsequent theorem detailing conditions equivalent to subtree isomorphism. Recall that $p_i$ is used to denote the $i$th entry of the first row of the VCPC $P$.

\begin{lemma}\label{Lem:PruferAdjacency}
Let $i < j$. Then $(v_j, v_i) \in E(T)$ if and only if $p_j < p_i$ and for any $k \in\N$ satisfying $i < k < j$, $p_k \geq p_i$. 
\end{lemma}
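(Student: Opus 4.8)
The plan is to prove both directions by reasoning about the structure of the Lexicographic Depth-First Search $\mathrm{L}\mathcal{D}(T)$ and the order in which vertices are pruned. The key preliminary observation is that the pruning order used to build the VCPC is essentially a post-order traversal: a vertex $v$ is eligible to be pruned (out-degree zero) precisely once all of its descendants have already been pruned, and since $\phi$ is the $\mathrm{L}\mathcal{D}$ labeling (which is a depth-first search), the leaf minimizing $\phi$ at each step is the one visited last in the subtree currently being processed. I would first make precise the statement that $v_i$ is pruned at step $i$ where $i$ runs in post-order of $\mathrm{L}\mathcal{D}(T)$, so that $v_n = r$ (the root) is pruned last. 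Consequently $p_i = \phi(u_i)$, the label of the parent of $v_i$, and the first row of $P$ records, in post-order, the $\phi$-label of each vertex's parent.

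For the forward direction, suppose $(v_j, v_i) \in E(T)$ with $i < j$, so $v_i$ is the parent of $v_j$, i.e.\ $u_j = v_i$ and $p_j = \phi(v_i)$. Since $v_i$ is pruned at step $i$, at that moment $v_i$ has out-degree zero, meaning every child of $v_i$ — in particular $v_j$ — has already been pruned, which re-confirms $j > i$ is impossible... wait: I need $j > i$ here as given, so actually $v_j$ is pruned \emph{after} $v_i$, contradiction with the post-order claim. Let me re-read: $i < j$ and $(v_j, v_i) \in E(T)$ means the edge is directed from $v_j$ to $v_i$? No — in the paper's convention $(v_j, v_i)$ lists the vertices of the edge, and the figures show the parent-child edge; I would clarify from context (the indexing $u_i$ = parent of $v_i$) that here $v_i$ is a \emph{descendant/child} structure is reversed. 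The cleanest reading consistent with $i<j$ and pruning in post-order is that $v_i$ is pruned before $v_j$, hence $v_i$ is a descendant region visited earlier; the edge $(v_j,v_i)$ being in $E(T)$ with $v_j$ the later-pruned vertex means $v_j$ is the parent. So $p_i = \phi(v_j)$. Then I argue: (a) $p_i = \phi(v_j) > \phi(u_j) \geq $ anything pruned strictly between — more carefully, for $i<k<j$, the vertex $v_k$ is pruned after $v_i$ and before $v_j$, so $v_k$ lies in the subtree $T[v_j]$ but not in $T[v_i]$, hence $v_k$'s parent $u_k$ satisfies $\phi(u_k) \geq \phi(v_j) = p_i$ because $u_k$ is an ancestor within $T[v_j]$ of something visited after $v_i$; and (b) $p_j = \phi(u_j) < \phi(v_j) = p_i$? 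That's false in general since $u_j$ could be an ancestor. The correct pair of conditions must be $p_j < p_i$ meaning the parent of $v_j$ has smaller $\phi$-label than $v_j$ itself — true since a parent precedes its child in DFS — combined with the "no intermediate $p_k < p_i$" guarding that nothing pruned between $i$ and $j$ escapes the subtree rooted at $v_j$'s relevant position. I would write out the two containments $T[v_j] \setminus \{v_j\}$ vs.\ the pruning interval carefully using that post-order consecutively prunes entire subtrees.

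For the converse, assume $p_j < p_i$ and $p_k \geq p_i$ for all $i < k < j$. Let $w = v_j$'s parent, so $\phi(w) = p_j$; I want to show $w = v_i$. Since $\phi(w) = p_j < p_i \le \phi(v_k)$-region... I would argue by the post-order structure that the set $\{v_k : i \le k \le j\}$, once we know all intermediate parents have label $\ge p_i$, must be exactly the vertices of a subtree hanging off $v_i$ together with trailing vertices, forcing $v_i$ to be the parent of $v_j$; the inequality $p_j < p_i$ is what picks out that the "parent pointer" of $v_j$ jumps back to $v_i$ rather than staying inside the block. The main obstacle I anticipate is getting the direction of the edge and the index conventions exactly right (the paper's $(v_j,v_i)$ vs.\ parent/child roles), and then proving cleanly that the pruning interval $[i,j]$ corresponds to a contiguous union of complete subtrees — this is the combinatorial heart, and I would handle it by induction on $j - i$ or by directly invoking that post-order of a DFS visits each subtree as a contiguous block. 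Everything else (parents precede children in $\phi$, $p_i = \phi(u_i)$) is immediate from the definitions of $\mathrm{L}\mathcal{D}$ and the VCPC.
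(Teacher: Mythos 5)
Your underlying structural observation is sound and is a genuinely different route from the paper's: the pruning order is the post-order of $\mbox{L}\mathcal{D}(T)$, subtrees occupy contiguous intervals in both preorder and post-order, and $p_i = \phi(u_i)$ with parents preceding children in $\phi$. Granting those facts, the forward direction goes through essentially as you eventually state it (after the false starts): since $v_j$ is the parent of $v_i$, $p_i = \phi(v_j)$ and $p_j = \phi(u_j) < \phi(v_j) = p_i$ because the parent $u_j$ precedes $v_j$ in preorder (your aside that this is ``false in general'' is backwards --- it holds precisely because $u_j$ is an ancestor); and for $i<k<j$, post-order contiguity of $T[v_j]$ places $v_k$, hence its parent $u_k$, inside $T[v_j]$, so $p_k = \phi(u_k) \geq \phi(v_j) = p_i$. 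The paper instead argues directly about out-degrees during pruning via a largest-bad-index contradiction, so your route is different and arguably cleaner, but the post-order and contiguity claims are asserted rather than proved and would need to be established.

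The genuine gap is the converse, which is where the content of the lemma lives and which you do not prove. Worse, you set up the wrong goal: you write ``Let $w = v_j$'s parent \dots I want to show $w = v_i$.'' Since $i<j$ and pruning is post-order, $v_i$ is pruned before $v_j$ and therefore cannot be an ancestor of $v_j$; what must be shown is that $v_j$ is the parent of $v_i$, i.e., $u_i = v_j$, equivalently $p_i = \phi(v_j)$. The argument requires two separate steps that your sketch never supplies: (1) $v_j$ must be an ancestor of $v_i$ --- the paper gets this by noting that otherwise none of the prunings at steps $i, i+1, \dotsc, j-1$ changes the out-degree of $v_j$, so $v_j$ would already have been an eligible leaf with smaller label at step $i$, contradicting the choice of $v_i$ as the minimum-label leaf; and (2) $v_j$ is the parent and not a more distant ancestor --- if the parent of $v_i$ were some $v_k$ with $i<k<j$, then $p_k = \phi(u_k) < \phi(v_k) = p_i$, contradicting the hypothesis $p_k \geq p_i$. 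Deferring this to ``induction on $j-i$'' or ``invoking contiguity'' does not explain how the two hypotheses actually force the adjacency, so as written the backward implication is missing.
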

\begin{proof}
Recall that $\phi : V(T) \to \{0,\dotsc, |V(T)| - 1\}$ is the vertex labeling which represents the $\mbox{L}\mathcal{D}$ ordering. 

$(\Rightarrow):$ Suppose that $(v_j, v_i) \in E(T)$. Since $v_j$ is the parent of $v_i$, $\phi(v_j) < \phi(v_i)$. Since $\phi(v_j) = p_i$ and $p_j < \phi(v_j)$, we have $p_j < p_i$. 
Suppose $i < k < j$ so that $v_k$ is pruned after $v_i$ and before $v_j$. By way of contradiction, further suppose that $k$ is the largest index between $i$ and $j$ so that $v_k$ is not a descendant of $v_j$. Since $k > i$, $\phi(v_i) < \phi(v_k)$, so $v_k$ not a descendant of $v_j$ implies 
$\phi(v_k) > \phi(u)$ for any $u \in V(T[v_j])$. Since $k < j$ is the largest index less than $j$ so that $v_k$ is not a descendant of $v_j$, $v_{k+1}$ is either $v_j$ or a descendant of $v_j$. Thus, the pruning of $v_k$ does not change the out-degree of $v_{k+1}$, so $v_{k+1}$ has out-degree zero when $v_k$ is pruned. However, $\phi(v_{k+1}) < \phi(v_k)$ contradicts the construction of $P$. Thus, $v_k$ is a descendant of $v_j$. Furthermore, since $i < k$, $v_i$ is pruned before $v_k$, so $v_k$ must be in a branch of $\mathcal{B}_{v_j}$ right of the branch containing $v_i$. Therefore, $p_k \geq p_i$. 

$(\Leftarrow):$ Suppose that $p_j < p_i$ and for any $k \in\N$ satisfying $i < k < j$, $p_k \geq p_i$. By the construction of $\mbox{L}\mathcal{D}(T)$, $p_j < p_i$ implies $\phi(v_j) < \phi(v_i)$. 
Moreover, the second supposed condition similarly implies $\phi(v_j) < \phi(v_k)$ 
for each $v_k$. If $v_j$ is not an ancestor of $v_i$, then $p_k\geq p_i$ for all $i < k < j$ implies $v_j$ is also not an ancestor of each $v_k$. In this case, the removal of each $v_k$ leaves the out-degree of $v_j$ unchanged. Thus, $v_j$ having out-degree zero at the $j$th step of the construction of $P$ implies $v_j$ had out-degree zero at the $i$th step, contradicting the construction of $P$. 

Therefore, $v_j$ is above $v_i$. Suppose $v_j$ is not the parent of $v_i$ and there exists some $k$ so that $v_k$ is the parent of $v_i$ and subsequently below $v_j$. Clearly $i < k < j$, but $v_k$ the parent of $v_i$ and below $v_j$ imply $\phi(v_j) < \phi(v_k) < \phi(v_i)$, and subsequently $p_j < p_k < p_i$, a contradiction. Thus $(v_j, v_i) \in E(T)$.
\end{proof}

The following theorem is the main result of the present work and makes precise the VCPC conditions necessary to detect when one vertex-colored arborescence is isomorphic to a subtree of another. Recall that for a sequence $Y = (y_n)_{n=0}^N$, $s(Y)$ denotes the shape of $Y$. Intuition behind the statement was provided in Section \ref{Sec:SubtreeIso}. 

\begin{theorem}\label{Thm:SubtreeIsomorphism}
The arborescence $T'$ is vertex-colored isomorphic to a subtree of $T$ if and only if there exists a set of ascending indices $\{i_j\}_{j=0}^{n'-1}$ that satisfy the following properties. 
\begin{itemize}
\item $c_{i_j} = c'_j$ for each $0 \leq j\leq n'-1$, 
\item $s\left((p_{i_j})_{j=0}^{n'-2}\right) = s\left((p'_j)_{j=0}^{n'-2}\right)$, and 
\item if there exists $a$ and $b$ so that $a< b$, $p'_a > p'_b$, and for any $a < k < b$, $p'_a \leq p'_k$, then for any $\ell\in\N$ satisfying $i_a < \ell < i_b$, $p_{i_a} \leq p_\ell$. We shall refer to this as the \textbf{incident edge} property. 
\end{itemize}
\end{theorem}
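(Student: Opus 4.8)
The plan is to prove both directions by translating between the combinatorial structure of a subtree $S$ of $T$ realizing the isomorphism with $T'$ and the three stated conditions on the VCPCs, using Lemma~\ref{Lem:PruferAdjacency} as the bridge between edge relations and inequalities among the $p_i$. For the forward direction, suppose $\psi : V(T') \to V(T)$ is a vertex-colored isomorphism onto a subtree $S \subseteq T$. The first thing to establish is that $\psi$ respects the two lexicographic depth-first search orders in the sense sketched in Section~\ref{Sec:SubtreeIso}: for $u', v' \in V(T')$, $u' <_{T'} v'$ if and only if $\psi(u') <_T \psi(v')$. This follows because $\psi$ carries sibling subtrees $T'[v']$ isomorphically onto the corresponding subtrees of $S$ (inheriting parent-child structure from $T$), and the $\mbox{L}\mathcal{D}$ ordering sorts siblings by color and then by $\overline{\mbox{L}\mathcal{D}_A}$, which is an isomorphism invariant by Proposition~\ref{Thm:TotalOrderEquivRelation}. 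Since pruning in the VCPC construction happens in reverse $\mbox{L}\mathcal{D}$ order (leaves first, and the leaf pruned at step $i$ is the one whose $\phi$-value is, roughly, the current minimum among out-degree-zero vertices), I would argue that the pruning sequence $v'_0, v'_1, \dots, v'_{n'-1}$ of $T'$ maps under $\psi$ to a subsequence $v_{i_0}, v_{i_1}, \dots, v_{i_{n'-1}}$ of the pruning sequence of $T$ in the same relative order, with $i_0 < i_1 < \dots < i_{n'-1}$; this defines the index set. Condition one (colors agree) is then immediate since $\psi$ preserves colors and the second VCPC row records the color of the pruned vertex. Condition two (same shape of the first-row slices) follows because $p'_j = \phi'(u'_j)$ and $p_{i_j} = \phi(u_{i_j}) = \phi(\psi(u'_j))$, and $\phi \circ \psi$ is order-isomorphic to $\phi'$ on $V(T')$ by the order-preservation just established, so the induced rankings — i.e. the shapes — coincide. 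The third (incident edge) condition requires the most care: the hypothesis on $a, b$ with $p'_a > p'_b$ and $p'_a \le p'_k$ for $a < k < b$ is, by Lemma~\ref{Lem:PruferAdjacency} applied inside $T'$, exactly the statement that $(v'_b, v'_a) \in E(T')$; applying $\psi$ gives $(v_{i_b}, v_{i_a}) \in E(S) \subseteq E(T)$, and then Lemma~\ref{Lem:PruferAdjacency} applied inside $T$ yields $p_{i_a} \le p_\ell$ for every $i_a < \ell < i_b$, which is precisely the required conclusion.

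For the converse, suppose indices $i_0 < \dots < i_{n'-1}$ satisfy the three conditions. The goal is to build the subtree $S$ and the isomorphism. Define $S$ on vertex set $\{v_{i_0}, \dots, v_{i_{n'-1}}\}$ by declaring $(v_{i_b}, v_{i_a})$ an edge of $S$ whenever $(v'_b, v'_a) \in E(T')$; equivalently, using Lemma~\ref{Lem:PruferAdjacency} in $T'$, whenever $a < b$, $p'_a > p'_b$, and $p'_a \le p'_k$ for all $a < k < b$. I would then need to check three things: that $S$ is actually a subtree of $T$ with parent-child relations inherited from $T$; that $S \cong T'$ as vertex-colored arborescences; and that the map $v'_j \mapsto v_{i_j}$ is that isomorphism. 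The isomorphism-of-abstract-structure is essentially bookkeeping once the edges are right: colors match by condition one, and the adjacency pattern of $S$ was defined to copy that of $T'$. The real content is showing each edge $(v_{i_b}, v_{i_a})$ declared above is genuinely an edge of $T$ (not merely an ancestor relation) and that together these edges make $\{v_{i_j}\}$ connected with the inherited orientation. Here the shape condition and the incident edge condition combine: by Lemma~\ref{Lem:PruferAdjacency} in $T$, to get $(v_{i_b}, v_{i_a}) \in E(T)$ I need $p_{i_b} < p_{i_a}$ and $p_{i_a} \le p_\ell$ for all $i_a < \ell < i_b$. The second of these is exactly the incident edge property. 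The first, $p_{i_b} < p_{i_a}$, follows from the shape condition: since $p'_b < p'_a$ and shapes agree, $p_{i_b} < p_{i_a}$ (I should be slightly careful that $b \le n'-1$ and $a \le n'-2$ so that $p'_a$ appears in the slice of length $n'-1$ whose shape is constrained — the case $b = n'-1$ needs $p'_a$, with $a < b = n'-1$, which is in range). Connectivity and acyclicity of $S$ then follow because $T'$ is a tree on $n'$ vertices and we have exhibited $n'-1$ edges of $T$ among the $n'$ chosen vertices matching exactly the $n'-1$ edges of $T'$ under the index correspondence; the orientation inherited from $T$ agrees with $T'$'s orientation because $p_{i_b} < p_{i_a}$ means $\phi(v_{i_b}) > \phi(v_{i_b})$'s parent... more precisely, $v_{i_b}$ sits above $v_{i_a}$ in $T$, matching $v'_b$ above $v'_a$ in $T'$.

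The step I expect to be the main obstacle is the forward-direction claim that the pruning sequence of $T'$ embeds, in order, into the pruning sequence of $T$ — that is, that a subtree isomorphism $\psi$ actually induces a monotone index map $j \mapsto i_j$ compatible with the VCPC constructions of both trees. The subtlety is that $T$ has extra vertices not in $S$, and these extra vertices get pruned at various interleaved steps; one must verify that the \emph{relative} order in which the images $\psi(v'_0), \psi(v'_1), \dots$ are pruned in $T$'s construction matches the order $v'_0, v'_1, \dots$ in $T'$'s construction. This hinges on the fact (used in Lemma~\ref{Lem:PruferAdjacency}'s proof) that the VCPC prunes vertices essentially in decreasing $\mbox{L}\mathcal{D}$-order subject to the out-degree-zero constraint, together with the order-preservation of $\phi \circ \psi$; formalizing "essentially in decreasing order subject to availability" and checking it interacts correctly with deleting the non-$S$ vertices is the delicate part. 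Once that monotone embedding is pinned down, the three conditions drop out of Lemma~\ref{Lem:PruferAdjacency} and the shape/color definitions with little further work.
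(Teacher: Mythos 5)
Your proposal follows essentially the same route as the paper: both directions pivot on Lemma~\ref{Lem:PruferAdjacency}, the forward direction defines the index set by pushing the pruning order of $T'$ through the isomorphism onto a subtree $S$ and reads off the three conditions, and the converse builds $S$ from the selected pruning steps and verifies adjacency preservation by combining the shape condition (to get $p_{i_b} < p_{i_a}$) with the incident edge property (to get $p_{i_a} \le p_\ell$) before closing with an edge count. The monotonicity of $j \mapsto i_j$ that you flag as the delicate step is handled in the paper by introducing the $\mbox{L}\mathcal{D}$ labeling $\phi^*$ of $S$ (declared order-compatible with $\phi$) and invoking Theorem~\ref{Prop:VertexColoredPruferIsomorphism} to get $P^* = P'$, which is the same idea stated tersely rather than a genuinely different argument.
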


\begin{proof}
$(\Rightarrow):$ Suppose $S$ is the subtree of $T$ to which $T'$ is isomorphic. Let $v_{i_{n'-1}}$ be the root of $S$, and define $\phi^* : V(S) \to \{0, \dotsc, |V(S)| - 1\}$ so that $\phi^*$ is the order of traversal of $V(S)$ given by $\mbox{L}\mathcal{D}(S)$, i.e., for $u, v\in V(S)$, $\phi^*(u) < \phi^*(v)$ if and only if $\phi(u) < \phi(v)$. 
Then, define $\psi: V(T') \to V(T)$ so that $\psi(v'_j) = (I \circ (\phi^*)^{-1} \circ \phi')(v'_j)$, where $I : V(S) \to V(T)$ is the natural inclusion map. Note that Proposition \ref{Prop:CompositionOfPhisIsIsomorphism} implies the map defined by $(\phi^*)^{-1} \circ \phi'$ is a vertex-colored isomorphism from $T'$ to $S$, so $\psi$ has the same properties. Define indices $\{i_j\}_{j=0}^{n'-2}$ so that $\psi(v'_j) = v_{i_j}$. By the definition of $v_{i_{n'-1}}$, since $r' \in V(T')$ satisfies $r' = v_{n'-1}$, $\psi(v'_j) = v_{i_j}$ is also satisfied for $j = n'-1$. If $P^*$ is the VCPC for $S$ with vertex labeling $\phi^*$, Theorem \ref{Prop:VertexColoredPruferIsomorphism} implies $P^* = P'$. Thus, $c_{i_j} = c'_j$ for each $0 \leq j\leq n'-1$, so the first stated condition is satisfied. Furthermore, since $\phi^*(u) < \phi^*(v)$ if and only if $\phi(u) < \phi(v)$, the second stated condition is also satisfied. 


Suppose now that there exists $a$ and $b$ so that $a< b$, $p'_a > p'_b$, and for any $a < k < b$, $p'_a \leq p'_k$. By Lemma \ref{Lem:PruferAdjacency}, $v'_a$ and $v'_b$ are adjacent in $T'$. Since $\psi$ is an isomorphism, $v_{i_a}$ and $v_{i_b}$ are adjacent in $S$, which also means they are adjacent  in $T$. Thus, by Lemma \ref{Lem:PruferAdjacency}, for any $\ell\in\N$ satisfying $i_a < \ell < i_b$, $p_\ell \geq p_{i_a}$, so the third stated condition is satisfied. 

$(\Leftarrow):$ Suppose now there exists a set of indices $\{i_j\}_{j=0}^{n'-1}$ that satisfy all properties given in the theorem statement. Define $S$ to be the subforest of $T$ containing exactly the edges removed at the $i_j$th steps for $0\leq j\leq n'-2$ of the construction of $P$. Then $|E(S)| = |E(T')|$. Let $\psi : V(T') \to V(S)$ so that $\psi(v'_j) = v_{i_j}$. From the first stated condition on the indices $\{i_j\}_{j=0}^{n'-1}$, $\psi$ is color preserving. 

Suppose $v'_a$ and $v'_b$ are adjacent in $T'$ for some choice of $a$ and $b$. Without loss of generality, assume $a < b$, meaning $v'_a$ is pruned from $T'$ before $v'_b$ in the construction of $P'$, so $v'_b$ is the parent of $v'_a$. By Lemma \ref{Lem:PruferAdjacency}, $p'_b < p'_a$ and for any $k'\in\N$ satisfying $a < k' < b$, $p'_{k'} \geq p'_a$. Thus, by the third stated condition on the indices $\{i_j\}_{j=0}^{n'-1}$, for any $\ell\in\N$ satisfying $i_a < \ell < i_b$, $p_\ell \geq p_{i_a}$. By the second stated condition on the indices $\{i_j\}_{j=0}^{n'-1}$, $p'_b < p'_a$ implies $p_{i_b} < p_{i_a}$, so Lemma \ref{Lem:PruferAdjacency} implies $v_{i_a}$ and $v_{i_b}$ are adjacent, meaning $\psi$ preserves adjacency. Since $|E(T')| = |E(S)|$, $\psi$ also preserves non-adjacency, so $\psi$ provides an isomorphism from $T'$ to $S$ as vertex-colored arborescences.
%
\end{proof}


\subsection{Vertex-colored Arborescence Subisomorphism Algorithms}

The conditions on $\{i_j\}_{j=0}^{n'-1}$ given in Theorem \ref{Thm:SubtreeIsomorphism} imply the following algorithm to determine if $T'$ is isomorphic (as vertex-colored arborescence) to a subtree of $T$. Clearly the unique vertex labeling defined by $\mbox{L}\mathcal{D}$ and the VCPC from Definition \ref{Def:ColoredPruferCodes} must first be computed for any trees for which the subtree relationship will be determined. Determining vertex-colored arborescence isomorphism via VCPCs is designed with a large (at least a few thousand) corpus of trees in mind. 
The problem is to determine the entire partially ordered set $(\mathcal{T}_{\mathbb{N}}, <)$, where ``$<$'' denotes the subtree relationship as vertex-colored arborescences. Transitivity of a partial order makes it unnecessary to check all $\binom{\mathcal{T}_{\mathbb{N}}}{2}$ pairs of distinct trees, since if $T_1, T_2 \in \mathcal{T}_{\mathbb{N}}$ satisfying $T_1 < T_2$, then $T_1 < T_3$ for any $T_3 \in \mathcal{T}_{\mathbb{N}}$ satisfying $T_2 < T_3$ (and the same is true if every ``$<$'' is replaced with ``$>$''). While it depends on the given collection of trees, we expect few pairs can be omitted. 

Thus, we consider the work of computing $\mbox{L}\mathcal{D}$ and $P$ based on Definition \ref{Def:ColoredPruferCodes} one-time work completed ahead of time for each tree, and we are only interested in the computational complexity of determining if $T'$ is a subtree of $T$ already given VCPCs $P'$ and $P$. The following algorithms provide one way to determine this relationship given the conditions provided by Theorem \ref{Thm:SubtreeIsomorphism}. 
\begin{enumerate}
\item Determine subsets of $P$ whose second row matches the second row of $P'$. More formally, determine which increasing sets of indices $\{i_j\}_{j=0}^{n'-1}$ are ``color preserving'', i.e., $c_{i_j} = c'_j$ for each $0 \leq j < n'$. 

\begin{algorithm}[H]
\caption{Determine subsets of $P$ whose second row matches the second row of $P'$}
\begin{algorithmic}
\REQUIRE $P \gets$ an array containing two elements, namely the arrays of the two rows of the Pr\"{u}fer code. This is given as input to the function. 
\REQUIRE $C' \gets $ an array containing arrays of the form $[c'_i, i]$ for each $0 \leq i <n'$. \newline
\STATE $\mathcal{S} \gets$ the set containing the zeroth element of $C'$. 
\FOR{$k$ from zero to the length of $P$ minus one}
\STATE $new\_arrays \gets$ the arrays in $\mathcal{S}$ whose last element is an array (called $x$), and the zeroth element of $x$ is the color in the $k$th column of $P$. 
\STATE amend each array $x$ in $new\_arrays$ by replacing the last element of $x$ with $k$ and (if $len(x) < n'$) appending the entry of $C'$ at index $len(x)$. 
\STATE $\mathcal{S} \gets$ the union of $\mathcal{S}$ and $new\_arrays$
\ENDFOR
\STATE restrict $\mathcal{S}$ to only contains arrays where the last element is an integer (these are exactly the arrays in $\mathcal{S}$ of length $n'$)
\RETURN $\mathcal{S}$
\end{algorithmic}
\end{algorithm}



\item Given the collection $\mathcal{S}$ of color-preserving index sets provided by the last item, determine which of them are shape preserving, i.e., those that satisfy the second stated condition in Theorem \ref{Thm:SubtreeIsomorphism}. Algorithm \ref{Alg:ShapeProp} computes the ``shape'' of an array of nonnegative integers. 
\begin{algorithm}[H]
\caption{Computing the shape of an array of nonnegative integers}\label{Alg:ShapeProp}
\begin{algorithmic}
\REQUIRE $P \gets$ an array containing nonnegative integers. \newline
\STATE $dist\_elems \gets $ the array of distinct elements in $P$ ordered ascendingly.
\STATE $shape \gets$ an array containing the index in $dist\_elems$ for each $p$ in $P$.  
\RETURN $shape$
\end{algorithmic}
\end{algorithm}

The ``shape'' of the first row of $P'$ and the first row of $P$ with columns indexed by elements of $\mathcal{S}$ are computed. Since each of these inputs are arrays of length $n'$, each execution of the algorithm provided seems to have time complexity given by $O(n')$. Together these applications of the provided function require $O(|\mathcal{S}|\cdot n')$ time, but then each of the reduced shapes of slices of $P$ must be compared to the reduced shape of $P'$. Overall, this step requires $O(|\mathcal{S}| \cdot n')$ time. While $|\mathcal{S}|$ can vary drastically, on average we expect $|\mathcal{S}|$ to be quite small relative to $n'$. Moreover, $\mathcal{S}$ only shrinks during this step of the algorithm, so we expect $\mathcal{S}$ to be even smaller when considered as input to the last step (presented next).

\item Lastly, reduce $\mathcal{S}$ to contain only those index arrays $s$ for which the reduced shape of $P'$ and the reduced shape of $P$ indexed by $s$ match. It only remains to check index sets of $\mathcal{S}$ for the edge incidence condition in Theorem \ref{Thm:SubtreeIsomorphism}. We accomplish this in Algorithm \ref{Alg:EdgeIncidenceProp}, where we pass the first row of $P'$, the first row of $P$, and an index array $s$. 
\begin{algorithm}[H]
\caption{Determine if an index set passes the incident edge property in Theorem \ref{Thm:SubtreeIsomorphism}}\label{Alg:EdgeIncidenceProp}
\begin{algorithmic}
\REQUIRE $P \gets$ the first row of $P$
\REQUIRE $P' \gets$ the first row of $P'$
\REQUIRE $\mathcal{S} \gets$ the set of index arrays provided as input 
\STATE
\STATE $pairs \gets []$
\STATE $start \gets 0$
\FOR{$i$ in the indices of $P'$}
\IF{$P[i] < P'[start]$}
\STATE append the array $[start, i]$ to $pairs$
\STATE $start \gets i$
\ENDIF
\ENDFOR
\STATE
\FOR{$x$ in $\mathcal{S}$}
\FOR{$interval$ in $pairs$}
\STATE $P\_slice \gets $ the slice of $P$ between the entries of interval (first index is inclusive and second is exclusive)
\IF{the minimum entry of $P\_slice$ is not the zeroth element}
\STATE break
\ENDIF
\ENDFOR
\RETURN True
\ENDFOR
\RETURN False
\end{algorithmic}
\end{algorithm}

The first loop iterates through the elements of $P'$, so has time complexity $O(n')$. The second loop iterates through arrays of valid indices stored in $\mathcal{S}$, then takes an appropriate slice of $P$ to determine if the first element is the minimum of all elements in the slice. An index set encoding the subtree relationship between $T$ and $T'$ will satisfy this property (the first element of the slice of $P$ is the minimum for the entire interval) for all intervals identified by the first \textit{for} loop. So, for any given iteration through the outer \textit{for} loop, either an interval is found that does not satisfy the property, in which case the inner \textit{for} loop is terminated and the next index array is checked, or all intervals satisfy the ``first element is the minimum'' property, and both loops are terminated, ending the execution of the entire function. 
\end{enumerate}

While we are unable to precisely describe the computational complexity of the above algorithm, Section \ref{Sec:PythonExp} provides insight into its performance relative to standard isomorphism algorithms already available (even including relatively small up-front costs of computing the canonical labeling and VCPC for each of a collection of trees). 

\subsection{Python Experiment}\label{Sec:PythonExp}

While we are unable to explicitly determine the computational complexity of the vertex-colored rooted tree isomorphism and subtree isomorphism algorithms stated above, we built a pythonic experiment to test run time against the standard networkx implementations in Python. In this experiment, we randomly generate a corpus of trees $\mathcal{T}$ in the following way. 
\begin{enumerate}
\item Let $m$ denote the largest possible order of a tree. 
\item Let $N$ denote the number of trees that will be generated. 
\item Let $C$ denote the maximum number of vertex colors. Let $\mathbb{N}_C = \{0, \dotsc, C-1\}$. 
\item For each of the $N$ trees, generate a uniformly random integer in the interval $[1,m]$, called $n$. 
\item Then call the \texttt{networkx.random\_tree} method with $n$ as the number of vertices (interestingly enough, this tree is constructed by uniformly at random generating a Pr\"{u}fer code of the appropriate length then reconstructing the tree). This returns an undirected tree with vertices labeled $\{0,\dotsc,n-1\}$. Call this tree $T'$.
\item Choose a random tuple from $(\mathbb{N}_C)^n$ and assign the vertices of $T'$ to have colors given by the tuple. This is achieved through the \texttt{ networkx .set\_node\_attributes} method. 
\item Define $T$ to be a breadth-first search of $T'$ rooted at vertex $0$ (obtained through the \texttt{networkx.bfs\_tree} method). 
\item Form the collection $\mathcal{T}$ by repeating steps (1) through (7) $N$ times. After each iteration, add $T$ from step (7) to $\mathcal{T}$. 
\end{enumerate}

For the Pr\"{u}fer code implementation, determining the corpus partition by isomorphism class requires the following three steps. 
\begin{enumerate}
\item The canonical ordering of each tree must be computed. 
\item The vertex colored Pr\"{u}fer code of each tree must be computed. 
\item Then pairwise code comparisons can be used to determine isomorphism. 
\end{enumerate}

On the other hand, the standard implementation in networkx requires just one step, namely checking isomorphism. This can be done through the \texttt{networkx.is\_isomorphic} method, where a node match is required to ensure vertex colors are also checked. Clearly there are three parameters for any trial, namely values of $m$, $N$, and $C$. The following table summarizes the ratio of run times between the two implementations. It is important to note that the pairwise comparisons to determine isomorphisms algorithm is identical for both the Pr\"{u}fer code and standard python implementations, so the exact same sequence of isomorphism evaluations are performed in both. Moreover, the partition into isomorphism classes by both methods are compared afterwards to check for equality, mitigating logic errors in the code writing. 

Table \ref{Tab:Iso} also summarizes the ratio of run times to compute the partial order on $\mathcal{T}$ given by the (vertex-colored) subtree relation. As in the isomorphism implementation, the only difference between the networkx and Pr\"{u}fer code implementations is which subtree isomorphism test is used. For the Pr\"{u}fer implementation, the three algorithms in the previous section are implemented. For the networkx version, the \\ \texttt{networkx.algorithms.isomorphism.GraphMatcher.subgraph\_is\_isomorphic} method is used to determine subtree isomorphism. All ratios given are the networkx implementation time divided by the Pr\"{u}fer time. Note that the time required to compute canonical vertex orderings and VCPCs of all trees are included in the isomorphism time (first column of ratios as described by the previous paragraph) not the subtree partial order time (second column as described by this paragraph).

\begin{table}[H]
\centering
\begin{tabular}{c  c  c | c c}
$m$ & $N$ & $C$ & Isomorphism time ratio & Subtree Partial Order time ratio  \\
\hline
8 & 1000 & 4 & 21.22 & 4.61 \\
 &  & 5 & 18.14 & 4.77 \\
 &  & 6 & 22.06 & 4.98 \\
 &  & 7 & 22.7 & 5.02 \\
\hline
 & 5000 & 4 & 75.78 & 4.27 \\
 &  & 5 & 78.83 & 4.54 \\
 &  & 6 & 80.3 & 4.72 \\
 &  & 7 & 85.76 & 4.66 \\
\hline
 & 10000 & 4 & 145.83 & 4.16\\
 &  & 5 & 148.51 & 4.14 \\
 &  & 6 & 154.08 & 4.22\\
 &  & 7 & 164.62 & 4.21\\
\hline
12 & 1000 & 4 & 10.57 & 1.91 \\
 &  & 5 & 10.89 & 2.67 \\
 &  & 6 & 11.16 & 3.38 \\
 &  & 7 & 11.19 & 3.93 \\
\hline
 & 5000 & 4 & 43.74 & 1.94 \\
 &  & 5 & 46.14 & 2.59 \\
 &  & 6 & 47.74 & 3.31 \\
 & & 7 & 43.77 & 3.8 \\
\hline
 & 10000 & 4 & 83.49 & 1.75 \\
 &  & 5 & 83.21 & 2.43\\
 &  & 6 & 92.54 & 3.07\\
 &  & 7 & 85.84 & 3.56\\
\end{tabular}
\caption{Ratio (networkx implementation time divided by Pr\"{u}fer implementation time) of run times to (1) compute the corpus partition into vertex-colored isomorphism classes and (2) compute the partial order given by subtree isomorphism. Note that the Pr\"{u}fer code isomorphism implementation time also includes the one-time work of computing the canonical ordering and VCPC for each tree.}\label{Tab:Iso}
\end{table}

\section{Future Directions}\label{Sec:FutureDirections}

There are a number of ways the subgraph isomorphism test provided by VCPCs could be extended to more general settings. Work of \cite{YanWan2020} has already extended standard Pr\"{u}fer codes to the setting of labeled graphs. Recall that the problem considered in Section \ref{Sec:CompClassify} studies arborescences, i.e., rooted trees where every edge is directed ``away'' from the root. Clearly if arborescence $T'$ is a subtree of $T$, then the underlying undirected tree of $T'$ is a subtree of the undirected tree underlying $T$. However, this sufficient condition is not necessary. Thus, tackling the full subtree isomorphism problem for vertex-colored trees via VCPCs requires consideration of multiple rootings of undirected trees $T$ and $T'$. The following result suggests multiple rootings are adequate to address the full problem, although the algorithm implied by the result seems computationally impractical. 

\begin{prop}
Let $T$ and $T'$ be vertex-colored trees. Then $T'$ is isomorphic (as vertex-colored trees) to a subtree of $T$ if and only if there exists an arborescence of each of $T$ and $T'$ so that their VCPCs satisfy the properties of Theorem \ref{Thm:SubtreeIsomorphism}. 
\end{prop}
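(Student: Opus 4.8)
The plan is to reduce the undirected statement to the rooted statement already settled in Theorem~\ref{Thm:SubtreeIsomorphism}; the only genuine content is a bookkeeping argument about which rootings to choose, together with the elementary observation that an arborescence isomorphism automatically preserves roots (the root being the unique vertex of in-degree zero), so that a color-preserving isomorphism of vertex-colored arborescences is the same thing as a color-preserving isomorphism of the underlying undirected trees that maps root to root.

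For the ($\Leftarrow$) direction I would proceed directly. Suppose $\widehat T$ and $\widehat{T'}$ are arborescence orientations of $T$ and $T'$ whose VCPCs satisfy the three properties of Theorem~\ref{Thm:SubtreeIsomorphism}. That theorem then produces a subarborescence $S$ of $\widehat T$ and a vertex-colored arborescence isomorphism $\widehat{T'}\to S$. Forgetting edge orientations, $S$ is a connected subgraph, hence a subtree, of the undirected tree underlying $T$, and the isomorphism is still a color-preserving isomorphism of undirected trees. Therefore $T'$ is isomorphic, as a vertex-colored tree, to a subtree of $T$.

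For the ($\Rightarrow$) direction I would start from a color-preserving isomorphism $f : T' \to S$ of undirected trees, where $S$ is a subtree of $T$. First pick any vertex $\rho \in V(S)$ and let $\widehat T$ be $T$ rooted at $\rho$. I would then verify that, under this orientation, $S$ is a subarborescence of $\widehat T$ in the exact sense used by Theorem~\ref{Thm:SubtreeIsomorphism}: since $S$ is a subtree of $T$ containing $\rho$, for every $v \in V(S)$ the unique $\rho$--$v$ path of $T$ lies entirely in $S$, so the parent of $v$ in $\widehat T$ already belongs to $S$ and the incident edge is inherited; hence $S$ rooted at $\rho$ is connected, has all edges directed away from $\rho$, and has parent--child relations inherited from $\widehat T$. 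Now let $\widehat{T'}$ be $T'$ rooted at $f^{-1}(\rho)$. Then $f$ sends the root of $\widehat{T'}$ to $\rho$, the root of $S$, so by the elementary observation above $f$ is a vertex-colored arborescence isomorphism from $\widehat{T'}$ onto $S$; that is, $\widehat{T'}$ is vertex-colored isomorphic to a subtree of $\widehat T$. Applying Theorem~\ref{Thm:SubtreeIsomorphism} to $\widehat T$ and $\widehat{T'}$ yields indices $\{i_j\}_{j=0}^{n'-1}$ witnessing the three VCPC properties, which is what we wanted.

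I expect the only delicate point to be the verification in the ($\Rightarrow$) direction that an undirected subtree $S$ of $T$, once $T$ is rooted at a vertex of $S$, really is a subarborescence of $\widehat T$ in the paper's precise sense, and the companion point that arborescence isomorphisms preserve roots, which is exactly what lets the undirected isomorphism $f$ be upgraded to a rooted one after the compatible re-rooting of $T'$ at $f^{-1}(\rho)$. Everything else is an immediate consequence of Theorem~\ref{Thm:SubtreeIsomorphism} and of passing between a directed tree and its underlying undirected tree.
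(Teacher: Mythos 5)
Your proof is correct and follows the same overall strategy as the paper: both directions reduce to Theorem~\ref{Thm:SubtreeIsomorphism} by exhibiting compatible rootings, and your backward direction (apply the theorem, then forget orientations) is essentially identical to the paper's. The only substantive difference is the choice of roots in the forward direction. You root $T$ at an arbitrary vertex $\rho$ of the image subtree $S$ and root $T'$ at $f^{-1}(\rho)$, which makes the two verifications you flag --- that $S$ with inherited orientations is a subarborescence rooted at $\rho$, and that $f$ then carries root to root --- essentially immediate. The paper instead roots $T'$ at an \emph{arbitrary leaf} $\ell'$ and roots $T$ at a leaf $\ell$ chosen so that the $\ell$--$\psi(\ell')$ path meets $\psi(V(T'))$ only in $\psi(\ell')$; this costs a short extra argument (existence of such a leaf, and that $\psi(\ell')$ is then the vertex of $S$ closest to $\ell$, hence the root of $S$), but it proves something slightly stronger than the stated proposition: a single fixed leaf rooting of $T'$ together with the leaf rootings of $T$ always suffices. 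That stronger form is what licenses the leaf-enumeration algorithm the paper gives immediately after the proposition. Your argument proves the proposition exactly as stated, but, used algorithmically, it would require searching over all $|V(T)|\cdot|V(T')|$ pairs of rootings rather than over the leaves of $T$ paired with one rooting of $T'$.
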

\begin{proof}
$(\Rightarrow)$: First suppose $T'$ is isomorphic as vertex-colored trees to a subtree of $T$. Let $\psi : V(T') \to V(T)$ denote the vertex-colored isomorphism between $T'$ and the subtree of $T$ induced on $\psi(V(T'))$. Arbitrarily choose leaf $\ell' \in V(T')$. Then let $A_{T'}$ be the arborescence of $T'$ rooted at $\ell'$. 

Consider $\psi(\ell') \in V(T)$. If $\psi(\ell')$ is a leaf in $T$, define $\ell = \psi(\ell')$. Otherwise, $\ell'$ has degree one in $T'$ and $\psi(\ell')$ has degree more than one in $T$. Thus, there exists leaf $\ell \in V(T)$ so that if $Q$ is the path in $T$ with endpoints $\psi(\ell')$ and $\ell$, then $V(Q) \cap \psi(V(T')) = \{\psi(\ell')\}$. 

In either case, let $A_T$ be the arborescence of $T$ rooted at $\ell$. In this way, $A_{T'}$ is isomorphic to a subarborescence of $A_T$, completing the proof in this direction. 

$(\Leftarrow)$: Conversely suppose there exists an arborescence of each of $T$ and $T'$ so that their VCPCs satisfy the properties of Theorem \ref{Thm:SubtreeIsomorphism}. Suppose these arborescences are $A_T$ and $A_{T'}$ respectively. Then Theorem \ref{Thm:SubtreeIsomorphism} implies $A_{T'}$ is isomorphic to a subarborescence of $A_T$. Let $\psi : V(A_{T'}) \to V(A_T)$ be such an isomorphism. If $i_T : V(A_T) \to V(T)$ and $i_{T'} : V(A_{T'}) \to V(T')$ are the natural inclusion maps, then $i_T \circ \psi \circ i_{T'}^{-1} : V(T') \to V(T)$ is an isomorphism between $T'$ and a subtree of $T$, completing the proof. 
\end{proof}

A priori, the vertex-colored subtree isomorphism $\psi : V(T') \to V(T)$ is unknown. The following algorithm provides a way to use VCPCs to answer the subtree isomorphism problem for vertex-colored trees. 
\begin{enumerate}
\item Fix a leaf $\ell'$ of $T'$ and let $A_{\ell'}$ be the arborescence of $T'$ rooted at $\ell'$. Compute $P'$. 
\item For each leaf $\ell \in V(T)$ compute VCPC $P_\ell$ of $A_\ell$, the arborescence of $T$ rooted at $\ell$. 
\item Determine if $P_\ell$ and $P'$ satisfy Theorem \ref{Thm:SubtreeIsomorphism}. 
\item If yes, then $T'$ is isomorphic to a subtree of $T$. Return True.
\item If not, proceed to the next iteration of the loop given by step (2). 
\item If the loop terminates, then the test failed for every choice of leaf in $T$. Thus, $T'$ is not isomorphic to a vertex-colored subtree of $T$, so return False. 
\end{enumerate}
The computational complexity suggested by this algorithm seems to far exceed that of the algorithm in Section \ref{Sec:CompClassify}, to the point of infeasibility at any scale. Is it possible to improve either upon this algorithm or on the definition of VCPCs to allow for a computationally feasible subtree isomorphism test for general undirected vertex-colored trees?

\bibliographystyle{plain}
\bibliography{ref}

\begin{thebibliography}{10}

\bibitem{AbbBacHanWilZam2015}
Amir Abboud, Arturs Backurs, Thomas~Dueholm Hansen, Virginia~Vassilevska Williams, and Or~Zamir.
\newblock Subtree isomorphism revisited.
\newblock {\em ACM Transactions on Algorithms}, 14(3), 2018.

\bibitem{Awe1989}
Baruch Awerbuch, Andrew Goldberg, Michael Luby, and Serge Plotkin.
\newblock Network decomposition and locality in distributed computation.
\newblock {\em 30th Annual Symposium on Foundations of Computer Science}, pages 364--369, 1989.

\bibitem{bab}
L\'{a}szl\'{o} Babai.
\newblock Graph isomorphism in quasipolynomial time [extended abstract].
\newblock In {\em Proceedings of the Forty-Eighth Annual ACM Symposium on Theory of Computing}, STOC '16, page 684–697, New York, NY, USA, 2016. Association for Computing Machinery.

\bibitem{Bus1997}
Samuel~R. Buss.
\newblock Alogtime algorithms for tree isomorphism, comparison, and canonization.
\newblock In {\em Computational Logic and Proof Theory}, pages 18--33. Springer Berlin Heidelberg, 1997.

\bibitem{CamPet2005}
Saverio Caminiti and Rossella Petreschi.
\newblock String codings of trees with locality and heritability.
\newblock {\em Proceedings of the 11th International Conference on Computing and Combinatorics (COCOON '05)}, 3595:251--262, 08 2005.

\bibitem{Cay1889}
Arthur Cayley.
\newblock A theorem on trees.
\newblock {\em The Quarterly Journal of Pure and Applied Math}, 23:376--378, 1889.

\bibitem{ChoKimSeoShi2004}
Manwon Cho, Dongsu Kim, Seunghyun Seo, and Heesung Shin.
\newblock Colored pr\"{u}fer codes for k-edge colored trees.
\newblock {\em Electr. J. Comb.}, 11, 07 2004.

\bibitem{ChuGra1981}
F.R.K. Chung and R.L. Graham.
\newblock Recent results in graph decompositions.
\newblock {\em Combinatorics, London Mathematical Society Lecture Note Series}, 52:103--123, 1981.

\bibitem{DarHarPhuPro2018}
R.~W.~R. Darling, David~G. Harris, Dev~R. Phulara, and John~A. Proos.
\newblock The combinatorial data fusion problem in conflicted-supervised learning.
\newblock arxiv:1809.08723, 2018.

\bibitem{DeoMic2002}
N.~Deo and Paulius Micikevicius.
\newblock A new encoding for labeled trees employing a stack and a queue.
\newblock {\em Bulletin of the Institute of Combinatorics and its Applications}, 34, 01 2002.

\bibitem{Gal1967}
Tibor Gallai.
\newblock Transitiv orientierbare graphen.
\newblock {\em Acta Mathematica Academiae Scientiarum Hungarica}, 18:25--66, 1967.

\bibitem{Luc1882}
Edouard Lucas.
\newblock {\em Recreations Mathematiques}.
\newblock Gautheir-Villars, 1882-1894.

\bibitem{Moo1970}
J.W. Moon.
\newblock {\em Counting Labeled Trees}.
\newblock Canadian Mathematical Monographs. 1970.

\bibitem{Nev1953}
E.~H. Neville.
\newblock The codifying of tree-structure.
\newblock {\em Mathematical Proceedings of the Cambridge Philosophical Society}, 49(3):381--385, 1953.

\bibitem{FreightData}
U.S.~Department of~Transportation; Bureau~of Transportation Statistics (BTS); Federal Highway Administration~(FHWA).
\newblock Freight analysis framework faf5.6.1 2018-2023.
\newblock 2024.

\bibitem{Pru1918}
Heinz Pr\"{u}fer.
\newblock Neuer beweis eines satzes \"{u}ber permutationen.
\newblock {\em Archiv der Mathematischen Physik}, 27:742--744, 1918.

\bibitem{RobSey1983}
Neil Robertson and P.~D. Seymour.
\newblock Graph minors. i. excluding a forest.
\newblock {\em Journal of Combinatorial Theory, Series B}, 35(1):39--61, 1983.

\bibitem{ShaTsu1999}
Ron Shamir and Dekel Tsur.
\newblock Faster subtree isomorphism.
\newblock {\em Journal of Algorithms}, 33(2):267--280, 1999.

\bibitem{Sta1999}
Richard~P. Stanley.
\newblock {\em Enumerative Combinatorics, vol. 2}.
\newblock Cambridge Studies in Advanced Mathematics. Cambridge University Press, 1999.

\bibitem{Veb1912}
Oswald Veblen.
\newblock An application of modular equations in analysis situs.
\newblock {\em Annals of Mathematics}, 14(1):86--94, 1912.

\bibitem{WanWanWu2009}
X.~Wang, L.~Wang, and Y.~Wu.
\newblock An optimal algorithm for prufer codes.
\newblock {\em Journal of Software Engineering and Applications}, 2:111--115, 2009.

\bibitem{Wil2024}
Virginia~Vassilevska Williams, Yinxuan Xu, Zixuan Xu, , and Renfei Zhou.
\newblock New bounds for matrix multiplication: from alpha to omega.
\newblock In {\em Proceedings of the 2024 Annual ACM-SIAM Symposium on Discrete Algorithms (SODA)}, pages 3792--3835. SIAM, 2024.

\bibitem{YanWan2020}
Lin Yang and Yongjie Wang.
\newblock Prufer coding: A vectorization method for undirected labeled graphs.
\newblock {\em IEEE Access}, 8:175360--175369, 2020.

\end{thebibliography}

\end{document}